\documentclass[pdflatex,sn-mathphys-num]{sn-jnl}

\usepackage{graphicx}%
\usepackage{subcaption}
\usepackage{multirow}%
\usepackage{amsmath,amssymb,amsfonts}%
\usepackage{amsthm}%
\usepackage[title]{appendix}%
\usepackage{xcolor}%
\usepackage{textcomp}%
\usepackage{manyfoot}%
\usepackage{booktabs}%
\usepackage{algorithm}%
\usepackage{algorithmicx}%
\usepackage{algpseudocode}%
\usepackage{listings}%
\usepackage{mathdots}
\usepackage{natbib}

\theoremstyle{thmstyleone}%
\newtheorem{theorem}{Theorem}
\newtheorem{proposition}[theorem]{Proposition}%
\newtheorem{corollary}[theorem]{Corollary}
\newtheorem{problem}{Problem}
\newtheorem{example}{Example}%
\newtheorem{remark}[theorem]{Remark}%
\newtheorem{lemma}[theorem]{Lemma}
\theoremstyle{thmstylethree}%

\begin{document}

\title[Article Title]{Operator Inequalities and Several Characterizations of the $\lambda$-Mean Transform}


\author[1]{\fnm{Bikram} \sur{Das}}\email{dasbikram642@gmail.com}
\author[2]{\fnm{Goutam} \sur{Biswas}}\email{biswasgoutam047@gmail.com}

\author*[3]{\fnm{Chandal} \sur{ Nahak}}\email{cnahak@maths.iitkgp.ac.in}
\equalcont{The author contributed equally to this work.}

\affil[1,2,3]{\orgdiv{Department of Mathematics}, \orgname{Indian Institute of Technology Kharagpur}, \orgaddress{\city{Kharagpur}, \postcode{721302}, \state{West Bengal}, \country{India}}}


\abstract{
We broaden Buzano-type inequalities to provide novel numerical radius bounds for operators of the type $AXB$, thereby generalizing the results obtained by Sababheh et al. For the $\lambda$-mean transform $M_\lambda(T)$, we provide a counterexample demonstrating that $r_\sigma(M_\lambda(T)) \le r_\sigma(T)$ fails to hold in general for $\lambda \in (0, 1)$, establish that $(r_\omega(M_\lambda(T)))^n$ and $r_\omega(T^n)$ are typically incomparable for $n \ge 2$, and confirm that $M_\lambda(T^*) = (M_\lambda(T))^*$ is valid for $\lambda \in [0, 1)$ if and only if $T$ is a member of a newly established $\sigma$-class. Furthermore, we examine the transformation characteristics of $T$ and the tensor products $T \otimes S$, refine Zamani’s inequalities, and unify operator modulus bounds $\vert{}\widetilde{T}\vert{} \le \vert{}\widehat{T}\vert{} \le \vert{}T\vert{}$. In this application, we demonstrate that the conditions for norm preservation, $\Vert{}\widetilde{T}\Vert{} = \Vert{}T\Vert{}$ and $\Vert{}M_\lambda(T)\Vert{} = \Vert{}T\Vert{}$, are equivalent to the statement $\Vert{}T^2\Vert{} = \Vert{}T\Vert{}^2$, and we offer precise norm estimates for $2 \times 2$ off-diagonal block operator matrices under $\lambda$-mean transformation.
}

\keywords{Numerical radius; Operator norm inequalities; $\lambda$–mean transform; Aluthge transform; Operator inequalities; Hilbert space operators.}


\pacs[MSC Classification]{ 47A12, 47A30, 47A63, 47B20}

\maketitle

\section{Introduction}
A fundamental component of contemporary functional analysis is the theory of bounded linear operators on Hilbert spaces. The operator norm and the numerical radius are among the fundamental tools in this framework. These concepts are not only essential in operator theory, but they also have a significant impact on a variety of fields, including mathematical physics, control theory, quantum information theory, and numerical linear algebra.  Furthermore, they encode certain geometric and spectral features of linear transformations, which are of great importance.

Let $\mathcal{H}$ be a complex Hilbert space equipped with an inner product $\langle \cdot , \cdot \rangle$ and the corresponding norm defined by $\|x\| = \sqrt{\langle x,x \rangle}$. 
A unit vector in $\mathcal{H}$ is an element $x\in \mathcal{H}$ with $\|x\|=1$. 
$\mathbb{B}(\mathcal{H})$ represents the $C^*$algebra of all bounded linear operators acting on $\mathcal{H}$.

For an operator $S \in \mathbb{B}(\mathcal{H})$, the operator norm is defined by
$$
\|S\| 
= \sup_{\|x\|=1} \|Sx\|
= \sup_{\|x\|=\|y\|=1} |\langle Sx , y \rangle|.
$$

The numerical range of $S$ is represented by $W(S)$, which is the subset of the  complex plane  provided by $$ W(S) = \left\{ \langle Sx, x \rangle: x \in \mathcal{H}, \ \|x\| = 1 \right\}. $$
The corresponding numerical radius is given by
$$
r_{\omega}(S) 
= \sup_{\|x\|=1} |\langle Sx , x \rangle|.
$$

The spectral radius $S$ is defined by
$$
r_{\sigma}(S) 
= \sup_{\lambda \in \sigma(S)} |\lambda|,
$$ 
where the spectrum of $S$ is
$$
\sigma(S) 
= \left\{ \lambda \in \mathbb{C} : S - \lambda I \text{ is not invertible in } 
\mathbb{B}(\mathcal{H}) \right\}.
$$

An operator $S \in \mathbb{B}(\mathcal{H})$ is called positive, denoted by $S \geq 0$, if it is self-adjoint ($S = S^*$) and satisfies
$$
\langle Sx, x \rangle \geq 0 
\quad \text{for every } x \in \mathcal{H}.
$$
We denote by $\mathbb{B}(\mathcal{H})^{+}$ the set of all positive operators on $\mathcal{H}$.

The absolute value (or modulus) of $S$ and its adjoint $S^*$ in $\mathbb{B}(\mathcal{H})$ are defined, respectively, by  $$ |S| = (S^*S)^{1/2}, \qquad |S^*| = (SS^*)^{1/2},$$  where $(\cdot)^{1/2}$ signifies the unique positive square root of a positive operator. 

The numerical radius provides a norm on $\mathbb{B} (\mathcal{H})$, and it adheres to the inequality: 
\begin{equation}\label{(1)} \frac{\|S\|}{2}\le{r_{\omega}(S)}\le{\|S\|}\tag{1} . \end{equation}

If $S^2=0,$ then $r_{\omega}(S)=\frac{1}{2}\|S\|$ and if $S$ is self-adjoint, then   $r_{\omega}(S)=\|S\|.$

The power inequality is a significant attribute of the numerical radius, signifying that $r_{\omega}(S^{n}) \le (r_{\omega}(S))^n,$ for all $n \in\mathbb{N}.$

The reverse power inequality, $r_{\omega}(S^{n}) \ge (r_{\omega}(S))^n$, generally does not hold.
 For a matrix
 $S=$
 $\begin{bmatrix}
 0 & 0\\ 
 1 & 0
 \end{bmatrix}$, we have $r_{\omega}(S)=\frac {1}{2}$ but $r_{\omega}(S^{n})=0, \ \forall n\ge 2.$

Many researchers have determined a range of upper and lower bounds to improve the inequality (\ref{(1)}) over the years, see e.g. [\citenum{kittaneh2003numerical,kittaneh2005numerical,[abu2015upper],el2007numerical,dragomir2008power,bhunia2021furtherance} ].

The Cauchy–Schwarz inequality is one of the most fundamental and extensively used inequalities in functional analysis, operator theory, and inner product spaces. It asserts that 
\begin{equation}\label{(2)}
|\langle x,y\rangle| \le \|x\|\,\|y\|, 
\qquad x,y \in \mathcal{H}. 
\tag{2}
\end{equation}
Due to its key importance in mathematical physics, quantum mechanics and numerical analysis, finding sharp refinements and generalizations of (\ref{(2)}) has been the topic of considerable research for decades.

Buzano [\citenum{buzano1974generalizzazione}] presented a refinement of (\ref{(2)}), demonstrating that \begin{equation}\label{(3)}
|\langle x,e\rangle \langle e,y\rangle|
\le \frac{1}{2}\big( \|x\|\|y\| + |\langle x,y\rangle| \big), 
\tag{3}
\end{equation}
for all $x,y,e \in \mathcal{H}$ with $\|e\|=1$.

For an arbitrary vector $z\in \mathcal{H}$, Buzano's inequality (\ref{(3)}) can be generalized in a natural way as follows:
\begin{equation}\label{4}
|\langle x,z\rangle \langle z,y\rangle|
\le \frac{\|z\|^2}{2}\big( \|x\|\|y\| + |\langle x,y\rangle| \big). 
\tag{4}
\end{equation}

In recent years, the attention has been paid to vector-norm inequalities in their bounded linear operator equivalents in the algebra $\mathcal{B}(\mathcal{H})$. This gap was filled by Dragomir [\citenum{dragomir2017buzano}, Corollary 1] who proved a notable operator-based analogue closely connected to the Buzano's inequality (\ref{(3)}):
\begin{equation}\label{5}
|\langle Sx, Sy\rangle|
\le \frac{\|S\|^2}{2}\big( |\langle x,y\rangle| + \|x\|\|y\| \big), 
\qquad S \in \mathcal{B}(\mathcal{H}). 
\tag{5}
\end{equation}

Sababheh et al.  [\citenum{sababheh2025inner}, Lemma 2.1] extended these constraints to multi-operator settings, building on the framework presented by Dragomir. Especially, they proved a new estimate using a positive operator $X$ and arbitrary operators $A, B\in \mathcal{B}(\mathcal{H})$ by means of (\ref{5}):

\begin{equation}\label{6}
\left| \langle AXB x , y \rangle \right|
\le 
\frac{\|X\|}{2}
\left(
|\langle ABx , y \rangle|
+
\|Bx\|\,\|A^{*}y\|
\right)
\tag{6}
\end{equation}
Using these operator-space variations, Sababheh et al. [\citenum{sababheh2025inner}, Theorem 2.4] derived new upper bounds for the numerical radius $r_{\omega}(ST)$ of product operators, demonstrating that: 
\begin{equation}\label{7}
r_{\omega}(ST)
\le
\frac{\|S\|^{1/2}}{2}
\|\,|S^*|+|T|^2\,\|
\tag{7}
\end{equation}
and
\begin{equation}\label{8}
r_{\omega}(ST)
\le
\frac{\|S\|^{1/2}}{2}
\left[
\frac12\|\,|S^*|+|T|^{2}\,\|
+\|S\|^{1/2}\;\|T\|
\right].
\tag{8}
\end{equation}

Our main goal with this work is to get rid of static norm inequalities by adding a free complex parameter $\lambda \in \mathbb{C}$ to the basic estimates of operator configurations with the form $AXB$, where $X$ is a positive operator.
Based on our generalized estimates for the inner product, we give a unified framework for the estimation of the numerical radius $r_{\omega}(AXB)$.

We now proceed to our subsequent motivation, namely, the investigation of the $\lambda-$mean transformation of a Hilbert space operator.
To provide context, we quickly review a number of popular transforms related to bounded linear operators on a Hilbert space.  
Let $T$ be an element of $\mathcal{B}(\mathcal{H})$, and denote its canonical polar decomposition as
$T = U|T|,$ where $U$ is a partial isometry and $|T| = (T^*T)^{1/2}$.

Among the most extensively researched transforms is the Aluthge transform $\widetilde{T},$ defined by
$\widetilde{T} = |T|^{1/2} U |T|^{1/2}.$
Another classical transform is the Duggal transform $T^{D},$ given  by
$T^{D} = |T| U.$
The mean transform $\widehat{T}$ of $T$ is defined by
$\widehat{T} = \frac12 (T + T^{D}).$
Extending these constructions yields the generalized mean transform $\widehat{T}(t)$, defined for each $t \in [0,1]$ by 
$$\widehat{T}(t)
=
\frac{1}{2}
\left(
|T|^{t} U |T|^{1-t}
+
|T|^{1-t} U |T|^{t}
\right).$$

Clearly,
$\widehat{T}(0) = \widehat{T}
\quad \text{and} \quad
\widehat{T}\!\left(\frac12\right) = \widetilde{T}.$

In 2021, Zamani [\citenum{zamani2021extension}] introduced the $\lambda$–mean transform $M_{\lambda}(T)$ 
of $T.$  It is defined by
$$
M_{\lambda}(T)
=
\lambda T
+
(1-\lambda) T^{D}, \qquad \lambda \in [0,1].
$$
In particular,
$
M_{0}(T) = T^{D}
\quad \text{and} \quad
M_{1/2}(T) = \widehat{T}.
$

In [\citenum{zamani2021extension}], Zamani  established two-sided bounds for the norm of the $\lambda$–mean transform:
\begin{equation}\label{9}
2\sqrt{\lambda - \lambda^2}\,\|\widetilde{T}\|
\le
\|M_{\lambda}(T)\|
\le
\lambda\|T\| + (1-\lambda)\|T^{D}\|.
\tag{9}
\end{equation}

In particular,
\begin{equation}\label{10}
2\sqrt{\lambda - \lambda^2}\, r(T)
\le
\|M_{\lambda}(T)\|
\le
\|T\|,
\tag{10}
\end{equation}
where $T\in \mathcal{B}(\mathcal{H})$ and $\lambda \in [0,1].$

In the same paper [\citenum{zamani2021extension}], Zamani improved the second inequality of (\ref{9}) by demonstrating that if  $T\in \mathcal{B}(\mathcal{H})$ and $\lambda \in [0,1],$
then 
\begin{equation}\label{11}
\|M_{\lambda}(T)\|
\le
\frac{
\|\lambda| T |+ (1-\lambda)|T^{D}|\,\|
+
\|\lambda |T^{*} |+ (1-\lambda)|(T^{D})^{*}|\,\|
}{2}
\le
\lambda\|T\| + (1-\lambda)\|T^{D}\|.
\tag{11}
\end{equation}

In particular,
\begin{equation}\label{12}
\|\widehat{T}\|
\le
\frac{
\|\,|T| + |T^{D}|\,\|
+
\|\,|T^{*}| + |(T^{D})^{*}|\,\|
}{4}
\le
\|T\|.
\tag{12}
\end{equation}

Since $r_{\sigma}(.)$ is commutative, we have, for $\lambda = 0,$  $M_0(T) = T_D$, $r_{\sigma}(T_D) = r_{\sigma}(|T|V) = r_{\sigma}(V|T|) = r_{\sigma}(T).$ For $\lambda = 1,$ $M_1(T) = T$, $r_{\sigma}(M_{1}(T)) = r_{\sigma}(T).$ 

For the quasi-normality of $T,$ $r_{\sigma}(M_{\lambda}(T)) = r_{\sigma}(T)$ holds for $\lambda \in [0,1].$ This observation leads to a natural question:
\begin{problem}\label{problem1}
    Does an inequality analogous to $r_{\omega}(M_\lambda(T)) \le r_{\omega}(T)$ hold for the spectral radius; that is, does $r_{\sigma}(M_\lambda(T)) \le r_{\sigma}(T)$ hold for all $\lambda \in (0,1)$?
\end{problem}

It is clear that for $\lambda = 1,$ $M_1(T) = T$, $M_{1}(T^*) = (M_{1}(T))^*.$ It raises immediately the following question:
\begin{problem}\label{problem2}
Under what condition does $M_\lambda(T^*) = (M_\lambda(T))^*$ holds for $\lambda \in [0,1)$?
\end{problem}

We have for $n \in \mathbb{N},$ $r_{\omega}(T^n) \le r_{\omega}^n(T)$ and $r_{\omega}(M_\lambda(T)) \le r_{\omega}(T)$. 
That implies $(r_{\omega}(M_\lambda(T)))^n \le (r_{\omega}(T))^n,$  for $n \in \mathbb{N}.$

For $\lambda = 1$ or $T$ is quasi-normal, we have  $M_\lambda(T) = T$. In this case, $ r_{\omega}(T^n) \le ( r_{\omega}(M_\lambda(T)))^n$ holds for $n \in \mathbb{N}$.

For $n=1$, $r_{\omega}(M_\lambda(T))\le r_{\omega}(T) $ holds for $\lambda \in [0, 1]$. This motivates the next question:
\begin{problem}\label{problem3}
Is there any relation between $(r_{\omega}(M_\lambda(T)))^n$ and $r_{\omega}(T^n)$ for $\lambda \in [0,1)$ and $n \ge 2$?
\end{problem}

There has been a lot of interaction between operator mean transforms and Aluthge type  transforms for their involvement in refining norm and numerical radius inequalities.

Notice that $\|\widetilde{T}\|=\| |T|^{1/2} U |T|^{1/2}\|\le \|U\|\; \|\,|T|^{\frac{1}{2}}\,\|^2\le\|T\|.$
In 2019, Chabbabi et al. [\citenum{chabbabi2019mean}] showed that if $T \in \mathcal{B}(\mathcal{H}),$ then
\begin{equation}\label{13}\|\widetilde{T}\|\le\|\widehat{T}\|\le\|T\|.\tag{13}\end{equation}

Currently, in Altwaijry et al. [\citenum{altwaijry2024new}, Theorem 3.2] the authors established that if $T \in \mathcal{B}(\mathcal{H})$ and $\lambda \in \left[0,\frac12\right],$ then
\begin{equation}\label{14}
\|\widetilde{T}\|
\le
\|\widehat{T}(\lambda)\|
\le
\frac{1}{2}
\left(
\|T^{D}\|^{\lambda}\|T\|^{1-\lambda}
+
\|T^{D}\|^{1-\lambda}\|T\|^{\lambda}
\right)
\le
\|T\|\tag{14}
\end{equation}

and
\begin{equation}\label{15}
\|\widetilde{T}\|
\le
\|\widehat{T}(\lambda)\|
\le
2\lambda \|\widetilde{T}\|
+
(1-2\lambda)\|\widehat{T}\|
\le
\|\widehat{T}\|
\le
\frac12\left(\|T^{D}\|+\|T\|\right)
\le
\|T\|.\tag{15}
\end{equation}

In particular, $\|T^{D}\|\le \|T\|$ and if $\lambda=0$, then
\begin{equation}\label{16}
\|\widetilde{T}\|
\le
\|\widehat{T}\|
\le
\frac12\left(\|T^{D}\|+\|T\|\right)
\le
\|T\|.
\tag{16}
\end{equation}
Even though Zamani, Chabbabi, and Altwaijry made a lot of progress, the complicated relationship between the Aluthge, Duggal, and $\lambda$-mean transforms shows that the present inequalities are still not complete. Because of these recent progresses, the goal of this paper is to make the norm inequalities that come with these parameterized transforms even more general and better. By building on Zamani and Altwaijry's work, we hope to find more accurate estimates and create a more unified way to bound the $\lambda$-mean transform.

\subsection{Organization of the paper}
The structure of this paper is as follows:

In Section \ref{section2}, by extending Buzano-type inequalities to derive revised bounds for operators of the form $AXB$, we are able to obtain  numerical radius estimates that generalize the findings that Sababheh et al. [\citenum{sababheh2025inner}]  had previously discovered.  

In Section \ref{section3}, we methodically tackle and deliver comprehensive answers to all three issues presented earlier. We proceed to obtain stronger upper bounds for the $\lambda$–mean transform $M_{\lambda}(T)=\lambda T+(1-\lambda)T^{D}$ in regard to $|T|$ and $|T^*|$, thereby strengthening Zamani's inequalities (\ref{9}), (\ref{10}). We also offer an enhanced norm estimate for the $\lambda$-mean transform via an integral numerical-radius representation of a related operator matrix. We provide a necessary and sufficient criterion for the norm-preserving characteristic of the $\lambda$-mean transform.
\section{Numerical radius inequalities} \label{section2}

The following lemmas are crucial for our purpose.

\begin{lemma} \label{Lemma8}

[\citenum{bhatia1993more}]
 Let $A,B \in \mathcal{B}(\mathcal{H}).$  Then
 \begin{align*}
  \|AB^*\|\le\frac{1}{2}\|A^*A+B^*B\|.   
 \end{align*}
\end{lemma}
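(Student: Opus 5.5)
The plan is to reduce the inequality to a statement about positive $2\times 2$ block operators on $\mathcal{H}\oplus\mathcal{H}$, where the product $AB^*$ appears as an off-diagonal block. Consider the operator
\[
R=\begin{bmatrix} A & 0\\ B & 0\end{bmatrix}\in\mathcal{B}(\mathcal{H}\oplus\mathcal{H}).
\]
A direct computation gives
\[
R^*R=\begin{bmatrix} A^*A+B^*B & 0\\ 0 & 0\end{bmatrix},\qquad RR^*=\begin{bmatrix} AA^* & AB^*\\ BA^* & BB^*\end{bmatrix}.
\]
Since $\|RR^*\|=\|R\|^2=\|R^*R\|$, we get $\|RR^*\|=\|A^*A+B^*B\|$. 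The inequality will therefore follow once we show that the off-diagonal block $AB^*$ of the positive operator $RR^*$ has norm at most $\tfrac12\|RR^*\|$.

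The key step is the following claim: if $P=\begin{bmatrix} P_{11} & Y\\ Y^* & P_{22}\end{bmatrix}\ge 0$, then $\|Y\|\le \tfrac12\|P\|$. To prove it, let $U=\begin{bmatrix} I&0\\0&-I\end{bmatrix}$, which is unitary, and note that $UPU^*=\begin{bmatrix} P_{11} & -Y\\ -Y^* & P_{22}\end{bmatrix}$. Then
\[
P-UPU^*=2\begin{bmatrix}0&Y\\Y^*&0\end{bmatrix}.
\]
Both $P$ and $UPU^*$ are positive with norm $\|P\|$, so $0\le P,\,UPU^*\le \|P\|I$. Hence
\[
-\|P\|I\le P-UPU^*\le \|P\|I,
\]
which means the self-adjoint operator $P-UPU^*$ has norm at most $\|P\|$. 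The self-adjoint block operator $\begin{bmatrix}0&Y\\Y^*&0\end{bmatrix}$ has norm exactly $\|Y\|$, because its square is $\mathrm{diag}(YY^*,Y^*Y)$. Therefore $2\|Y\|\le\|P\|$.

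Applying the claim to $P=RR^*$ with $Y=AB^*$ gives $\|AB^*\|\le\tfrac12\|A^*A+B^*B\|$, as required. The only delicate point is the claim itself, specifically the bound $\|P-UPU^*\|\le\|P\|$. A naive triangle inequality would give only $2\|P\|$; the sharper bound depends on both operators lying between $0$ and $\|P\|I$. Once the claim is in place, the rest is routine block-matrix bookkeeping.
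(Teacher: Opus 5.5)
Your proof is correct. $R^*R=\operatorname{diag}(A^*A+B^*B,\,0)$, and $RR^*$ has $AB^*$ as its off-diagonal block. Your claim about positive $2\times 2$ operator matrices is sound, since $-\|P\|I\le -UPU^*\le P-UPU^*\le P\le\|P\|I$ and $\left\|\begin{bmatrix}0&Y\\Y^*&0\end{bmatrix}\right\|=\|Y\|$.

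The paper itself gives no proof of this lemma; it only cites Bhatia--Davis. Their theorem is the stronger inequality $|||A^*XB|||\le\frac12|||AA^*X+XBB^*|||$, stated for matrices and every unitarily invariant norm. The lemma is the case $X=I$ with $A,B$ replaced by $A^*,B^*$. As best I recall, they prove it by reducing to positive matrices and using a Schur-multiplier argument.

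Your route is the classical Bhatia--Kittaneh block-matrix argument, specialised to $X=I$ and the operator norm. It gives a short, self-contained proof that uses only the C$^*$-identity and the order structure. It also works verbatim on an infinite-dimensional $\mathcal{H}$, which is where the paper actually applies the lemma (Theorem~\ref{Theorem6}); the cited source is phrased for matrices.

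In the matrix case your method extends to all unitarily invariant norms. Replace the two-sided norm bound by Weyl's monotonicity applied to $2\begin{bmatrix}0&Y\\Y^*&0\end{bmatrix}=P-UPU^*\le P$. What your method does not reach is the version with an arbitrary middle factor $X$, which is the real extra content of the cited result.
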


\begin{lemma}\label{Lemma9}

[\citenum{kittaneh1988notes}] Let $P$ $\in{\mathcal{B(H)}}$ and $x,y$ $\in{\mathcal{H}}$. If $f,g$ are non-negative continuous functions on $[0,\infty)$ which are satisfying the relation $f(c)g(c)=c$ $(c\in[0,\infty))$, then
$$\Big|\langle Px,y\rangle\Big|^2\le\Big\langle |P|x,x\Big\rangle \Big\langle |P^*|y,y \Big\rangle$$ and more generally
$$\Big|\langle Px,y\rangle\Big|^2\le\Big\langle f^2(|P|)x,x\Big\rangle \Big\langle g^2(|P^*|)y,y\Big \rangle.$$
\end{lemma}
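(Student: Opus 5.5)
The statement to be proved is Lemma~\ref{Lemma9}, the mixed Schwarz inequality of Kittaneh. I will prove the general form first and read off the first inequality as the special case $f(c)=g(c)=c^{1/2}$.

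\emph{Polar decomposition.} Write $P=V|P|$ with $V$ a partial isometry, where $V^*V$ is the projection onto $\overline{\operatorname{ran}|P|}$. I will use two standard identities:
\[
|P^*|=V|P|V^*, \qquad V h(|P|)V^* = h(|P^*|)
\]
for every continuous $h$ on $[0,\infty)$ with $h(0)=0$.

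\emph{Applying Cauchy--Schwarz.} Since $f,g\ge 0$ are continuous and $f(c)g(c)=c$, functional calculus gives $|P|=g(|P|)f(|P|)$, the two factors commuting. Hence
\[
\langle Px,y\rangle=\langle V g(|P|) f(|P|)x,y\rangle=\langle f(|P|)x,\; g(|P|)V^*y\rangle .
\]
The Cauchy--Schwarz inequality (\ref{(2)}) then yields
\[
|\langle Px,y\rangle|^2\le \langle f^2(|P|)x,x\rangle\,\langle V g^2(|P|)V^*y,y\rangle .
\]

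\emph{Identifying the second factor.} It remains to show $\langle V g^2(|P|)V^*y,y\rangle\le \langle g^2(|P^*|)y,y\rangle$. This is the main obstacle, because $g(0)$ need not vanish. I would split
\[
g^2(|P|) = g^2(|P|)E + g(0)^2(I-E),
\]
where $E=V^*V$ is the range projection of $|P|$.

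On $\operatorname{ran}E$, the operator $V$ is an isometry onto $\overline{\operatorname{ran}|P^*|}$, and it intertwines the two moduli: $V|P|=|P^*|V$. This should give $V g^2(|P|)E V^* = g^2(|P^*|)F$, where $F=VV^*$. The intertwining passes to polynomials and then to continuous functions, with care on the kernel parts. For the other part, $V(I-E)V^*=0$ because $VE=V$, so that term vanishes.

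Therefore
\[
\langle V g^2(|P|)V^*y,y\rangle=\langle g^2(|P^*|)Fy,y\rangle \le \langle g^2(|P^*|)y,y\rangle .
\]
The final inequality holds because $g^2(|P^*|)$ commutes with $F$, and $g^2(|P^*|)(I-F)=g(0)^2(I-F)\ge 0$.

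\emph{Conclusion.} Combining the steps gives the general inequality. Taking $f=g=\sqrt{\cdot}$ gives the first displayed inequality.
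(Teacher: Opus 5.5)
Your proof is correct and is the standard polar-decomposition plus Cauchy--Schwarz argument for Kittaneh's mixed Schwarz inequality; the paper gives no proof of its own and simply quotes the result from Kittaneh's 1988 paper. The kernel bookkeeping you identify as the main obstacle is not actually needed: $V|P|=|P^*|V$ gives $Vp(|P|)=p(|P^*|)V$ for every polynomial $p$, constants included, and hence $Vh(|P|)=h(|P^*|)V$ for every continuous $h$ on $[0,\|P\|]$, so $Vg^2(|P|)V^*=g^2(|P^*|)VV^*\le g^2(|P^*|)$ follows at once from $g^2(|P^*|)(I-VV^*)=g(0)^2(I-VV^*)\ge 0$.
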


We commence by extending the bound established by Sababheh et al.[\citenum{sababheh2025inner}]. We introduce an arbitrary complex parameter $\lambda$ and obtain a more general class of Buzano-type inequalities for operators of the kind $AXB$. The known result follows immediately as a particular case, whereas the parameterized version demonstrated here is more flexible and serves as a basis for various better numerical radius inequalities obtained later in this study.

\begin{theorem}\label{Theorem 1}
    Let $A,B \in \mathcal{B}(\mathcal{H})$ and $X \in \mathcal{B}(\mathcal{H})^{+}$. Then for $\lambda \in \mathbb{C}$,
\begin{equation}\label{17}
\left| \langle AXB x , y \rangle \right|
\le 
\|X\|
\left[
\max\{|\lambda|, |1-\lambda|\}
\|Bx\|\,\|A^{*}y\|
+
|\lambda|\, |\langle ABx , y \rangle|
\right].
\tag{17}\end{equation}
In particular,
\begin{equation}\label{18}
\left| \langle AXB x , y \rangle \right|
\le 
\frac{\|X\|}{2}
\left(
|\langle ABx , y \rangle|
+
\|Bx\|\,\|A^{*}y\|
\right).
\tag{18}\end{equation}
\end{theorem}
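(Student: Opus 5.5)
Since $X\ge 0$, I write $\langle AXBx,y\rangle=\langle XBx,A^*y\rangle$. Setting $u=Bx$ and $v=A^*y$ reduces \eqref{17} to the single-operator estimate
\begin{equation*}
|\langle Xu,v\rangle|\le \|X\|\bigl[\max\{|\lambda|,|1-\lambda|\}\,\|u\|\,\|v\|+|\lambda|\,|\langle u,v\rangle|\bigr],
\end{equation*}
because $\langle u,v\rangle=\langle ABx,y\rangle$. If $X=0$ there is nothing to prove. Otherwise I normalize and put $S=X/\|X\|$, so that $0\le S\le I$.

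The key device is the operator $2S-I$. It is self-adjoint, and since $0\le S\le I$ its spectrum lies in $[-1,1]$, so $\|2S-I\|\le 1$. This is the operator form of the usual proof of Buzano's inequality, in which one uses $\|2P-I\|=1$ for a projection $P$. Writing $S=\frac12\bigl((2S-I)+I\bigr)$ and, more generally, splitting with the free parameter $\lambda$, I would use the decomposition
\begin{equation*}
S=\lambda I+\bigl(S-\lambda I\bigr).
\end{equation*}
This gives
\begin{equation*}
|\langle Su,v\rangle|\le |\lambda|\,|\langle u,v\rangle|+\|S-\lambda I\|\,\|u\|\,\|v\|.
\end{equation*}
It then remains to show $\|S-\lambda I\|\le \max\{|\lambda|,|1-\lambda|\}$. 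Since $S$ is self-adjoint with $\sigma(S)\subseteq[0,1]$, we have $\|S-\lambda I\|=\sup_{t\in\sigma(S)}|t-\lambda|\le\sup_{t\in[0,1]}|t-\lambda|$. The function $t\mapsto|t-\lambda|$ is convex on $[0,1]$, so its maximum is attained at an endpoint and equals $\max\{|\lambda|,|1-\lambda|\}$. Multiplying back by $\|X\|$ gives \eqref{17}.

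For \eqref{18} I take $\lambda=\tfrac12$. Then $\max\{|\lambda|,|1-\lambda|\}=\tfrac12$ and $|\lambda|=\tfrac12$, which gives exactly the factor $\frac{\|X\|}{2}$ in front of both terms, recovering \eqref{6}.

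There is no real obstacle in this argument; the substance lies in choosing the decomposition $S=\lambda I+(S-\lambda I)$. The only point needing care is the spectral-theorem step $\|S-\lambda I\|=\sup_{t\in\sigma(S)}|t-\lambda|$ for complex $\lambda$. It holds because $S-\lambda I$ is normal, and the norm of a normal operator equals its spectral radius.
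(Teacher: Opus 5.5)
Your proposal is correct and follows essentially the same route as the paper's proof. Both write $X=\lambda I+(X-\lambda I)$, apply Cauchy--Schwarz, bound $\|X-\lambda I\|\le\max\{|\lambda|,|1-\lambda|\}$ via the spectrum of a positive contraction, rescale by $\|X\|$, and substitute $Bx$ and $A^{*}y$. Your remark about $2S-I$ is only motivation and is not needed for the argument.
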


\begin{proof} : Since inequality (\ref{18}) is obtained from (\ref{17}) by putting $\lambda=\frac{1}{2},$ it is sufficient to prove (\ref{17}).
If $X=O$ then the inequality (\ref{17}) is trivially true.

Assume first that $X$ is a non-zero positive contraction operator.  
For any $\lambda \in \mathbb{C}$, we have
\begin{align*}
    |\langle Xx , y \rangle|
\le
|\langle (X-\lambda I)x , y \rangle|
+
|\lambda|\,|\langle x , y \rangle|.
\end{align*}
By Cauchy-Schwarz inequality, we get
\begin{equation}\label{19}
|\langle Xx , y \rangle|
\le
\|X-\lambda I\|\,\|x\|\,\|y\|
+
|\lambda|\,|\langle x , y \rangle|.\tag{19}
\end{equation}

Since $\sigma(X) \subset [0,1]$, we obtain
\[
\|X-\lambda I\|
=
\sup_{k \in \sigma(X)} |k-\lambda|
\le
\max\{|\lambda|,|1-\lambda|\}.
\]
Now from \eqref{19}, we get
\begin{equation}\label{20}
|\langle Xx , y \rangle|
\le
\max\{|\lambda|,|1-\lambda|\}\,\|x\|\,\|y\|
+
|\lambda|\,|\langle x , y \rangle|.\tag{20}
\end{equation}

Define the operator $\widetilde{X}$ by
\[
\widetilde{X}=\frac{X}{\|X\|},\,
\text{where $X\ne O$ is a positive operator.}\]
It is easy to see that $\widetilde{X}$ is a positive contraction.  
Replacing $X$ by $\widetilde{X}$ in \eqref{20}, we infer that
\[
|\langle Xx , y \rangle|
\le
\|X\|
\Big[
\max\{|\lambda|,|1-\lambda|\}\,\|x\|\,\|y\|
+
|\lambda|\,|\langle x , y \rangle|
\Big].
\]

Finally, replacing $x$ and $y$ by $Bx$ and $A^{*}y$, respectively, we obtain
\[
|\langle AXB x , y \rangle|
\le
\|X\|
\Big[
\max\{|\lambda|,|1-\lambda|\}\,\|Bx\|\,\|A^{*}y\|
+
|\lambda|\,|\langle ABx , y \rangle|
\Big],
\]
which completes the proof.
\end{proof}

The subsequent outcome offers a cohesive framework for estimating the numerical radius of $AXB$ when $X$ is positive. The resulting inequalities include a free parameter $\lambda$ and produce adaptable bounds given in terms of $r_{\omega}(AB)$ and operator norms.

\begin{theorem}\label{Theorem2}
Let $A,X,B \in \mathcal{B}(\mathcal{H})$ and let $X$ be positive. Then
\begin{equation}\label{21}
r_{\omega}(AXB)
\le
\|X\|
\left[
\frac{\max\{|\lambda|,|1-\lambda|\}}{2}
\|\,|A^*|^{2}+|B|^{2}\|
+
|\lambda|\,r_{\omega}(AB)
\right]\tag{21}
\end{equation}
and
\begin{equation}\label{22}
r_{\omega}(AXB)
\le
\|X\|
\left[
|\lambda|\,r_{\omega}(AB)
+
\frac{\max\{|\lambda|,|1-\lambda|\}}{2}
\left(
\|A\|\|B\|
+
\|BA\|
\right)
\right].\tag{22}
\end{equation}
\end{theorem}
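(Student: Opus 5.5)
Both bounds come from specializing Theorem~\ref{Theorem 1} to $y=x$ with $\|x\|=1$ and then taking the supremum over unit vectors. Inequality (\ref{17}) with $y=x$ gives
\[
|\langle AXBx,x\rangle|\le \|X\|\Big[\max\{|\lambda|,|1-\lambda|\}\,\|Bx\|\,\|A^*x\|+|\lambda|\,|\langle ABx,x\rangle|\Big].
\]
The second term is at most $|\lambda|\,r_\omega(AB)$. So everything reduces to estimating the product $\|Bx\|\,\|A^*x\|$ uniformly over unit vectors $x$, in two different ways.

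For (\ref{21}), I apply the arithmetic–geometric mean inequality:
\[
\|Bx\|\,\|A^*x\|\le \tfrac12\big(\|Bx\|^2+\|A^*x\|^2\big)=\tfrac12\big\langle (B^*B+AA^*)x,x\big\rangle=\tfrac12\big\langle (|B|^2+|A^*|^2)x,x\big\rangle .
\]
Since $|A^*|^2+|B|^2$ is positive, this is at most $\tfrac12\|\,|A^*|^2+|B|^2\|$. Substituting and taking the supremum over $\|x\|=1$ gives (\ref{21}) directly.

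For (\ref{22}), the target $\tfrac12(\|A\|\|B\|+\|BA\|)$ is exactly the bound produced by the Buzano inequality (\ref{4}). I write $\|Bx\|\,\|A^*x\|=\sup_{\|z\|=1}|\langle Bx,z\rangle|\,\|A^*x\|$ and then try to express the product as $|\langle u,x\rangle\langle x,v\rangle|$ for suitable vectors $u,v$. First, $\|A^*x\|=|\langle A^*x,w\rangle|=|\langle x,Aw\rangle|$ for some unit vector $w$. Second, $|\langle Bx,z\rangle|=|\langle x,B^*z\rangle|$. Hence
\[
\|Bx\|\,\|A^*x\|=\sup_{\|z\|=\|w\|=1}|\langle B^*z,x\rangle\langle x,Aw\rangle|\le \sup_{\|z\|=\|w\|=1}\tfrac12\big(\|B^*z\|\,\|Aw\|+|\langle B^*z,Aw\rangle|\big),
\]
where the inequality is (\ref{3}) with $e=x$. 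The right-hand side is at most $\tfrac12(\|B\|\|A\|+\|BA\|)$, because $\langle B^*z,Aw\rangle=\langle z,BAw\rangle$ is bounded in modulus by $\|BA\|$. Substituting this and taking the supremum over $x$ yields (\ref{22}).

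The main obstacle is this second step: finding the right pairing so that the cross term in Buzano becomes $\|BA\|$ and not $\|AB\|$ or some other combination. Placing $B^*z$ on one side and $Aw$ on the other makes the inner product $\langle z,BAw\rangle$, which gives $BA$ as required. The remaining points are routine. The inequality (\ref{17}) holds for every $\lambda\in\mathbb{C}$, so both conclusions hold for every $\lambda\in\mathbb{C}$. Setting $\lambda=\tfrac12$ recovers bounds of the Sababheh et al.\ type.
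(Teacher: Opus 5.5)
Your proof is correct and is the argument the paper intends: the paper omits the details and says only that one combines the preceding result (inequality (\ref{17})) with the method of Sababheh et al. That amounts to your steps of setting $y=x$ in (\ref{17}), bounding $\|Bx\|\,\|A^*x\|$ by the AM--GM inequality to get (\ref{21}), and by Buzano's inequality with $e=x$ to get (\ref{22}). Your pairing $\langle B^*z,Aw\rangle=\langle z,BAw\rangle$ is the right one, since the intermediate estimate on $\|Bx\|\,\|A^*x\|$ would be false with $\|AB\|$ in place of $\|BA\|$ (take the matrix units $A=E_{12}$, $B=E_{11}$ and $x=e_1$).
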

\begin{proof}: 
 The proof adheres to the methodology outlined in Theorem 2.4 of [\citenum{sababheh2025inner}]. In view of the previous result, the desired bounds are derived by straightforward adaptations, and hence, we omit the details.   
\end{proof}

From Theorem \ref{Theorem2}, we immediately get the following corollary:
\begin{corollary}\label{Corollary9}
Let $S,T\in \mathcal{B}(\mathcal{H}).$ Then for any $\lambda\in{\mathbb{C}}$ and $t\in[0,1],$ we have

  \begin{align*}
(a)\quad &r_{\omega}(ST)
\le\frac{\max\{|\lambda|,|1-\lambda|\} +|\lambda|}{2} \;\|S\|^{t}\;
\|\,|S^*|^{2(1-t)}+|T|^2\,\|
\qquad\text{and}
\\&r_{\omega}(ST)
\le
\frac{\|S\|^{t}}{2}
\Big[
|\lambda|\;
\|\,|S^*|^{2(1-t)}+|T|^2\,\|
+2\max\{|\lambda|,|1-\lambda|\}\;
\|S\|^{1-t}\;\|T\|
\Big].
\end{align*}

\begin{align*} (b) &\quad r_{\omega}(ST)
\le
\frac{\max\{|\lambda|,|1-\lambda|\} +|\lambda|}{2} \;\|S\|^{1-t}\;
\|\,|S^*|^{2t}+|T|^2\,\|
\qquad\text{and}
\\&
r_{\omega}(ST)
\le
\frac{\|S\|^{1-t}}{2}
\Big[
|\lambda|\;
\|\,|S^*|^{2t}+|T|^2\,\|
+2\max\{|\lambda|,|1-\lambda|\}
\;\|S\|^{t}\;\|T\|
\Big].
\end{align*}
\begin{align*}
(c)\quad &r_{\omega}(ST)
\le
\frac{\max\{|\lambda|,|1-\lambda|\} +|\lambda|}{2} \;\|T\|^{t}\;
\|\,|T|^{2(1-t)}+|S^*|^2\,\|
\qquad\text{and}
\\&
r_{\omega}(ST)
\le
\frac{\|T\|^{t}}{2}
\Big[
|\lambda|\;
\|\,|T|^{2(1-t)}+|S^*|^2\,\|
+2\max\{|\lambda|,|1-\lambda|\}\;
\|T\|^{1-t}\;\|S\|
\Big].
\end{align*}

\begin{align*} (d) \quad&r_{\omega}(ST)
\le
\frac{\max\{|\lambda|,|1-\lambda|\} +|\lambda|}{2} \;\|T\|^{1-t}\;
\|\,|T|^{2t}+|S^*|^2\,\|
\qquad\text{and}
\\&
r_{\omega}(ST)
\le
\frac{\|T\|^{1-t}}{2}
\Big[
|\lambda|\;
\|\,|T|^{2t}+|S^*|^2\,\|
+2\max\{|\lambda|,|1-\lambda|\}
\;\|T\|^{t}\;\|S\|
\Big].
\end{align*}
\end{corollary}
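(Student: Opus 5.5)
We will derive every bound by a suitable factorization $ST=AXB$ with $X\ge 0$, and then apply Theorem \ref{Theorem2}. Let $S=U|S|$ be the polar decomposition. Then $S=|S^*|U$, so for $t\in[0,1]$ we can write
\[
ST=|S^*|^{1-t}\,|S^*|^{t}\,UT .
\]
For (a) we take $A=|S^*|^{1-t}$, $X=|S^*|^{t}\ge 0$ and $B=UT$. Then $AXB=ST$ and $\|X\|=\|S\|^{t}$.

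The needed quantities are computed as follows.
\begin{itemize}
\item Since $A$ is self-adjoint, $|A^*|^2=AA^*=|S^*|^{2(1-t)}$.
\item We have $|B|^2=T^*U^*UT\le T^*T=|T|^2$, because $U^*U$ is an orthogonal projection. The operator norm is monotone on positive operators, so
\[
\|\,|A^*|^2+|B|^2\|\le \|\,|S^*|^{2(1-t)}+|T|^2\|=:N .
\]
\item For $r_\omega(AB)$ we use $r_\omega(AB)\le\|AB\|=\|A(B^*)^*\|$. Lemma \ref{Lemma8}, applied with the pair $(A^*,B)$, gives $\|AB\|\le\frac12\|AA^*+B^*B\|\le \frac12 N$.
\end{itemize}
Inserting these into (\ref{21}) gives
\[
r_\omega(ST)\le \|S\|^t\Big[\tfrac{\max\{|\lambda|,|1-\lambda|\}}{2}N+\tfrac{|\lambda|}{2}N\Big],
\]
which is the first inequality in (a).

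For the second inequality in (a) we use (\ref{22}) with the same $A,X,B$. We have $\|A\|\|B\|\le \|S\|^{1-t}\|T\|$ and $\|BA\|\le\|B\|\|A\|\le \|S\|^{1-t}\|T\|$. Together with $r_\omega(AB)\le \frac12 N$, this gives
\[
r_\omega(ST)\le \|S\|^t\Big[\tfrac{|\lambda|}{2}N+\max\{|\lambda|,|1-\lambda|\}\|S\|^{1-t}\|T\|\Big],
\]
which is the stated form after factoring out $\frac12$.

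Part (b) is part (a) with $t$ replaced by $1-t$, which is still in $[0,1]$. Parts (c) and (d) follow from (a) and (b) applied to the adjoint. We use $r_\omega(ST)=r_\omega((ST)^*)=r_\omega(T^*S^*)$ and apply (a), (b) to the pair $(T^*,S^*)$ in place of $(S,T)$. Under this substitution $\|T^*\|=\|T\|$, $|(T^*)^*|=|T|$ and $|S^*|^2$ appears in the role of $|T|^2$. This produces exactly the expressions in (c) and (d).

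The main point requiring care is the step $\|\,|A^*|^2+|B|^2\|\le\|\,|S^*|^{2(1-t)}+|T|^2\|$. It relies on $T^*U^*UT\le T^*T$ and on the monotonicity of the norm on positive operators. A second point is to apply Lemma \ref{Lemma8} with the right pairing, so that $\|AB\|$ is controlled by $\frac12\|AA^*+B^*B\|$ rather than by the other arrangement. Everything else is direct substitution into Theorem \ref{Theorem2}.
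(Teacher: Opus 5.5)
The step that fails is your bound on $r_\omega(AB)$. You write $r_\omega(AB)\le\|AB\|$ and then claim that Lemma~\ref{Lemma8} applied to the pair $(A^*,B)$ gives $\|AB\|\le\frac12\|AA^*+B^*B\|$. It does not. With that pairing the lemma bounds $\|A^*B^*\|=\|BA\|$, not $\|AB\|$. No other pairing helps either: Lemma~\ref{Lemma8} always bounds $\|AB\|$ by $\frac12\|A^*A+BB^*\|$, and here that involves $BB^*=UTT^*U^*$ rather than $|T|^2$.

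The norm inequality you need is in fact false even for your particular choice of $A$ and $B$. Take $\mathcal H=\mathbb C^2$, $S=\begin{bmatrix}1&0\\0&0\end{bmatrix}$ (so $U=|S^*|=S$), $T=\begin{bmatrix}0&1\\0&0\end{bmatrix}$ and $t=\frac12$. Then $A=S$ and $B=UT=T$, so $\|AB\|=\|T\|=1$. On the other hand $AA^*+B^*B=I$, so $\frac12 N=\frac12\|\,|S^*|+|T|^2\|=\frac12$. Passing from the numerical radius to the operator norm is too lossy at this point.

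The repair is to estimate the numerical radius directly. This is exactly the fact $r_\omega(A^*B)\le\frac12\|\,|A|^2+|B|^2\|$ that the paper invokes. For a unit vector $x$,
\[
|\langle ABx,x\rangle|=|\langle Bx,A^*x\rangle|\le\|Bx\|\,\|A^*x\|\le\tfrac12\langle(AA^*+B^*B)x,x\rangle .
\]
Hence $r_\omega(AB)\le\frac12\|AA^*+B^*B\|\le\frac12 N$ by your monotonicity step. In the example above $r_\omega(AB)=\frac12$, so this bound is sharp.

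With that substitution the rest of your argument goes through, and it is a genuinely different route from the paper's:
\begin{itemize}
\item For (a), the paper factors $ST=(U|S|^{1-t})\,|S|^{t}\,T$ and uses $U|S|^{p}U^*=|S^*|^{p}$. You factor through $|S^*|$ and absorb $U$ into $B$, paying only $T^*U^*UT\le|T|^2$.
\item For (c) and (d), you derive them from (a) and (b) via $r_\omega(ST)=r_\omega(T^*S^*)$. This is cleaner than the paper's argument. The paper takes $X=V|T|^{t}$, which is not positive, so Theorem~\ref{Theorem2} does not literally apply there.
\end{itemize}
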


\begin{proof}:
  Let $S = U|S|$ be the polar decomposition of $S$. We have the following fact: 
  
  $U|A|^{p}=|A^*|^{p}U,\,U|A|^{p}U^{*}=|A^*|^{p}, 
\,U^{*}|A^*|^{p}U=|A|^{p},
\, (p>0),$ for any $A \in \mathcal{B}(\mathcal{H})$ 
and unitary operator $U.$

Setting $A = U|S|^{1-t}$, $X = |S|^{t}$ and $B = T$ in Theorem \ref{Theorem2}, we obtain
\begin{align}
    \label{23}
r_{\omega}(ST)
&\le
\|S\|^{t}
\Bigg[
\frac{\max\{|\lambda|,|1-\lambda|\}}{2}
\left\|
U|S|^{2(1-t)}U^{*} + |T|^{2}
\right\|
+ |\lambda|\,r_{\omega}(U|S|^{1-t}T)
\Bigg]\notag\\&=\|S\|^{t}
\Bigg[
\frac{\max\{|\lambda|,|1-\lambda|\}}{2}
\left\|
|S^*|^{2(1-t)}+ |T|^{2}
\right\|
+ |\lambda|\,r_{\omega}(U|S|^{1-t}T)
\Bigg]\tag{23}\end{align}

and 

\begin{align}\label{24}
r_{\omega}(ST)
\le
\|S\|^{t}
\Bigg[
|\lambda|\,r_{\omega}(U|S|^{1-t}T)
+\frac{\max\{|\lambda|,|1-\lambda|\}}{2}
\left(
\|U|S|^{1-t}\|\;\|T\|
+\|TU|S|^{1-t}\|
\right)
\Bigg] .\tag{24}
\end{align}
Using the fact $r_{\omega}(A^*B)\le\frac{1}{2}\|\,|A|^2+|B|^2\,\|$ with $A^{*} = U|S|^{1-t}$ and $B = T$, we get
\begin{align*}
r_{\omega}(U|S|^{1-t}T)
&\le
\frac{1}{2}
\left\|
U|S|^{2(1-t)}U^{*} + |T|^{2}
\right\|  \\
&=
\frac{1}{2}
\left\|
|S^*|^{2(1-t)} + |T|^{2}
\right\|.
\end{align*}
Now, from (\ref{23}) and (\ref{24}), we obtain the desired bounds in (a).

By setting $A = U|S|^{t}$, $X = |S|^{1-t}$ and $B = T$ in Theorem \ref{Theorem2}, and then, proceeding similar process as above, we get the required bounds in (b).

Let $T = V|T|$ be the polar decomposition of $T$.
By choosing $A = S$, $X = V|T|^{t}$ and $B = |T|^{1-t}$ in Theorem \ref{Theorem2}, we establish the inequalities in (c).

Finally, by considering $A = S$, $X = V|T|^{1-t}$ and $B = |T|^{t}$ in Theorem \ref{Theorem2}, we prove the required inequalities in (d).

\end{proof}

\begin{remark}
 (a)  By putting $\lambda=\frac12$ in Theorem \ref{Theorem2}, we recover the inequalities which were given in [\citenum{sababheh2025inner}, Theorem 2.4], namely
\begin{align*}
&r_{\omega}(AXB)
\le
\frac{\|X\|}{2}
\left(
r_{\omega}(AB)
+\frac12 \|AA^{*}+B^{*}B\|
\right) \qquad \text{and}
\\&r_{\omega}(AXB)
\le
\frac{\|X\|}{2}
\left(
r_{\omega}(AB)
+\frac12
\left(
\|A\|\,\|B\|+\|BA\|
\right)
\right).
\end{align*}

(b) By putting $\lambda=\frac12$ and $t=\frac12$ in Corollary \ref{Corollary9}, we get
\begin{align*}
&r_{\omega}(ST)
\le
\frac{\|S\|^{1/2}}{2}
\|\,|S^*|+|T|^2\,\| = \alpha\, \text{(say)}
,
\\&
r_{\omega}(ST)
\le
\frac{\|S\|^{1/2}}{2}
\left[
\frac12\|\,|S^*|+|T|^{2}\,\|
+\|S\|^{1/2}\;\|T\|
\right] = \beta \,\text{(say)},
\\&
r_{\omega}(ST)
\le
\frac{\|T\|^{1/2}}{2}
\|\,|S^*|^2+|T|\,\| = \gamma \,\text{(say)}\qquad\text{and}
\\&r_{\omega}(ST)
\le
\frac{\|T\|^{1/2}}{2}
\left[
\frac12\|\,|S^*|^2+|T|\,\|
+\|S\|\;\|T\|^{1/2}
\right] = \delta \,\text{(say)}.
\end{align*}
From the above inequalities, we can write 
\begin{align*}
&r_{\omega}(ST)\le \min(\alpha, \gamma)  \quad \text{and}
\\& r_{\omega}(ST)\le \min(\beta, \delta),
\end{align*} which refine the inequalities (\ref{7}) and (\ref{8}).

We address an example to show proper improvement:

Let
$S=\begin{bmatrix}
   2 & 1 & -1\\
   -1 & -3 & -5\\
   2 & -3 & 1
\end{bmatrix}$ and 
$T=\begin{bmatrix}
    1 & 2 & 3\\
    3 & -5 & 2\\
    -7 & 1 & 3
\end{bmatrix}.$ By using MATLAB, we get 
\begin{align*}
 &r_{\omega}(ST)<  \min(\alpha, \gamma)=\gamma= 58.815624< \alpha= 90.790303 \qquad \text{and}\\& r_{\omega}(ST)<\min(\beta, \delta)=\delta= 54.463033< \beta= 70.450373.
\end{align*}

\end{remark}

\section{Bounds for  \texorpdfstring{$\lambda-$} {lamda-} mean transform} \label{section3}

We first give an answer for Problem 1.
The inequality $r_{\sigma}(M_\lambda(T)) \le r_{\sigma}(T)$ is not universally valid for the spectral radius $r_{\sigma}(\cdot)$. Indeed, the spectral radius of the $\lambda$-mean transform $r_{\sigma}(M_\lambda(T))$ can be strictly greater than $r_{\sigma}(T)$ significantly.

For this, we consider the following example:
\begin{example}
Let $H = \ell^2(\mathbb{N})$ be the Hilbert space of square-summable complex sequences, with standard orthonormal basis $\{e_0, e_1, e_2, e_3, \dots\}$.

We construct $T$ as a unilateral weighted shift operator defined by:
\[
T e_n = \alpha_n e_{n+1} \quad \text{for all } n \ge 0,
\]
where the weights $\alpha = (\alpha_0, \alpha_1, \alpha_2, \dots)$ alternate between $1$ and a fixed parameter $\epsilon \in (0,1)$:
\[
\alpha = (1, \epsilon, 1, \epsilon, 1, \epsilon, \dots), \quad \text{specifically}
\]
\[
\alpha_n = \begin{cases} 
1 & \text{if } n \text{ is even} \\ 
\epsilon & \text{if } n \text{ is odd.} 
\end{cases}
\]

We get the adjoint operator $T^*$ of $T$, defined by:
\[
T^* e_0 = 0 \quad \text{and} \quad T^* e_n = \alpha_{n-1} e_{n-1} \quad \text{for } n \ge 1.
\]

Now,
\[T^* T e_n = T^*(\alpha_n e_{n+1}) = \alpha_n^2 e_n = |T|^2 e_n = \alpha_n^2 e_n \implies |T| e_n = \alpha_n e_n.\]

Because $\alpha_n > 0$ for all $n$, $\ker(|T|) = \{0\}$.
Since $T e_n = V |T| e_n = V(\alpha_n e_n) = \alpha_n V(e_n)$, and $T e_n = \alpha_n e_{n+1}$, dividing by $\alpha_n > 0$ yields:
\[
V e_n = e_{n+1} \quad \text{for all } n \ge 0.
\]

$V$ is simply unweighted unilateral shift isometry ($V^* V = I$, $\ker(V) = \{0\}$).

By Gelfand's Spectral Radius Formula, we have
\[r_{\sigma}(T) = \lim_{k \to \infty} \|T^k\|^{\frac{1}{k}}.\]
\[
\text{If  $n$ is even  $(n = 2j)$ then
$T^2 e_{2j} = T(1 \cdot e_{2j+1}) = 1 \cdot \epsilon \cdot e_{2j+2} = \epsilon \cdot e_{2j+2}.$}\]
\[
\text{If  $n$ is odd  $(n = 2j+1)$  then  
$T^2 e_{2j+1} = T(\epsilon \cdot e_{2j+2}) = 1 \cdot \epsilon \cdot e_{2j+3} = \epsilon \cdot e_{2j+3}.$}\]

Combining this, we get $T^2 e_n = \epsilon e_{n+2}$ for all $n$.
So, 

\[T^{2k} e_n = \epsilon^k e_{n+2k} \implies \|T^{2k}\| = \epsilon^k.\]

By applying Gelfand's formula, we infer that:
\[
r_{\sigma}(T) = \lim_{k \to \infty} \|T^{2k}\|^{\frac{1}{2k}} = \lim_{k \to \infty} (\epsilon^k)^{\frac{1}{2k}} = \sqrt{\epsilon}.
\]

Let us compute the Duggal transformation:
\[
T_D e_n = |T| V e_n = |T| e_{n+1} = \alpha_{n+1} e_{n+1}.
\]

Let us compute the $\lambda$-mean transformation:
\begin{align*}
M_\lambda(T) e_n &= \lambda T e_n + (1 - \lambda) T_D e_n \\
&= \lambda (\alpha_n e_{n+1}) + (1 - \lambda) (\alpha_{n+1} e_{n+1}) \\
&= (\lambda \alpha_n + (1 - \lambda) \alpha_{n+1}) e_{n+1} \\
&= \beta_n e_{n+1} \quad \text{where } \beta_n = \lambda \alpha_n + (1 - \lambda) \alpha_{n+1}.
\end{align*}

Now,

\[(M_\lambda(T))^2 e_n = M_\lambda(T)(\beta_n e_{n+1}) = \beta_n M_\lambda(T) e_{n+1} = \beta_n \beta_{n+1} e_{n+2},\] where 
each weight is $(\lambda + (1 - \lambda)\epsilon)(\lambda \epsilon + (1 - \lambda))= \epsilon + \lambda(1 - \lambda)(1 - \epsilon)^2.$

So,

\[\|(M_\lambda(T))^{2k}\| = (\epsilon + \lambda(1 - \lambda)(1 - \epsilon)^2)^k.\]

By applying Gelfand's formula, we reach:
\[
r_{\sigma}(M_\lambda(T)) = \lim_{k \to \infty} \left( (\epsilon + \lambda(1 - \lambda)(1 - \epsilon)^2)^k \right)^{\frac{1}{2k}} = \sqrt{\epsilon + \lambda(1 - \lambda)(1 - \epsilon)^2}.
\]

We have,
\[r_{\sigma}(M_\lambda(T))^2 - r_{\sigma}(T)^2 = \epsilon + \lambda(1 - \lambda)(1 - \epsilon)^2 - \epsilon = \lambda(1 - \lambda)(1 - \epsilon)^2.\]

Since $\lambda \in (0,1)$, $\lambda(1 - \lambda) > 0$ and $(1 - \epsilon)^2 > 0,$

 \[r_{\sigma}(M_\lambda(T))^2 - (r_{\sigma}(T))^2 > 0 \implies r_{\sigma}(M_\lambda(T)) > r_{\sigma}(T) \quad \text{for all } \lambda \in (0,1).\]

\end{example}

Before giving an answer for Problem 2, we define a new class of operators:

$T$ is said to be in the $\sigma$-class if $(U^*)^2|T| = |T|(U^*)^2,$ denoted by $T \in \sigma(\mathcal{H})$.

Indeed, 
  \[T \text{\,  is quasinormal} \implies U|T| = |T|U \implies (U|T|)^* = (|T|U)^* \implies |T|U^* = U^*|T|.\]
Now,
\[(U^*)^2|T| = U^*(U^*|T|) = U^*(|T|U^*) = |T|U^* U^* = |T|(U^*)^2.\]

i.e., $T \text{ is quasinormal} \implies T \text{ is } \sigma\text{-class}$.
But the converse is not true in general.

For example, we consider the Hilbert space $H = \mathbb{C}^2$. Let us consider
$T = \begin{bmatrix} 0 & 2 \\ 1 & 0 \end{bmatrix}.$ $\text{Then } |T| = \begin{bmatrix} 1 & 0 \\ 0 & 2 \end{bmatrix}.$
For $T = U|T|$, we get
\[
U = T|T|^{-1} = \begin{bmatrix} 0 & 2 \\ 1 & 0 \end{bmatrix} \begin{bmatrix} 1 & 0 \\ 0 & 1/2 \end{bmatrix} = \begin{bmatrix} 0 & 1 \\ 1 & 0 \end{bmatrix}\implies (U^{*})^2 = \begin{bmatrix} 0 & 1 \\ 1 & 0 \end{bmatrix} \begin{bmatrix} 0 & 1 \\ 1 & 0 \end{bmatrix} = \begin{bmatrix} 1 & 0 \\ 0 & 1 \end{bmatrix} = I.
\]
So, $(U^*)^2|T| = |T| = |T|(U^*)^2 \implies T$ is in $\sigma$-class.
Now,
\[U|T| = \begin{bmatrix} 0 & 1 \\ 1 & 0 \end{bmatrix} \begin{bmatrix} 1 & 0 \\ 0 & 2 \end{bmatrix} = \begin{bmatrix} 0 & 2 \\ 1 & 0 \end{bmatrix}
\quad \text{and} \quad
|T|U = \begin{bmatrix} 1 & 0 \\ 0 & 2 \end{bmatrix} \begin{bmatrix} 0 & 1 \\ 1 & 0 \end{bmatrix} = \begin{bmatrix} 0 & 1 \\ 2 & 0 \end{bmatrix}.
\]
Since $U|T| \neq |T|U$, $T$ is not quasi-normal.

The above example  demonstrates that quasi-normality is too strong an assumption. The $\sigma$-class ($(U^*)^2|T| = |T|(U^*)^2$) is the necessary and sufficient condition for this symmetry $(M_\lambda(T^*) = (M_\lambda(T))^*)$  to hold.

\begin{proposition} Let $T \in \mathcal{B}(\mathcal{H})$. Then $M_\lambda(T^*) = (M_\lambda(T))^*,$ for $\lambda\in [0,1)$ if and only if $T$ is in the $\sigma$-class, i.e., $(U^*)^2|T| = |T|(U^*)^2$.\end{proposition}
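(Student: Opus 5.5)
The plan is to compute both sides of $M_\lambda(T^*)=(M_\lambda(T))^*$ explicitly from the polar decomposition $T=U|T|$, and then reduce the identity to a single operator equation in $U$ and $|T|$.

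\textbf{Computing the two sides.} The polar decomposition of $T^*$ is $T^*=U^*|T^*|$, with $|T^*|=U|T|U^*$. Hence the Duggal transform of $T^*$ is
\[
(T^*)^{D}=|T^*|\,U^*=U|T|(U^*)^2 ,
\]
and so
\[
M_\lambda(T^*)=\lambda T^*+(1-\lambda)\,U|T|(U^*)^2 .
\]
On the other side, $(T^{D})^*=(|T|U)^*=U^*|T|$, which gives
\[
(M_\lambda(T))^*=\lambda T^*+(1-\lambda)\,U^*|T| .
\]
The $\lambda T^*$ terms cancel, and $1-\lambda\neq 0$ because $\lambda\in[0,1)$. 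Therefore the required identity is equivalent to
\[
U|T|(U^*)^2=U^*|T| .
\]
This is exactly why $\lambda=1$ is excluded from the statement. Everything now rests on showing this equation is equivalent to $(U^*)^2|T|=|T|(U^*)^2$.

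\textbf{Necessity.} Multiply the reduced identity on the left by $U^*$. Since $U^*U$ is the orthogonal projection onto $\overline{\operatorname{ran}}|T|$, we have $U^*U|T|=|T|$. This yields
\[
|T|(U^*)^2=(U^*)^2|T| ,
\]
so $T$ is in the $\sigma$-class.

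\textbf{Sufficiency.} Assume $(U^*)^2|T|=|T|(U^*)^2$. Then
\[
U|T|(U^*)^2=U(U^*)^2|T|=(UU^*)\,U^*|T| .
\]
It remains to remove the range projection $UU^*$. This is immediate when $U$ is unitary, for instance when $T$ is invertible, or when $\mathcal{H}$ is finite dimensional and $U$ is chosen unitary, as in the $2\times 2$ example preceding the proposition. The paper already uses this convention in the proof of Corollary \ref{Corollary9}, where $U$ is treated as unitary.

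\textbf{Main obstacle.} For a genuine partial isometry, the last step needs the extra inclusion
\[
\operatorname{ran}(U^*|T|)\subseteq \overline{\operatorname{ran}}\,T .
\]
I would first try to derive this inclusion from the $\sigma$-class relation together with $\ker U=\ker|T|$. If that fails, I would state the converse under the standing assumption that $U$ is unitary, equivalently $\ker T=\ker T^*$, so that $UU^*=U^*U=I$. Under that assumption every step above is reversible and the equivalence holds.
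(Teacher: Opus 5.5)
Your reduction to $U|T|(U^*)^2=U^*|T|$ and your necessity argument are the same as the paper's. The paper's sufficiency step reads $U|T|(U^*)^2=U(U^*)^2|T|=U^*|T|$, so it silently uses $UU^*=I$. That is exactly the step you isolated, and the gap is genuine. You hoped to derive $\operatorname{ran}(U^*|T|)\subseteq\overline{\operatorname{ran}}\,T$ from the $\sigma$-class relation and $\ker U=\ker|T|$; this cannot succeed, because the ``if'' direction is false for general $T$.

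Take the unilateral shift $S$ on $\ell^2(\mathbb{N})$, $Se_n=e_{n+1}$. Here $|S|=I$ and $U=S$, so $(U^*)^2|S|=|S|(U^*)^2$ holds trivially ($S$ is even quasinormal), and $M_\lambda(S)=S$. But $|S^*|=SS^*=I-P_0$, with $P_0$ the projection onto $\mathbb{C}e_0$, so
\[
M_\lambda(S^*)=\lambda S^*+(1-\lambda)SS^*S^*=(M_\lambda(S))^*-(1-\lambda)P_0S^*,
\]
and $P_0S^*e_1=e_0\neq 0$. In your terms, $\operatorname{ran}(U^*|T|)=\operatorname{ran}S^*=\ell^2(\mathbb{N})\not\subseteq\{e_0\}^{\perp}=\overline{\operatorname{ran}}\,S$.

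In general, once $T\in\sigma(\mathcal{H})$, the required identity is equivalent to $(I-UU^*)U^*|T|=0$. Taking adjoints and using $\ker|T|=\ker U$, this is equivalent to $U^2(\ker T^*)=\{0\}$. So the true characterization is ``$T\in\sigma(\mathcal{H})$ and $U^2$ vanishes on $\ker T^*$''.

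Your fallback hypothesis is therefore forced, but state it accurately. ``$U$ unitary'' is equivalent to $\ker T=\ker T^*=\{0\}$, not to $\ker T=\ker T^*$. On the other hand, $\ker T^*\subseteq\ker T$ already suffices: then $\operatorname{ran}U^*=(\ker T)^{\perp}\subseteq(\ker T^*)^{\perp}=\operatorname{ran}U$, whence $UU^*U^*=U^*$.

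Also do not ``choose $U$ unitary'' in finite dimensions. When $\ker T\neq\{0\}$ the Duggal transform depends on that choice. For $T=\begin{bmatrix}0&1\\0&0\end{bmatrix}$, the canonical $U=T$ gives $T^D=0$. The unitary $U'=\begin{bmatrix}0&1\\1&0\end{bmatrix}$ also satisfies $T=U'|T|$, but $|T|U'\neq 0$. So you would be proving something about a different transform.

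With the canonical $U$ on a finite-dimensional space, however, your first idea does work:
\begin{itemize}
\item The $\sigma$-class relation makes $U^2$ commute with the support projection $U^*U$ of $|T|$. Hence $(U^*)^2(I-U^*U)=0$, i.e.\ $U^*(\ker U)\subseteq\ker U^*$.
\item Since $\dim\ker U=\dim\ker U^*$, counting dimensions gives $\ker U^*=(\ker U\cap\ker U^*)\oplus U^*(\ker U)$.
\item For $y\in\ker U$ one then has $UU^*y=y-P_{\ker U\cap\ker U^*}\,y\in\ker U$. So $U^2$ kills $\ker U^*$, which is the inclusion you wanted.
\end{itemize}
It is exactly the equality $\dim\ker U=\dim\ker U^*$ that breaks down for the shift.
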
 

\begin{proof}
Let us suppose that $T \in \sigma(\mathcal{H})$ with polar decomposition $T=U|T|=|T^*|U.$ That is \[T \in \sigma(\mathcal{H}) \implies (U^*)^2|T| = |T|(U^*)^2.\] Then, $T^D = |T|U$ is the Duggal transformation of $T$ and $T^* = U^*|T^*|$ is the polar decomposition of $T^*$.
So, the Duggal transformation of $T^*$ is $(T^*)^D = |T^*|U^*$.  

To show $M_\lambda(T^*) = (M_\lambda(T))^*,$ we need only to establish $(T^*)^D= (T^D)^*.$
Now,
\begin{align*}
(T^*)^D &= |T^*|U^* = (U|T|U^*) U^* \quad \left(\text{since } |T^*|^q = U|T|^q U^* \text{ for } q > 0\right) \\
&= U|T|(U^*)^2 = U(U^*)^2|T| = U^*|T| = (T^D)^*.
\end{align*}

Conversely, let us assume that $M_\lambda(T^*) = (M_\lambda(T))^*.$
\begin{align*}
&\implies \lambda T^* + (1-\lambda)(T^*)^D = \lambda T^* + (1-\lambda)(T^D)^* \\
&\implies (T^*)^D = (T^D)^*  \implies |T^*|U^* = U^*|T| \\
&\implies U^*(U^*|T|) = U^*|T^*|U^* = U^*(U|T|U^*)U^* \\
&\implies (U^*)^2|T| = |T|(U^*)^2 \implies T \text{ is in the } \sigma\text{-class.}
\end{align*}

\end{proof}
For Problem 3, we cannot expect any relation between $( r_{\omega}(M_\lambda(T)))^n $ and $r_{\omega}(T^n),$ where $\lambda \in [0, 1)$ and $n \ge 2.$

In the next two  examples, we shall show that there is no relationship between $( r_{\omega}(M_\lambda(T)))^n$ and $ r_{\omega}(T^n)$ for $n \ge 2$ and  $\lambda \in [0, 1)$. That means they are not comparable.
\begin{example}
Let $T = \begin{bmatrix} 0 & 1 & 0 \\ 0 & 0 & 1 \\ 0 & 0 & 0 \end{bmatrix}$ be the $3 \times 3$ nilpotent matrix. Then, we have 
\[ T^2 = \begin{bmatrix} 0 & 0 & 1 \\ 0 & 0 & 0 \\ 0 & 0 & 0 \end{bmatrix}, \,  T^*T =  \begin{bmatrix} 0 & 0 & 0 \\ 0 & 1 & 0 \\ 0 & 0 & 1 \end{bmatrix}\implies |T| = \begin{bmatrix} 0 & 0 & 0 \\ 0 & 1 & 0 \\ 0 & 0 & 1 \end{bmatrix}, \, U = \begin{bmatrix} 0 & 1 & 0 \\ 0 & 0 & 1 \\ 0 & 0 & 0 \end{bmatrix}, \, T_D  = \begin{bmatrix} 0 & 0 & 0 \\ 0 & 0 & 1 \\ 0 & 0 & 0 \end{bmatrix} \]\quad\text{and}
$M_\lambda(T) = \begin{bmatrix} 0 & \lambda & 0 \\ 0 & 0 & 1 \\ 0 & 0 & 0 \end{bmatrix}.$

 So, $ r_{\omega}(M_\lambda(T)) =  r_{\omega} \left(\begin{bmatrix} 0 & \lambda & 0 \\ 0 & 0 & 1 \\ 0 & 0 & 0 \end{bmatrix}\right) = \frac{\sqrt{1+\lambda^2}}{2}$.
Now, $T^2 = \begin{bmatrix} 0 & 0 & 1 \\ 0 & 0 & 0 \\ 0 & 0 & 0 \end{bmatrix}$, $ r_{\omega}(T^2) = \frac{1}{2}$.

It is clear that $( r_{\omega}(M_\lambda(T)))^2 = \frac{1+\lambda^2}{4} < \frac{1}{2}=r_{\omega}(T^2) $ for $\lambda \in [0, 1)$.

\end{example}

\begin{example}
Let $T = \begin{bmatrix} 0 & 1 \\ 2 & 0 \end{bmatrix}$. Then, we have

\[T^2 = \begin{bmatrix} 2 & 0 \\ 0 & 2 \end{bmatrix}, \,
|T| = \begin{bmatrix} 2 & 0 \\ 0 & 1 \end{bmatrix}, \, U = \begin{bmatrix} 0 & 1 \\ 1 & 0 \end{bmatrix}, \, T^D = |T|U = \begin{bmatrix} 2 & 0 \\ 0 & 1 \end{bmatrix}\begin{bmatrix} 0 & 1 \\ 1 & 0 \end{bmatrix} = \begin{bmatrix} 0 & 2 \\ 1 & 0 \end{bmatrix}.\] 

We obtain 

\[ r_{\omega}(M_{\lambda}(T)) = r_{\omega}\left(\begin{bmatrix}
    0 & 2-\lambda\\
    1+\lambda & 0
\end{bmatrix}\right)=\frac{3}{2}, \,  r_{\omega}(T^2) = 2.\]

That gives   $( r_{\omega}(M_{\lambda}(T)))^2 = \frac{9}{4} > 2= r_{\omega}(T^2) $.

\end{example}

Recall the tensor product $H_1\otimes H_2$ of Hilbert spaces $H_1$ and $H_2$ is the completion of the inner product space of elements of the form $\sum_{i=1}^{n} x_i \otimes y_i$, with $x_i \in H$ and $y_i \in K$, and $n \ge 1$, under the inner product $\langle x \otimes y, u \otimes v \rangle = \langle x, u \rangle \langle y, v \rangle$. 
The tensor product of two operators $A \in B(H_1)$ and $B \in B(H_2)$ is denoted by $A \otimes B$ and is defined on $H_1 \otimes H_2$ by $(A \otimes B)(x \otimes y) = Ax \otimes By$ for $x \otimes y \in H_1 \otimes H_2$.
We can write $A \otimes B$ as the product of $A \otimes I_{H_2}$ and $I_{H_1} \otimes B$, i.e., $A \otimes B = (A \otimes I_{H_2})(I_{H_1} \otimes B)$, where $I_{H_1}$ and $I_{H_2}$ are the identity operators on $H_1$ and $H_2$, respectively. 

It is well-known that $S \otimes T$ is a bounded linear operator acting on $H_1 \otimes H_2$. Furthermore, it satisfies the following properties:
\begin{proposition}
     Let $S, S_1, S_2 \in \mathcal{B}(H_1)$ and $T, T_1, T_2 \in \mathcal{B}(H_2)$. Then:
\begin{enumerate}
    \item $\|S \otimes T\| = \|S\| \|T\|$.
    \item $\gamma \delta (S \otimes T) = \gamma S \otimes \delta T$, for all $\gamma, \delta \in \mathbb{C}$.
    \item $(S_1 + S_2) \otimes (T_1 + T_2) = S_1 \otimes T_1 + S_1 \otimes T_2 + S_2 \otimes T_1 + S_2 \otimes T_2$.
    \item $(S_1 \otimes T_1)(S_2 \otimes T_2) = S_1 S_2 \otimes T_1 T_2$. In particular, we have:
    
   $ S \otimes T = (S \otimes I_K)(I_H \otimes T  ) = (I_H \otimes T)(S \otimes I_K).$
\item $(S \otimes T)^* = S^* \otimes T^*$.
    \end{enumerate}
\end{proposition}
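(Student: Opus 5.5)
The plan is to verify each property first on elementary tensors $x\otimes y$, extend by linearity to the algebraic tensor product $H_1\odot H_2$ (finite sums $\sum x_i\otimes y_i$), and then pass to all of $H_1\otimes H_2$ by density and continuity. The construction is: define $S\otimes T$ on $H_1\odot H_2$ by $\sum x_i\otimes y_i\mapsto \sum Sx_i\otimes Ty_i$, check it is well defined and bounded there, and extend uniquely. Once boundedness is known, items 2, 3, 4 and 5 are algebraic identities between bounded operators. Each holds on elementary tensors directly from the definition and bilinearity of $\otimes$ on vectors: for instance $(S_1\otimes T_1)(S_2\otimes T_2)(x\otimes y)=S_1S_2x\otimes T_1T_2y$. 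Each then holds on the dense subspace $H_1\odot H_2$ by linearity, and hence everywhere. For item 5, compute $\langle (S\otimes T)(x\otimes y),u\otimes v\rangle=\langle Sx,u\rangle\langle Ty,v\rangle=\langle x\otimes y,(S^*\otimes T^*)(u\otimes v)\rangle$. Sesquilinearity and density then give $(S\otimes T)^*=S^*\otimes T^*$. The factorization in item 4 follows by taking $S_2=I$, $T_1=I$, and the reverse order.

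For item 1, I will use the factorization $S\otimes T=(S\otimes I)(I\otimes T)$, so that it suffices to show $\|S\otimes I\|\le\|S\|$ and $\|I\otimes T\|\le\|T\|$. For a finite sum $z=\sum_i x_i\otimes y_i$, I first orthonormalize the $y_i$, which lets me rewrite $z=\sum_j x'_j\otimes f_j$ with $\{f_j\}$ orthonormal. Then $\|z\|^2=\sum_j\|x'_j\|^2$ and $\|(S\otimes I)z\|^2=\sum_j\|Sx'_j\|^2\le\|S\|^2\|z\|^2$. This single computation gives both well-definedness (the image does not depend on the representation, since it is expressed through this canonical form) and boundedness on the dense subspace; the same argument with the roles of the factors swapped handles $I\otimes T$. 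This yields $\|S\otimes T\|\le\|S\|\|T\|$. For the reverse inequality, I take unit vectors $x,y$ and use $\|(S\otimes T)(x\otimes y)\|=\|Sx\|\|Ty\|$ together with $\|x\otimes y\|=1$, then take the supremum over $x$ and $y$.

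I expect the main obstacle to be this well-definedness and boundedness step on the algebraic tensor product, since all other items reduce to checks on elementary tensors. The orthonormalization trick above is what I will rely on to get past it.
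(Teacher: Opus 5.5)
Your proof is correct. There is no proof in the paper to compare it with: the paper introduces this proposition with ``It is well-known that \dots'' and lists the five properties without argument, treating them as standard facts about Hilbert space tensor products. What you give is the standard textbook argument. You define the operator on the algebraic tensor product, bound $S\otimes I$ and $I\otimes T$ by orthonormalizing one factor, check the algebraic identities on elementary tensors, and extend by density. Your write-up therefore makes self-contained a statement the paper only cites.

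One step needs a more careful justification. The rewriting $\sum_j x'_j\otimes f_j$ depends on the chosen representation and on the orthonormal basis, so it is not a canonical form, and well-definedness does not follow from its being ``canonical.'' What your computation actually proves is
$\bigl\|\sum_i Sx_i\otimes y_i\bigr\|\le\|S\|\,\bigl\|\sum_i x_i\otimes y_i\bigr\|$
for every finite family $(x_i,y_i)$. Now take two representations $\sum_i x_i\otimes y_i$ and $\sum_k u_k\otimes v_k$ of the same vector. Apply the inequality to the combined family $(x_i,y_i)$, $(-u_k,v_k)$, which represents $0$. This shows the two images coincide, which is the well-definedness you need. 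The rest of the argument goes through as you describe.
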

Suppose $S \in \mathcal{B}(\mathcal{H}_1)$ and $T \in \mathcal{B}(\mathcal{H}_2)$ with polar decompositions $S= U|S|,$ $T = V|T|.$ The polar decomposition and the Duggal transform of $S \otimes T$ is, respectively

$S \otimes T =U|S| \otimes V|T|  = (U \otimes V) (|S| \otimes |T|) =(U \otimes V) |S \otimes T|,$
and $(S \otimes T)^D = |S \otimes T| (U \otimes V)= (|S| \otimes |T|) (U \otimes V) = |S|U \otimes |T|V = S^D \otimes T^D.$

So, the $\lambda$-mean transform of $S \otimes T$ is
$M_\lambda(S \otimes T) = \lambda (S \otimes T) + (1 - \lambda) (S \otimes T)^D = \lambda (S \otimes T) + (1 - \lambda) (S^D \otimes T^D).$
Let $S \in \mathcal{B}(\mathcal{H}_1)$ and $T \in \mathcal{B}(\mathcal{H}_2)$. Then,
$M_{\lambda}(S \otimes T)$ acts on the tensor product Hilbert space $\mathcal{H}_1 \otimes \mathcal{H}_2$ and
$M_{\lambda}(T \otimes S)$ acts on the tensor product Hilbert space $\mathcal{H}_2 \otimes \mathcal{H}_1.$
 Since tensor product multiplication is non-commutative ($A \otimes B \neq B \otimes A$ in general), so $M_{\lambda}(S \otimes T) \neq M_{\lambda}(T \otimes S)$ for $\lambda \in (0,1)$.
 We consider the following example:
\[
S = \begin{pmatrix} 1 & 0 \\ 0 & 0 \end{pmatrix} \quad \text{and} \quad T = \begin{pmatrix} 0 & 1 \\ 0 & 0 \end{pmatrix}.
\]
$S$ is positive semi-definite, so $U = S$, $|S| = S$.
$T$ has canonical polar decomposition $T = V |T|$ with $V = \begin{pmatrix} 0 & 1 \\ 0 & 0 \end{pmatrix}$ and $|T| = \begin{pmatrix} 0 & 0 \\ 0 & 1 \end{pmatrix}$. Now,
\begin{align*}
M_{\frac{1}{2}}(S \otimes T) = \frac{1}{2} \begin{pmatrix} 0 & 1 & 0 & 0 \\ 0 & 0 & 0 & 0 \\ 0 & 0 & 0 & 0 \\ 0 & 0 & 0 & 0 \end{pmatrix}\quad \text{and} \quad
M_{\frac{1}{2}}(T \otimes S) = \frac{1}{2} \begin{pmatrix} 0 & 0 & 1 & 0 \\ 0 & 0 & 0 & 0 \\ 0 & 0 & 0 & 0 \\ 0 & 0 & 0 & 0 \end{pmatrix}.
\end{align*}
 It is clear that $M_{\frac{1}{2}}(S \otimes T) \neq M_{\frac{1}{2}}(T \otimes S)$.

Let $T$ and $S$ be bounded linear operators on complex Hilbert spaces $\mathcal{H}_1$ and $\mathcal{H}_2$, respectively. Then $S$ is said to dilate $T$ if $T=U^*SU$ for some isometry $U:\mathcal{H}_1\to\mathcal{H}_2$.

\begin{proposition} \label{proposition 8}
  Let $S \in \mathcal{B}(\mathcal{H}_1)$ and $T \in \mathcal{B}(\mathcal{H}_2).$  Then $M_{\lambda}(T \otimes S)$ and $M_{\lambda}(S \otimes T)$ are unitarily equivalent dilations of each other.
\end{proposition}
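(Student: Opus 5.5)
The plan is to use the flip (swap) unitary $W:\mathcal{H}_1\otimes\mathcal{H}_2\to\mathcal{H}_2\otimes\mathcal{H}_1$, determined on elementary tensors by $W(x\otimes y)=y\otimes x$ and extended by linearity and continuity. I will show that $W$ intertwines each ingredient of the $\lambda$-mean transform. A unitary is in particular an isometry, so unitary equivalence in both directions gives the dilation statement at once: if $M_\lambda(S\otimes T)=W^*M_\lambda(T\otimes S)W$, then each operator is a (unitary, hence isometric) compression of the other.

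First I check that $W$ is unitary. It preserves inner products on elementary tensors, since $\langle W(x\otimes y),W(u\otimes v)\rangle=\langle y,v\rangle\langle x,u\rangle=\langle x\otimes y,u\otimes v\rangle$. By linearity it is isometric on the algebraic tensor product, so it extends to an isometry. Its range contains all elementary tensors of $\mathcal{H}_2\otimes\mathcal{H}_1$, so it is onto, and $W^*=W^{-1}$ is the reverse flip.

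Next I verify $W(A\otimes B)W^*=B\otimes A$ for all $A\in\mathcal{B}(\mathcal{H}_1)$ and $B\in\mathcal{B}(\mathcal{H}_2)$. On elementary tensors $y\otimes x$ this is the direct computation $W(A\otimes B)W^*(y\otimes x)=W(Ax\otimes By)=By\otimes Ax$. Boundedness and density then extend it to the whole space.

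With this identity in hand:
\begin{itemize}
\item applying it with $A=S$, $B=T$ gives $W(S\otimes T)W^*=T\otimes S$;
\item applying it with $A=S^D$, $B=T^D$, and using the formula $(S\otimes T)^D=S^D\otimes T^D$ established just before the statement (and likewise $(T\otimes S)^D=T^D\otimes S^D$), gives $W(S\otimes T)^DW^*=(T\otimes S)^D$.
\end{itemize}
Taking the combination $\lambda(\cdot)+(1-\lambda)(\cdot)$ then yields $WM_\lambda(S\otimes T)W^*=M_\lambda(T\otimes S)$ for every $\lambda\in[0,1]$.

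The main obstacle is making sure the Duggal formula for tensor products is legitimate, that is, that $(U\otimes V)(|S|\otimes|T|)$ really is the canonical polar decomposition of $S\otimes T$. For this I will check three things:
\begin{itemize}
\item $|S\otimes T|=|S|\otimes|T|$, which follows from uniqueness of positive square roots, since $(|S|\otimes|T|)^2=S^*S\otimes T^*T$;
\item $U\otimes V$ is a partial isometry;
\item $\ker(U\otimes V)=\ker(|S|\otimes|T|)$. I will obtain this by showing that the initial projection $U^*U\otimes V^*V$ equals the range projection of $|S|\otimes|T|$, which is the tensor product of the range projections of $|S|$ and $|T|$.
\end{itemize}
Once this is done, the rest of the argument is purely algebraic.
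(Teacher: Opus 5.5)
Your proposal is correct and follows essentially the same route as the paper: conjugate by the flip unitary $W(x\otimes y)=y\otimes x$, and use $(S\otimes T)^D=S^D\otimes T^D$ to obtain $WM_\lambda(S\otimes T)W^*=M_\lambda(T\otimes S)$. Your additional checks fill in points the paper takes for granted: that $(U\otimes V)(|S|\otimes|T|)$ is the canonical polar decomposition, that the flip is onto, and that unitary conjugation in both directions yields the two-way dilation. Your unitarity argument is also cleaner than the paper's ``$V^2=I$'', which conflates the flip with its reverse.
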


\begin{proof} Let us define the canonical flip operator $V : H_1 \otimes H_2 \to H_2 \otimes H_1$ by $V(x \otimes y) = y \otimes x$.
Now, 
$V(V(x \otimes y)) = V(y \otimes x) = x \otimes y \implies V^2 = I.$

In a tensor product Hilbert space, the inner product of two elementary tensors is the product of their individual inner products, i.e.
$\langle x_1 \otimes x_2, y_1 \otimes y_2 \rangle = \langle x_1, y_1 \rangle \langle x_2, y_2 \rangle.$

Apply the flip operator:
\begin{align*}
\langle V(x_1 \otimes x_2), V(y_1 \otimes y_2) \rangle = \langle x_2 \otimes x_1, y_2 \otimes y_1 \rangle 
= \langle x_2, y_2 \rangle \langle x_1, y_1 \rangle 
\end{align*}

Because these are just complex scalars, their multiplication commutes: 
\begin{align*}
\langle V^* V (x_1 \otimes x_2), y_1 \otimes y_2 \rangle&=\langle x_1, y_1 \rangle \langle x_2, y_2 \rangle  = \langle x_1 \otimes x_2, y_1 \otimes y_2 \rangle\\&\implies V^* V = I.\end{align*}

Since $V^* V = I$ and $V^2 = I \implies V^* = V^{-1} = V$.
Now,
\begin{align*}
V M_\lambda(S \otimes T) V^* (x \otimes y) &= V M_\lambda(S \otimes T) (y \otimes x) \\
&= V \left( \lambda (S \otimes T)(y \otimes x) + (1-\lambda) (S^D \otimes T^D)(y \otimes x) \right) \\
&= \lambda V(Sy \otimes Tx) + (1-\lambda) V(S^D y \otimes T^D x) \\
&= \lambda (Tx \otimes Sy) + (1-\lambda) (T^D x \otimes S^D y) \\
&= \lambda (T \otimes S)(x \otimes y) + (1-\lambda) (T^D \otimes S^D)(x \otimes y) \\
&= \left( \lambda (T \otimes S) + (1-\lambda) (T^D \otimes S^D) \right)(x \otimes y)= M_\lambda(T \otimes S)(x \otimes y)
\end{align*}
That gives 
 $V M_\lambda(S \otimes T) V^* = M_\lambda(T \otimes S).$
 
By multiplying $V^*$ on the left and $V$ on the right, we get:
\begin{align*}
M_{\lambda}(S \otimes T) = V^* M_{\lambda}(T \otimes S) V.\end{align*}
\end{proof}

We shall commence with the subsequent properties of the $\lambda-$mean transformation of $T$ and $T \otimes S.$
\begin{lemma}\label{Lemma11}
 Let $S \in \mathcal{B}(\mathcal{H}_1)$ and $T \in \mathcal{B}(\mathcal{H}_2).$ Then the following properties hold: 
 \begin{enumerate}
     \item $M_{\lambda}(cS)=cM_{\lambda}(S)$ for every complex number $c$.
     \item $M_{\lambda}(USU^*)=M_{\lambda}(S)$ for every unitary operator $U$.
     \item $r_{\sigma}(M_{\lambda}(T \otimes S))=r_{\sigma}(M_{\lambda}(S \otimes T)).$
     \item $r_{\omega}(M_{\lambda}(T \otimes S))=r_{\omega}(M_{\lambda}(S \otimes T)).$
     \item $\|M_{\lambda}(T \otimes S))\|=\|M_{\lambda}(S \otimes T))\|.$
 \end{enumerate}
\end{lemma}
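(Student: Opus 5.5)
The plan is to reduce every item to two facts: how the canonical polar decomposition behaves under scaling and unitary conjugation, and the unitary equivalence already proved in Proposition \ref{proposition 8}. Throughout, I use that the canonical polar decomposition $S=W|S|$ is \emph{unique} once we require $W$ to be a partial isometry with $\ker W=\ker |S|$. So to identify the Duggal transform of a modified operator, it suffices to exhibit a factorization of this form.

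For item 1, the case $c=0$ is trivial, since both sides vanish. For $c=|c|e^{i\theta}\neq 0$ and $S=W|S|$, I would write
\[
cS=(e^{i\theta}W)(|c|\,|S|).
\]
Here $|c||S|=|cS|$, $e^{i\theta}W$ is a partial isometry, and $\ker(e^{i\theta}W)=\ker W=\ker|S|=\ker(|c||S|)$. By uniqueness this is the canonical decomposition, so
\[
(cS)^D=|c||S|\,e^{i\theta}W=c\,S^D .
\]
Linearity of $M_\lambda$ in the pair $(T,T^D)$ then gives $M_\lambda(cS)=cM_\lambda(S)$.

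For item 2, I would proceed the same way. Write $USU^*=(UWU^*)(U|S|U^*)$. Here $U|S|U^*=|USU^*|$, because $(USU^*)^*(USU^*)=U|S|^2U^*$ and the square root commutes with unitary conjugation. Also $UWU^*$ is a partial isometry with kernel $U\ker W=U\ker|S|=\ker(U|S|U^*)$. Uniqueness then gives
\[
(USU^*)^D=U|S|WU^*=US^DU^*,
\]
and hence $M_\lambda(USU^*)=U M_\lambda(S)U^*$. I expect this to be the main obstacle. The computation only yields equality up to conjugation by $U$, so the literal identity $M_\lambda(USU^*)=M_\lambda(S)$ cannot hold in general; for example, take $S$ non-normal and $U$ a rotation that moves it. I would therefore prove and state the item in the corrected form $M_\lambda(USU^*)=UM_\lambda(S)U^*$, which is clearly what is intended, namely that $M_\lambda$ commutes with unitary similarity.

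For items 3--5, I would invoke Proposition \ref{proposition 8}, which gives $VM_\lambda(S\otimes T)V^*=M_\lambda(T\otimes S)$ with the flip $V$ unitary. One point should be checked first: that proof uses $(S\otimes T)^D=S^D\otimes T^D$. This requires the factorization $(U_S\otimes U_T)(|S|\otimes|T|)$ to be canonical. I would verify this by noting that
\[
\ker(U_S\otimes U_T)=\bigl(\overline{\mathrm{ran}}\,|S|\otimes\overline{\mathrm{ran}}\,|T|\bigr)^{\perp}=\ker(|S|\otimes|T|).
\]
Both equalities hold because the initial space of $U_S\otimes U_T$ is the tensor product of the initial spaces.

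Once the unitary equivalence is in hand, the three invariants follow directly. The spectrum satisfies $\sigma(VAV^*)=\sigma(A)$, since $VAV^*-\mu I=V(A-\mu I)V^*$; this gives item 3. The numerical range is preserved because $\langle VAV^*x,x\rangle=\langle A(V^*x),V^*x\rangle$ and $V^*$ maps the unit sphere onto the unit sphere; this gives item 4. The norm satisfies $\|VAV^*\|=\|A\|$ since $V$ is an isometric isomorphism; this gives item 5.
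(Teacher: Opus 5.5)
Your proposal is correct and follows the paper's proof essentially step for step: item 1 via $cS=(e^{i\theta}W)(|c|\,|S|)$, item 2 via $|USU^*|=U|S|U^*$ and $(USU^*)^D=US^DU^*$, and items 3--5 via the flip unitary of Proposition~\ref{proposition 8}, with your uniqueness and kernel checks adding rigor that the paper omits. You are also right that item 2 is misstated as written: the paper's own proof concludes $M_\lambda(USU^*)=UM_\lambda(S)U^*$, which is exactly the form you prove and the form the paper later invokes when it cites this lemma.
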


\begin{proof}
  (1) Let $S= U\vert{}S\vert{}$ be the polar decomposition of $S \in \mathcal{B}(\mathcal{H}).$
  Let $c \in \mathbb{C} \setminus \{0\}$ be in polar form $c = \vert{}c\vert{}e^{i\theta}.$
  Then $\vert{}cS\vert{} = \left((cS)^*(cS)\right)^{1/2} = \left(\bar{c}c \, S^*S\right)^{1/2} = \sqrt{\vert{}c\vert{}^2} (S^*S)^{1/2} = \vert{}c\vert{}\vert{}S\vert{}$ and 
  $cS = (e^{i\theta}\vert{}c\vert{})(U\vert{}S\vert{}) = (e^{i\theta}U)(\vert{}c\vert{}\vert{}S\vert{}).$
  
  Substituting $V = e^{i\theta}U$ and $\vert{}cS\vert{} = \vert{}c\vert{}\vert{}S\vert{}$ directly into the transform definition:
  \begin{align*}
  M_\lambda(cS) &= \lambda V \vert{}cS\vert{} + (1-\lambda) \vert{}cS\vert{} V= \lambda (e^{i\theta}U)(\vert{}c\vert{}\vert{}S\vert{}) + (1-\lambda)(\vert{}c\vert{}\vert{}S\vert{})(e^{i\theta}U) \\&= \vert{}c\vert{}e^{i\theta} \Big( \lambda U\vert{}S\vert{} + (1-\lambda)\vert{}S\vert{}U \Big)= c M_\lambda(S).\end{align*}

(2) Let $S = V\vert{}S\vert{}$ be the polar decomposition of $S\in \mathcal{B}(\mathcal{H})$ and $T' = U S U^*.$ Then, 
\begin{align*}
|T'|^2 = (T')^* T' = U S^* U^* U S U^*& = U |S|^2 U^* = (U |S| U^*) (U |S| U^*) = (U |S| U^*)^2\\&
\implies |T'| = U |S| U^*.
\end{align*}

Now,  
\[T' = U S U^* = U V |S| U^* = (U V U^*) U |S| U^*= (U V U^*) |T'| = U_{1} |T'|,\]
where $U_{1} = U V U^*$ is a partial isometry.
We have,
\[(T')^D = |T'| U' = U |S| U^* U V U^* = U S^D U^*.\]  So, 
\begin{align*}M_\lambda (U S U^*) &= \lambda T' + (1-\lambda) (T')^D = \lambda U S U^* + (1-\lambda) U S^D U^* \\&= U \left( \lambda S + (1-\lambda) S^D \right) U^* = U M_\lambda(S) U^*.\end{align*}

(3)  By using Proposition \ref{proposition 8}, we have
    \begin{align*}
    &M_\lambda(T \otimes S) - zI = V M_\lambda(S \otimes T) V^* - z V V^* = V \left( M_\lambda(S \otimes T) - zI \right) V^*\\
    &\implies (M_\lambda(T \otimes S) - zI)^{-1} = V \left( M_\lambda(S \otimes T) - zI \right)^{-1} V^{*}.
    \end{align*}
    Thus, $(M_\lambda(T \otimes S) - zI)$ is invertible if and only if $(M_\lambda(S \otimes T) - zI)$ is invertible. This gives
    \[
    r_{\sigma}(M_\lambda(T \otimes S)) = r_{\sigma}(M_\lambda(S \otimes T)).
    \]

(4) If $\|x\| = 1$ and $V$ is unitary, then $\|Vx\| = \|V^* x\| = 1$. By applying Proposition \ref{proposition 8}, we have \quad$r_{\omega}(M_\lambda(T \otimes S)) = \sup\limits_{\|x\|=1} |\langle M_\lambda(T \otimes S)x, x \rangle|$
  \begin{align*}  
    &\qquad\qquad\qquad\qquad= \sup\limits_{\|x\|=1} |\langle V M_\lambda(S \otimes T) V^* x, x \rangle|
    = \sup\limits_{\|x\|=1} |\langle M_\lambda(S \otimes T) V^* x, V^* x \rangle| 
   \\& \qquad\qquad\qquad\qquad= \sup\limits_{\|y\|=1} |\langle M_\lambda(S \otimes T) y, y \rangle| 
    = r_{\omega}(M_\lambda(S \otimes T)).\end{align*}.

(5)  By applying Proposition \ref{proposition 8}, we have 
    \begin{align*} \|M_\lambda(T \otimes S)\|&= \sup\limits_{\substack{x, y \in {\mathcal{H}}\\ \|x\|=\|y\|=1}} |\langle V M_\lambda(S \otimes T) V^* x, y \rangle|  = \sup\limits_{\substack{x, y \in {\mathcal{H}} \\ \|x\|=\|y\|=1}} |\langle M_\lambda(S \otimes T) V^* x, V^* y \rangle|\\& = \sup\limits_{\substack{z, w \in {\mathcal{H}} \\ \|z\|=\|w\|=1}} |\langle M_\lambda(S \otimes T) z, w \rangle|= \|M_\lambda(S \otimes T)\|.\end{align*}
    
\end{proof}

\begin{theorem} The generalized Aluthge transform satisfies 
\[
\Delta_\lambda(S \otimes T) = \Delta_\lambda(S) \otimes \Delta_\lambda(T)
\]
for all bounded linear operators $S$ and $T$ and for all $\lambda \in [0, 1]$.
\end{theorem}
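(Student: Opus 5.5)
The generalized Aluthge transform has not been defined yet, so I take the standard definition $\Delta_\lambda(T)=|T|^{\lambda}U|T|^{1-\lambda}$ for $T=U|T|$, with $\lambda\in[0,1]$ and the convention $|T|^0=I$. The plan is to reduce the claim to three multiplicativity facts for tensor products, all following from the algebraic rules in the preceding Proposition (items 4 and 5):
\begin{enumerate}
\item the polar decomposition of $S\otimes T$ is $(U\otimes V)(|S|\otimes|T|)$;
\item $|S\otimes T|=|S|\otimes|T|$;
\item $(|S|\otimes|T|)^{p}=|S|^{p}\otimes|T|^{p}$ for every $p\ge 0$.
\end{enumerate}

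For (2), compute $(S\otimes T)^*(S\otimes T)=S^*S\otimes T^*T=|S|^2\otimes|T|^2=(|S|\otimes|T|)^2$. The operator $|S|\otimes|T|$ is positive, since it is the product of the commuting positive operators $|S|\otimes I$ and $I\otimes|T|$. Uniqueness of the positive square root then gives (2).

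For (1), note that $U\otimes V$ is a partial isometry, because $(U\otimes V)^*(U\otimes V)=U^*U\otimes V^*V$ is a tensor product of orthogonal projections and hence a projection. We also need the kernel condition $\ker(U\otimes V)=\ker(|S|\otimes|T|)$. Equivalently, the initial projection $P_S\otimes P_T$ must equal the range projection of $|S|\otimes|T|$, where $P_S$ is the range projection of $|S|$ and $P_T$ that of $|T|$. I would verify this by checking that $\overline{\mathrm{ran}}(A\otimes B)=\overline{\mathrm{ran}}A\otimes\overline{\mathrm{ran}}B$ for positive $A,B$. This step is the main technical point and was glossed over earlier in the paper. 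I would argue it via $\ker(A\otimes B)=(\ker A\otimes\mathcal H_2)\vee(\mathcal H_1\otimes\ker B)$, which holds because $A\otimes B$ is diagonalized by the joint spectral measure of the commuting pair $A\otimes I$ and $I\otimes B$.

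For (3), I would use functional calculus on commuting positive operators. Since $|S|^p\otimes I$ and $I\otimes|T|^p$ commute, $(|S|^p\otimes I)(I\otimes|T|^p)=|S|^p\otimes|T|^p$. Also $(|S|\otimes I)^p=|S|^p\otimes I$, because the map $X\mapsto X\otimes I$ is a unital $*$-homomorphism and therefore commutes with continuous functional calculus. Finally, for commuting positive $A,B$ we have $(AB)^p=A^pB^p$, via the joint spectral theorem or by approximating $t^p$ by polynomials on a compact interval. The endpoints $p=0$ need the convention $|T|^0=I$, and that convention is consistent on both sides.

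Putting these together:
\[
\Delta_\lambda(S\otimes T)=(|S|^\lambda\otimes|T|^\lambda)(U\otimes V)(|S|^{1-\lambda}\otimes|T|^{1-\lambda})=|S|^\lambda U|S|^{1-\lambda}\otimes |T|^\lambda V|T|^{1-\lambda},
\]
where the second equality is the mixed-product rule (item 4). The right-hand side is $\Delta_\lambda(S)\otimes\Delta_\lambda(T)$.

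The main obstacle is justifying (1), the kernel condition that makes $U\otimes V$ the canonical partial isometry. Without it, $\Delta_\lambda(S\otimes T)$ would depend on a non-canonical choice. However, the value $|S\otimes T|^\lambda W|S\otimes T|^{1-\lambda}$ is the same for any partial isometry $W$ with $S\otimes T=W|S\otimes T|$ whenever $\lambda<1$: only the action of $W$ on $\overline{\mathrm{ran}}\,|S\otimes T|$ matters, and there $W$ is determined by $S\otimes T$. So I would fall back on this observation if the range computation becomes delicate, treating $\lambda=1$ separately. For $\lambda=1$, $\Delta_1(X)=|X|U$ is the Duggal transform, which the paper has already shown to be multiplicative, $(S\otimes T)^D=S^D\otimes T^D$.
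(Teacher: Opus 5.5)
Your proposal is correct and follows the same route as the paper. Both identify the polar decomposition of $S\otimes T$ as $(U\otimes V)(|S|\otimes|T|)$, use $|S\otimes T|^{\alpha}=|S|^{\alpha}\otimes|T|^{\alpha}$, and conclude with the mixed-product rule. The paper only asserts the first two facts, and your range-projection argument for $\ker(U\otimes V)=\ker(|S|\otimes|T|)$ and your functional-calculus treatment of the powers supply the missing rigor. One small caveat: your $\lambda=1$ fallback is not independent, because the paper's identity $(S\otimes T)^D=S^D\otimes T^D$ relies on the same identification $U_{S\otimes T}=U\otimes V$, and at $\lambda=1$ the choice of partial isometry does matter; since your main argument already proves that identification, nothing is lost.
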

\begin{proof} If $S = U|S|$ and $T = V|T|$ are the polar decompositions of $S$ and $T$, respectively, then
$S \otimes T=U|S| \otimes V|T|= (U \otimes V) (|S| \otimes |T|).$

The partial isometry of $S \otimes T$ is $U_{S \otimes T} = U \otimes V$.
The modulus of $S \otimes T$ is $|S \otimes T| = |S| \otimes |T|$.

For any continuous function $f$ (and specifically for power functions $x \to x^\alpha$), we have 
$|S \otimes T|^\alpha = |S|^\alpha \otimes |T|^\alpha.$

Substituting these components into the definition of $\Delta_\lambda(S \otimes T)$, we get

\begin{align*}\Delta_\lambda(S \otimes T) &= |S \otimes T|^\lambda U_{S \otimes T} |S \otimes T|^{1-\lambda} = \left(|S|^\lambda \otimes |T|^\lambda\right) (U \otimes V) \left(|S|^{1-\lambda} \otimes |T|^{1-\lambda}\right) \\&= \left(|S|^\lambda U |S|^{1-\lambda}\right) \otimes \left(|T|^\lambda V |T|^{1-\lambda}\right) = \Delta_\lambda(S) \otimes \Delta_\lambda(T).\end{align*}

\end{proof}

An operator $T \in B(H)$ is classified as normal if it satisfies the condition $TT^* = T^*T$, and it is deemed quasinormal if it commutes with $T^*T.$ It is evident that $\text{normal} \Rightarrow \text{quasinormal}$, but the converse is not valid. It is widely recognised that an operator $T$ is quasinormal if and only if $T=T^D$.

It is clear that $M_{\lambda}(S \otimes T) =S \otimes T= M_{\lambda}(S) \otimes M_{\lambda}(T)$ when $\lambda=1.$ Also, for $\lambda=0,$ we have $M_{\lambda}(S \otimes T) =S^{D} \otimes T^{D}= M_{\lambda}(S) \otimes M_{\lambda}(T).$ 

Furthermore, for any $\lambda \in (0,1)$, $M_{\lambda}(S \otimes T) \neq M_{\lambda}(S) \otimes M_{\lambda}(T),$ in general.

\begin{theorem} Let $S, T \in \mathcal{B}(\mathcal{H})$ and let $\lambda \in (0,1)$. Then,$$M_{\lambda}(S \otimes T) = M_{\lambda}(S) \otimes M_{\lambda}(T)$$if and only if at least one of the following holds:

\begin{enumerate}
    \item[(i)] $S$ is quasinormal; or
    \item[(ii)] $T$ is quasinormal.
\end{enumerate}

\end{theorem}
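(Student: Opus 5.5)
The plan is to expand both sides with the bilinearity rules for tensor products from the Proposition on properties of $S\otimes T$, and to reduce the identity to the vanishing of a single elementary tensor. Let $S=U|S|$ and $T=V|T|$ be the polar decompositions. As computed just before Proposition \ref{proposition 8}, $(S\otimes T)^D=S^D\otimes T^D$, so
\[
M_\lambda(S\otimes T)=\lambda\,(S\otimes T)+(1-\lambda)\,(S^D\otimes T^D).
\]
The right-hand side $M_\lambda(S)\otimes M_\lambda(T)=(\lambda S+(1-\lambda)S^D)\otimes(\lambda T+(1-\lambda)T^D)$ expands by items (2) and (3) of that Proposition to
\[
\lambda^2\, S\otimes T+\lambda(1-\lambda)\,(S\otimes T^D+S^D\otimes T)+(1-\lambda)^2\, S^D\otimes T^D .
\]

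Subtracting the two expressions and using $\lambda-\lambda^2=\lambda(1-\lambda)$ and $(1-\lambda)-(1-\lambda)^2=\lambda(1-\lambda)$, I expect to obtain the factorization
\[
M_\lambda(S\otimes T)-M_\lambda(S)\otimes M_\lambda(T)=\lambda(1-\lambda)\,(S-S^D)\otimes(T-T^D).
\]
To verify it, one expands the right side again by bilinearity and compares the four cross terms with the expansion above.

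Since $\lambda\in(0,1)$, the factor $\lambda(1-\lambda)$ is nonzero. Hence the identity holds if and only if $(S-S^D)\otimes(T-T^D)=0$. By item (1) of the Proposition, $\|A\otimes B\|=\|A\|\,\|B\|$, so $A\otimes B=0$ exactly when $A=0$ or $B=0$. Thus the identity is equivalent to $S=S^D$ or $T=T^D$.

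Finally, I invoke the recalled fact that an operator is quasinormal if and only if it equals its Duggal transform. Concretely, $S=S^D$ means $U|S|=|S|U$, and this is equivalent to $S$ commuting with $|S|^2=S^*S$. This gives (i) or (ii), and both directions of the equivalence come out of the same chain of equivalences.

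The only step needing care is the algebraic factorization, and that is routine. The main subtle point is justifying that a tensor product of two operators vanishes only when one factor does, which the norm identity handles. I would also note that the quasinormal characterization uses the canonical polar decomposition, as the paper does throughout.
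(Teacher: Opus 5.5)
Your proposal is correct and is essentially the paper's proof. The paper likewise expands both sides using $(S\otimes T)^D=S^D\otimes T^D$ and obtains $M_\lambda(S\otimes T)-M_\lambda(S)\otimes M_\lambda(T)=\lambda(1-\lambda)(U|S|-|S|U)\otimes(V|T|-|T|V)$, which is exactly your $(S-S^D)\otimes(T-T^D)$. It then concludes from $X\otimes Y=0$ iff $X=0$ or $Y=0$ together with the characterization that quasinormality is equivalent to $U|S|=|S|U$; your use of $\|X\otimes Y\|=\|X\|\,\|Y\|$ to justify that vanishing criterion, and your two-sided check of the quasinormality characterization, fill in steps the paper only asserts.
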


\begin{proof} Let $S = U|S|$ and $T = V|T|$ be the polar decomposition of $S$ and $T$, respectively.

The canonical polar decomposition of the tensor product $S \otimes T$ is given by $(U \otimes V)(|S| \otimes |T|)$.

Applying the definition of the $\lambda$-mean transform $M_{\lambda}(A) = \lambda U |A| + (1-\lambda) |A| U$, we have
\begin{align*}
M_{\lambda}(S \otimes T) = \lambda (U \otimes V)(|S| \otimes |T|) + (1-\lambda) (|S| \otimes |T|)(U \otimes V).\end{align*}

Using the property $(A \otimes B)(C \otimes D) = AC \otimes BD$, we get
\begin{align*}
M_{\lambda}(S \otimes T) = \lambda (U|S| \otimes V|T|) + (1-\lambda) (|S|U \otimes |T|V).\end{align*}

Expanding the tensor product of the individual transforms, we infer that
\begin{align*}
M_{\lambda}(S) \otimes M_{\lambda}(T)& = \big(\lambda U|S| + (1-\lambda)|S|U\big) \otimes \big(\lambda V|T| + (1-\lambda)|T|V\big) \\
&= \lambda^2 (U|S| \otimes V|T|) + \lambda(1-\lambda)(U|S| \otimes |T|V)  + \\&\lambda(1-\lambda)(|S|U \otimes V|T|) + (1-\lambda)^2 (|S|U \otimes |T|V).
\end{align*}

Subtracting $M_{\lambda}(S) \otimes M_{\lambda}(T)$ from $M_{\lambda}(S \otimes T)$, we get
\begin{align*}
&M_{\lambda}(S \otimes T) - M_{\lambda}(S) \otimes M_{\lambda}(T) 
\\&= \lambda(1-\lambda) \big[ (U|S| \otimes V|T|) + (|S|U \otimes |T|V) - (U|S| \otimes |T|V) - (|S|U \otimes V|T|) \big] \\
&= \lambda(1-\lambda) (U|S| - |S|U) \otimes (V|T| - |T|V).
\end{align*}

For $\lambda \in (0,1)$, $\lambda(1-\lambda) \neq 0$, and $X \otimes Y = 0$ if and only if $X = 0$ or $Y = 0$.

Therefore, the equality holds if and only if
\[
U|S| - |S|U = 0 \quad \text{or} \quad V|T| - |T|V = 0.
\]

Equivalently, $U|S| = |S|U$ or $V|T| = |T|V$.
This condition is satisfied whenever $S$ or $T$ is quasinormal.

\end{proof}

\begin{remark}
$M_\lambda(S \otimes T)  = M_\lambda(T) \otimes M_\lambda(S)$ is false in general, even when both $S$ and $T$ are quasinormal. Indeed, $S \otimes T \neq T \otimes S,$ in general, and the quasinormality of $S$ and $T$ gives $M_\lambda(S \otimes T)  =S \otimes T$ and  $ M_\lambda(T) \otimes M_\lambda(S)= T \otimes S.$ 
\end{remark}

It is well known that all self-adjoint operators are normal, whereas the reverse is not always applicable. The subsequent theorem illustrates that the condition $M_\lambda(T) = M_\lambda(T^*)$ acts as a precise requirement for a normal operator $T$ to qualify as self-adjoint.
\begin{theorem}Let $T \in \mathcal{B}(\mathcal{H})$. Then the following statements are equivalent:
\begin{enumerate}
    \item[(a)] $T$ is normal and $M_\lambda(T) = M_\lambda(T^*)$.
    \item[(b)] $T$ is self-adjoint.
\end{enumerate}
\end{theorem}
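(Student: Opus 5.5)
The key observation is that a normal operator is quasinormal. As recalled just before the statement, an operator is quasinormal if and only if it equals its Duggal transform, so $T^D=T$ and hence $M_\lambda(T)=\lambda T+(1-\lambda)T^D=T$ for every $\lambda\in[0,1]$. Since $T$ is normal, $T^*$ is normal as well, so the same reasoning gives $(T^*)^D=T^*$ and $M_\lambda(T^*)=T^*$. The whole theorem therefore reduces to these two identities; I would record them as a preliminary step.

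For (b)$\Rightarrow$(a), suppose $T=T^*$. Then $T$ is trivially normal, since $TT^*=T^2=T^*T$. Moreover $M_\lambda(T^*)=M_\lambda(T)$ holds simply because $T^*$ and $T$ are the same operator, so their polar decompositions, and hence their Duggal transforms, coincide. No further computation is needed.

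For (a)$\Rightarrow$(b), assume $T$ is normal and $M_\lambda(T)=M_\lambda(T^*)$. By the preliminary step, the left side equals $T$ and the right side equals $T^*$, so $T=T^*$.

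The only point needing care is justifying that $T^D=T$ for normal $T$ without relying on the quoted characterization of quasinormality. Take the polar decomposition $T=U|T|$ with $\ker U=\ker|T|$. Normality gives $|T|=|T^*|$. Using $|T^*|=U|T|U^*$ we get $U|T|=|T^*|U=|T|U$, so $T^D=|T|U=U|T|=T$. For $T^*$, its polar decomposition is $T^*=U^*|T^*|=U^*|T|$, and taking adjoints of $U|T|=|T|U$ gives $U^*|T|=|T|U^*$, so $(T^*)^D=|T^*|U^*=|T|U^*=T^*$. The statement does not restrict $\lambda$, so I would read it for a fixed $\lambda\in[0,1]$. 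The argument above works for every $\lambda$, including $\lambda=1$, because it never needs to cancel the factor $1-\lambda$.
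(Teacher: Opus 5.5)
Your proof is correct and follows the paper's own argument: $T$ and $T^*$ are both normal, hence quasinormal, so $M_\lambda(T)=T$ and $M_\lambda(T^*)=T^*$, and the equivalence is then immediate. Your direct check that $T^D=T$ and $(T^*)^D=T^*$, using $|T|=|T^*|$ and $U|T|=|T^*|U$, just spells out the quasinormality fact that the paper quotes without proof.
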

\begin{proof}
$(b) \implies (a):$ It is obvious.

$(a) \implies (b):$  We know $\text{normal} \subset \text{quasinormal}$.
 $T$ is normal $\implies T, T^*$ both are normal
\begin{align*}
&\implies T, T^* \text{ both are quasinormal} \\
&\implies M_\lambda(T) = T \text{ and } M_\lambda(T^*) = T^*.
\end{align*} Now,
\begin{align*}
  &M_\lambda(T) = M_\lambda(T^*) \implies T = T^* \implies T \text{ is self-adjoint.}
\end{align*}

\end{proof}

A helpful bound for sums of double operator products in $\mathcal{B}(\mathcal{H}))$ is given by the following theorem.
\begin{theorem}\label{Theorem3}

Let $A_i,B_i,X_i \in {\mathcal{B}(\mathcal{H})}$ for $i=1,2,\cdots,n.$    Then the following inequality holds:
\begin{align}\label{25}
\left\|\sum\limits_{i=1}^{n}A_iX_iB_i\right\|
\le
\frac{\max\limits_{1\le i \le n}\|X_i\|}{2} \left(\left\|\sum\limits_{i=1}^{n}A_iB_i\right\|+ \left\|\sum\limits_{i=1}^nA_iA^*_{i}\right\|^{\frac{1}{2}}\;\left\|\sum\limits_{i=1}^nB^*_{i}B_{i}\right\|^{\frac{1}{2}}\right).
\tag{25}
\end{align}
\end{theorem}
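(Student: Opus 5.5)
The plan is to reduce the statement to the single-term Buzano-type estimate (\ref{18}) of Theorem \ref{Theorem 1}, using $n\times n$ operator matrices on $\mathcal{H}^{(n)}=\mathcal{H}\oplus\cdots\oplus\mathcal{H}$. I will assume throughout that each $X_i$ is positive, as in Theorem \ref{Theorem 1}. This hypothesis seems to be intended and is genuinely needed. For instance, with $n=1$, $A_1=B_1=N$ where $N=e_1\otimes e_2$ (so $N^2=0$ and $\|N\|=1$), and $X_1$ the swap of $e_1,e_2$, the left side equals $1$ while the right side equals $\tfrac12$.

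First I define the block operators on $\mathcal{H}^{(n)}$:
\[
\mathbb{A}=\begin{bmatrix} A_1 & A_2 & \cdots & A_n\\ O & O & \cdots & O\\ \vdots & & & \vdots\\ O & O&\cdots & O\end{bmatrix},\qquad
\mathbb{B}=\begin{bmatrix} B_1 & O & \cdots & O\\ B_2 & O & \cdots & O\\ \vdots & & & \vdots\\ B_n & O&\cdots & O\end{bmatrix},\qquad
\mathbb{X}=\mathrm{diag}(X_1,\dots,X_n).
\]
Then $\mathbb{X}\ge 0$ and $\|\mathbb{X}\|=\max_i\|X_i\|$. A direct multiplication gives the following four block forms, each with a single nonzero entry in the $(1,1)$ position:
\begin{itemize}
\item $\mathbb{A}\mathbb{X}\mathbb{B}$ has $(1,1)$ entry $\sum_i A_iX_iB_i$;
\item $\mathbb{A}\mathbb{B}$ has $(1,1)$ entry $\sum_i A_iB_i$;
\item $\mathbb{A}\mathbb{A}^*$ has $(1,1)$ entry $\sum_i A_iA_i^*$;
\item $\mathbb{B}^*\mathbb{B}$ has $(1,1)$ entry $\sum_i B_i^*B_i$.
\end{itemize}
Hence the norms of these block operators coincide with the norms of the corresponding sums.

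Next I apply (\ref{18}) to the triple $\mathbb{A},\mathbb{X},\mathbb{B}$. For unit vectors $x,y\in\mathcal{H}^{(n)}$ this gives
\[
|\langle \mathbb{A}\mathbb{X}\mathbb{B}x,y\rangle|\le \frac{\|\mathbb{X}\|}{2}\left(|\langle\mathbb{A}\mathbb{B}x,y\rangle|+\|\mathbb{B}x\|\,\|\mathbb{A}^*y\|\right).
\]
I then bound the terms on the right as follows:
\begin{itemize}
\item $|\langle\mathbb{A}\mathbb{B}x,y\rangle|\le\|\mathbb{A}\mathbb{B}\|$;
\item $\|\mathbb{B}x\|^2=\langle\mathbb{B}^*\mathbb{B}x,x\rangle\le\|\mathbb{B}^*\mathbb{B}\|$;
\item $\|\mathbb{A}^*y\|^2=\langle\mathbb{A}\mathbb{A}^*y,y\rangle\le\|\mathbb{A}\mathbb{A}^*\|$.
\end{itemize}
Taking the supremum over unit $x,y$ and translating back through the block identities above yields (\ref{25}).

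The only step needing care is the bookkeeping of the block identities, which is routine matrix multiplication. Alternatively, one could work with the row operator $\mathcal{H}^{(n)}\to\mathcal{H}$ and the column operator $\mathcal{H}\to\mathcal{H}^{(n)}$ directly, since the proof of Theorem \ref{Theorem 1} only uses the spectral bound on $\mathbb{X}$ and Cauchy--Schwarz. The zero-padding to square operator matrices is chosen precisely so that Theorem \ref{Theorem 1} applies verbatim on the single space $\mathcal{H}^{(n)}$.
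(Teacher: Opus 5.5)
Your proof is correct, and you are right about positivity. Without $X_i\ge 0$ the statement as printed is false: your example with $N=e_1\otimes e_2$ and the swap works for every $n$ after padding with zero terms. The paper's own proof also uses positivity tacitly, since it applies (22), which is stated only for positive $X$, to $\widehat{X}=\mathrm{diag}(X_1,\dots,X_n)$.

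\textbf{Where the routes differ.} The difference lies in the tool used to extract the norm.
\begin{itemize}
\item The paper places $\sum_i A_iX_iB_i$ in the $(1,n)$ corner (its $\widehat{B}$ carries the $B_i$ in the last column). Then $\widehat{A}\widehat{X}\widehat{B}$ and $\widehat{A}\widehat{B}$ square to zero and have numerical radius equal to half their norm. Also $\widehat{B}\widehat{A}=0$, which kills the $\|BA\|$ term. It then applies the numerical-radius bound (22) at $\lambda=\tfrac12$, and the factors $\tfrac12$ cancel.
\item You put everything in the $(1,1)$ corner and apply the two-vector estimate (18), with independent suprema over $x$ and $y$. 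With your placement $\mathbb{B}\mathbb{A}\neq 0$ in general, so (22) would not have served you; (18) has no such term.
\end{itemize}

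\textbf{What your route buys.}
\begin{itemize}
\item It is shorter and rests only on (18), which the paper proves in full; the proof of (22) is omitted.
\item It works verbatim for $n=1$. The paper's nilpotency device fails there, because $A_1X_1B_1$ need not square to zero and $\widehat{B}\widehat{A}=B_1A_1$.
\item Taking $x,y$ supported in the first coordinate, it also gives a pointwise refinement valid for all $x,y\in\mathcal{H}$:
\[
\Big|\Big\langle \sum_i A_iX_iB_ix,y\Big\rangle\Big|\le \frac{\max_i\|X_i\|}{2}\Big(\Big|\Big\langle \sum_i A_iB_ix,y\Big\rangle\Big|+\Big\langle \sum_i B_i^*B_ix,x\Big\rangle^{1/2}\Big\langle \sum_i A_iA_i^*y,y\Big\rangle^{1/2}\Big).
\]
\end{itemize}

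The paper's route mainly buys coherence with its numerical-radius theme. It exhibits (25) as a consequence of (22) via the device $T^2=0\Rightarrow r_{\omega}(T)=\tfrac12\|T\|$.
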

\begin{proof}:
Suppose that  $\widehat{A}=
\begin{bmatrix}
A_1 & A_2 & \cdots & A_n\\
0 & 0 & \cdots & 0\\
\vdots & \vdots & \ddots & \vdots\\
0 & 0 & \cdots & 0
\end{bmatrix}_{n\times n},
\qquad
\widehat{X}=
\begin{bmatrix}
X_1 & 0 & \cdots & 0\\
0 & X_2 & \cdots & 0\\
\vdots & \vdots & \ddots & \vdots\\
0 & 0 & \cdots & X_n
\end{bmatrix}_{n\times n}$

and 
$\widehat{B}=
\begin{bmatrix}
0 & 0 & \cdots & B_1\\
0 & 0 & \cdots & B_2\\
\vdots & \vdots & \ddots & \vdots\\
0 & 0 & \cdots & B_n
\end{bmatrix}_{n\times n}.$

By simple calculation, we obtain
$\widehat{A}\widehat{X}\widehat{B}
=
\begin{bmatrix}
0 & 0 & \cdots & \sum\limits_{i=1}^{n}A_iX_iB_i\\
0 & 0 & \cdots & 0\\
\vdots & \vdots & \ddots & \vdots\\
0 & 0 & \cdots & 0
\end{bmatrix}_{n \times n}.$

Utilizing the facts that for any  $T \in \mathcal{B}(\mathcal{H}),$ $r_{\omega}(T)=\frac{1}{2}\|T\|$ if $T^2=0$, and $\|T\|^2=\|TT^*\|=\|T^*T\|,$ and for any  $Z_i \in \mathcal{B}(\mathcal{H}),$ where $i=1,2,\cdots,n,$  $\left\|
\begin{bmatrix}
0 & \cdots & 0 & Z_1 \\
\vdots & \ddots & \iddots & \vdots \\
0 & Z_{n-1} & \cdots & 0 \\
Z_n & 0 & \cdots & 0
\end{bmatrix}
\right\|=\left\|
\begin{bmatrix}
Z_1 & 0 & \cdots & 0\\
0 & Z_2 & \cdots & 0\\
\vdots & \vdots & \ddots & \vdots\\
0 & 0 & \cdots & Z_n
\end{bmatrix}
\right\|=\max\limits_{1\le i \le n}\|Z_i\|,$  we infer that
\begin{align*}
r_{\omega}(\widehat{A}\widehat{X}\widehat{B})=\tfrac12 \left\|\begin{bmatrix}
0 & 0 & \cdots & \sum\limits_{i=1}^{n}A_iX_iB_i\\
\vdots & \vdots & \ddots & \vdots\\
0 & 0 & \cdots & 0
\end{bmatrix}\right\|
&=\tfrac12 \left\|\sum\limits_{i=1}^{n}A_iX_iB_i\right\|.
\end{align*}

By applying the inequality (\ref{22}) with $\lambda=\frac12$, we get
\begin{align*}
r_{\omega}(\widehat{A}\widehat{X}\widehat{B})
&\le \frac{\|\widehat{X}\|}{2}\Big(
r_{\omega}(\widehat{A}\widehat{B})
+\tfrac12
\|\widehat{A}\|\;\|\widehat{B}\|+\frac{1}{2} \|\widehat{B}\widehat{A}\|
\Big)\\&
=\frac{\max\limits_{1\le i \le n}\|X_i\|}{2}
\Biggl[r_{\omega}\left(\begin{bmatrix}
0 & 0 & \cdots & \sum\limits_{i=1}^{n}A_iB_i\\
\vdots & \vdots & \ddots & \vdots\\
0 & 0 & \cdots & 0
\end{bmatrix}\right)+\\&\frac{1}{2}\left\|\begin{bmatrix}  \sum\limits_{i=1}^nA_iA^*_{i}&0&\cdots& 0\\ \vdots & \vdots  & \ddots & \vdots\\0 & 0  & \cdots & 0 \end{bmatrix}\right\|^{\frac{1}{2}}\;\left\|\begin{bmatrix}  0&0&\cdots& 0\\ \vdots & \vdots  & \ddots & \vdots\\0 & 0  & \cdots & \sum\limits_{i=1}^nB^*_{i}B_{i}\end{bmatrix}\right\|^{\frac{1}{2}}
\Biggr]\\&=\frac{\max\limits_{1\le i \le n}\|X_i\|}{4} \left(\left\|\sum\limits_{i=1}^{n}A_iB_i\right\|+ \left\|\sum\limits_{i=1}^nA_iA^*_{i}\right\|^{\frac{1}{2}}\;\left\|\sum\limits_{i=1}^nB^*_{i}B_{i}\right\|^{\frac{1}{2}}\right).
\end{align*}

This completes the proof.

\end{proof}

With this Theorem \ref{Theorem3}, we now enhance the inequality  (\ref{10}) proposed by Zamani:

\begin{theorem}\label{Theorem4}
Let $T = U|T|$ be the polar decomposition of $T \in \mathcal{B}(\mathcal{H})$.
Then
\begin{equation}\label{26}
\|M_{\lambda}(T)\|
\le
\max\{\|T\|^{t}, \|T\|^{1-t}\}\, \|\lambda |T^*|^{t} + (1-\lambda)|T|^{1-t}\|\tag{26}
\end{equation}
and
\begin{equation}\label{27}
\|M_{\lambda}(T)\|
\le
\sqrt{\|\lambda |T^*|^{2t} + (1-\lambda)|T|^{2(1-t)}\|\,\left(\lambda \|T\|^{2(1-t)} + (1-\lambda)\|T\|^{2t}\right)}, \quad \lambda,t\in[0,1].\tag{27}
\end{equation}
\end{theorem}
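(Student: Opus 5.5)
The plan is to write $M_\lambda(T)$ as a product of a "row" and a "column" operator built from the polar decomposition, so that each factor's norm can be computed through $\|Z\|^2=\|ZZ^*\|=\|Z^*Z\|$. The starting identities are
\[
U|T| = |T^*|^{t}\,U|T|^{1-t}, \qquad |T|U = |T|^{1-t}\,\big(|T|^{t}U\big),
\]
where the first uses $U|T|^{p}=|T^*|^{p}U$, which holds for the polar decomposition. Hence
\[
M_\lambda(T)=\lambda |T^*|^{t}\big(U|T|^{1-t}\big)+(1-\lambda)|T|^{1-t}\big(|T|^{t}U\big).
\]

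\textbf{Step 1: inequality (27).} Put $R=\begin{bmatrix}\sqrt{\lambda}\,|T^*|^{t} & \sqrt{1-\lambda}\,|T|^{1-t}\\ 0&0\end{bmatrix}$ and $C=\begin{bmatrix}\sqrt{\lambda}\,U|T|^{1-t} & 0\\ \sqrt{1-\lambda}\,|T|^{t}U & 0\end{bmatrix}$. Then $RC$ has $M_\lambda(T)$ as its $(1,1)$ entry and zeros elsewhere, so $\|M_\lambda(T)\|\le \|R\|\,\|C\|$.

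\begin{itemize}
\item The row factor gives $\|R\|^2=\|RR^*\|=\|\lambda|T^*|^{2t}+(1-\lambda)|T|^{2(1-t)}\|$.
\item The column factor gives $\|C\|^2=\|C^*C\|=\|\lambda |T|^{1-t}U^*U|T|^{1-t}+(1-\lambda)U^*|T|^{2t}U\|$.
\item For the first term of $\|C\|^2$, use $U^*U|T|=|T|$ and $\|U\|\le 1$. The triangle inequality then bounds $\|C\|^2$ by $\lambda\|T\|^{2(1-t)}+(1-\lambda)\|T\|^{2t}$.
\end{itemize}

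Taking square roots gives (27) directly.

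\textbf{Step 2: inequality (26).} Here the goal is a bound linear in the positive combination $\lambda|T^*|^t+(1-\lambda)|T|^{1-t}$. I would apply Theorem \ref{Theorem3} with $n=2$ and the following choices:
\[
A_1=\lambda|T^*|^{t},\quad A_2=(1-\lambda)|T|^{1-t},\quad X_1=U|T|^{1-t},\quad X_2=|T|^{t}U,\quad B_1=B_2=I.
\]
Then $\max_i\|X_i\|\le\max\{\|T\|^{1-t},\|T\|^{t}\}$, and $\sum A_iB_i=\lambda|T^*|^t+(1-\lambda)|T|^{1-t}$, which already produces the desired norm.

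\textbf{Main obstacle.} The remaining term in Theorem \ref{Theorem3} is $\|\sum A_iA_i^*\|^{1/2}\|\sum B_i^*B_i\|^{1/2}$. With this choice of $B_i$ it introduces a factor $\sqrt2$, and it involves $\|\lambda^2|T^*|^{2t}+(1-\lambda)^2|T|^{2(1-t)}\|^{1/2}$, which must be compared with $\|\lambda|T^*|^t+(1-\lambda)|T|^{1-t}\|$.

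\begin{itemize}
\item First I would rebalance the weights, taking $B_i=\sqrt{\cdot}$-scaled identities and $A_i$ correspondingly scaled, so that $\sum B_i^*B_i=I$.
\item For the comparison itself I would try the inequality $\|P^2+Q^2\|^{1/2}\le\|P+Q\|$ for positive $P,Q$, writing $P+Q=ZZ^*$ with $Z=[P^{1/2}\ Q^{1/2}]$ and comparing the block matrix $Z^*Z$ with $\begin{bmatrix}P&0\\0&Q\end{bmatrix}$.
\item If that fails, the fallback is the direct estimate
\[
|\langle M_\lambda(T)x,y\rangle|\le \max\{\|T\|^{t},\|T\|^{1-t}\}\big(\lambda\||T^*|^{t}y\|+(1-\lambda)\||T|^{1-t}y\|\big),
\]
combined with Lemma \ref{Lemma9} to pass to the operator $\lambda|T^*|^t+(1-\lambda)|T|^{1-t}$.
\end{itemize}

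This comparison step is where the argument is most likely to need adjustment.
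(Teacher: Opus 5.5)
Your Step 1 is correct and is what the paper intends for (27). The paper's use of (25) with $X_i=I$ is just $\|\hat A\hat B\|\le\|\hat A\|\,\|\hat B\|$, and your row/column factors are the right substitution; the one printed in the paper is garbled.

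Step 2, however, does not prove (26), for two independent reasons.

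\begin{enumerate}
\item \emph{Theorem~\ref{Theorem3} needs positive $X_i$.} It is derived from (22), hence from the Buzano-type bound (18), which requires $X\ge0$. Without positivity (25) is false: with the $2\times2$ matrix units, $A_1=E_{11}$, $X_1=E_{12}$, $B_1=E_{22}$, $A_2=X_2=B_2=0$ gives $1\le\frac12$. Your $X_1=U|T|^{1-t}$ and $X_2=|T|^{t}U$ are not positive.

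\item \emph{Scalar $B_i$ cannot reach $\|W\|$.} Even granting (25), no choice $B_i=\beta_iI$ can bring the second term down to $\|W\|$, where $W:=\lambda|T^*|^{t}+(1-\lambda)|T|^{1-t}$. Take $T=E_{21}$ and $t\in(0,1)$, so $|T|^{1-t}=E_{11}$, $|T^*|^{t}=E_{22}$ and $\|W\|=\max\{\lambda,1-\lambda\}$. With $A_1=\frac{\lambda}{\beta_1}|T^*|^{t}$ and $A_2=\frac{1-\lambda}{\beta_2}|T|^{1-t}$ one gets $\|\sum A_iA_i^*\|\,\|\sum B_i^*B_i\|\ge\lambda^2+(1-\lambda)^2>\max\{\lambda,1-\lambda\}^2$ for $\lambda\in(0,1)$.
\end{enumerate}

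In particular, your balanced choice would need $\|\lambda P^2+(1-\lambda)Q^2\|\le\|\lambda P+(1-\lambda)Q\|^2$. That is the reverse of what operator convexity of $s\mapsto s^2$ gives. The inequality $\|P^2+Q^2\|^{1/2}\le\|P+Q\|$ is true, but it still leaves the factor $\sqrt2$.

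Your fallback loses in the same way. For this $T$, $\sup_{\|y\|=1}\bigl(\lambda\||T^*|^{t}y\|+(1-\lambda)\||T|^{1-t}y\|\bigr)=\sqrt{\lambda^2+(1-\lambda)^2}>\|W\|$. Once the estimate is quadratic in $|T^*|^{t}$ and $|T|^{1-t}$, Lemma~\ref{Lemma9} cannot recover the linear operator $W$.

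The missing idea is to keep positive operators in the middle and split the outer powers symmetrically, so that $W$ appears linearly as a Gram operator. Take
\begin{itemize}
\item $A_1=\sqrt{\lambda}\,U|T|^{t/2}$, $X_1=|T|^{1-t}$, $B_1=\sqrt{\lambda}\,|T|^{t/2}$;
\item $A_2=\sqrt{1-\lambda}\,|T|^{(1-t)/2}$, $X_2=|T|^{t}$, $B_2=\sqrt{1-\lambda}\,|T|^{(1-t)/2}U$.
\end{itemize}
Then $\sum A_iX_iB_i=M_\lambda(T)$ and $\sum A_iB_i=WU$. Both $\sum A_iA_i^*$ and $\sum B_i^*B_i$ have norm at most $\|W\|$; for $0<t<1$ they equal $W$ and $U^*WU$. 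Since now $X_i\ge0$, (25) applies legitimately and gives
\[
\|M_\lambda(T)\|\le\frac{\max\{\|T\|^{t},\|T\|^{1-t}\}}{2}\Bigl(\|W\|+\|W\|^{1/2}\|W\|^{1/2}\Bigr),
\]
which is (26). This is the paper's proof. The factor $\frac12$ from the Buzano-type bound is exactly what absorbs the two copies of $\|W\|$.
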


\begin{proof}: For $n=2,$  the inequality (\ref{25}) becomes 

\begin{align*}
\|A_1X_1B_1 + A_2X_2B_2\|
&\le\frac{\max\{\|X_1\|,\|X_2\|\}}{2}\Big[\|A_1B_1 + A_2B_2 \|\\&
+ \|A_1A_1^{*}+A_2A_2^{*}\|^{1/2}\;\|B_1^{*}B_1 + B_2^{*}B_2\|^{1/2}\Big].\end{align*}

Letting 
$A_1=\sqrt{\lambda} U|T|^{\frac{t}{2}}, 
\, 
X_1=|T|^{1-t}, 
\, 
B_1=\sqrt{\lambda}|T|^{\frac{t}{2}}, \,
A_2=\sqrt{1-\lambda}\,|T|^{\frac{1-t}{2}}, 
\, 
X_2=|T|^{t}, \,
\text{and}\, 
B_2=\sqrt{1-\lambda}\,|T|^{\frac{1-t}{2}}U
$
in the above inequality, we get
\begin{align*}
\|M_{\lambda}(T)\|
&\le
\frac12
\max\{\|T\|^{1-t},\|T\|^{t}\}
\Big[
\|\lambda U|T|^{t}+(1-\lambda)|T|^{1-t}U\|  \\
&\qquad
+
\|\lambda U|T|^{t}U^{*}
+(1-\lambda)|T|^{1-t}\|^{1/2}\,
\|
\lambda |T|^{t}
+(1-\lambda)U^*|T|^{1-t}U
\|^{1/2}
\Big].
\end{align*}

Using the fact that for any $A \in \mathcal{B}(\mathcal{H})$ 
and unitary operator $U,$

$U|A|^{p}=|A^*|^{p}U,\,U|A|^{p}U^{*}=|A^*|^{p}, 
\,U^{*}|A^*|^{p}U=|A|^{p},
\, (p>0),$
we obtain
\begin{align*}
\|M_{\lambda}(T)\|
&\le
\frac12
\max\{\|T\|^{1-t},\|T\|^{t}\}
\Big[
\|(\lambda |T^*|^{t}+(1-\lambda)|T|^{1-t})U\|  \\
&\quad
+
\|U^*(\lambda U |T|^{t}U^*+(1-\lambda)|T|^{1-t})U\|^{1/2}\;
\|
\lambda |T^*|^{t}+(1-\lambda)|T|^{1-t}
\|^{1/2}
\Big].
\end{align*}
This provides the inequality (\ref{26}).

Again, by letting
$X=Y=I,$ $A_1=\sqrt{\lambda}\,U|T|^{t},$                            $ A_2=\sqrt{\lambda}\,|T|^{1-t},$ $B_1=\sqrt{1-\lambda}\,|T|^{1-t},$
$B_2=\sqrt{1-\lambda}\,|T|^{t}U$ in (\ref{25}), we obtain the required inequality (\ref{27}).
\end{proof}

\begin{remark}
 We now demonstrate that the bound in Theorem \ref{Theorem4} (for $t=\frac{1}{2}$) is not greater than the upper bound in (\ref{10}).   
By using the fact  $\|A^r\|\le\|A\|^r,$ for any positive bounded operator $A$ and $r\in(0,1],$ and by the triangle inequality,   we see
\begin{align*}
\|M_{\lambda}(T)\|
&\le \|T\|^{\frac12}\,\|\lambda |T^*|^{\frac{1}{2}} + (1-\lambda)|T|^{\frac{1}{2}}\| \\&\le \|T\|^{\frac12}
    \left(\lambda\||T|\|^{\frac12} + (1-\lambda)\||T|\|^{\frac12} \right)= \|T\| \text{\qquad{and}}\\
\|M_{\lambda}(T)\|
&\le \|\lambda |T^*| + (1-\lambda)|T|\|^{\frac12}
     \left(\lambda \|T\| + (1-\lambda)\|T\|\right)^{\frac12} \\
&\le \left(\lambda\|T\| + (1-\lambda)\|T\|\right)^{\frac12}
     \|T\|^{\frac12} = \|T\|.
\end{align*}

For comparison, we consider a $3\times 3$ matrix 
$A=\begin{bmatrix}
 5-3i&3+i&-2-2i\\
 -2+2i&5+2i&2-i\\
 4+i&-5-5i&-2-3i
\end{bmatrix}.$
Let $g_1(\lambda)=\|A\|^{\frac{1}{2}}\|\lambda |A^*|^{\frac{1}{2}} + (1-\lambda)|A|^{\frac{1}{2}}\|,$ $g_2(\lambda)=\| \lambda |A^*| + (1-\lambda)|A|\|^{\frac12}
     \|A\|^{\frac12}$ and $\|M_{\lambda}(A)\|=g_3(\lambda).$
\begin{figure}[ht]
    \centering
     \begin{subfigure}{0.41\textwidth}
        \centering
        \includegraphics[width=\linewidth]{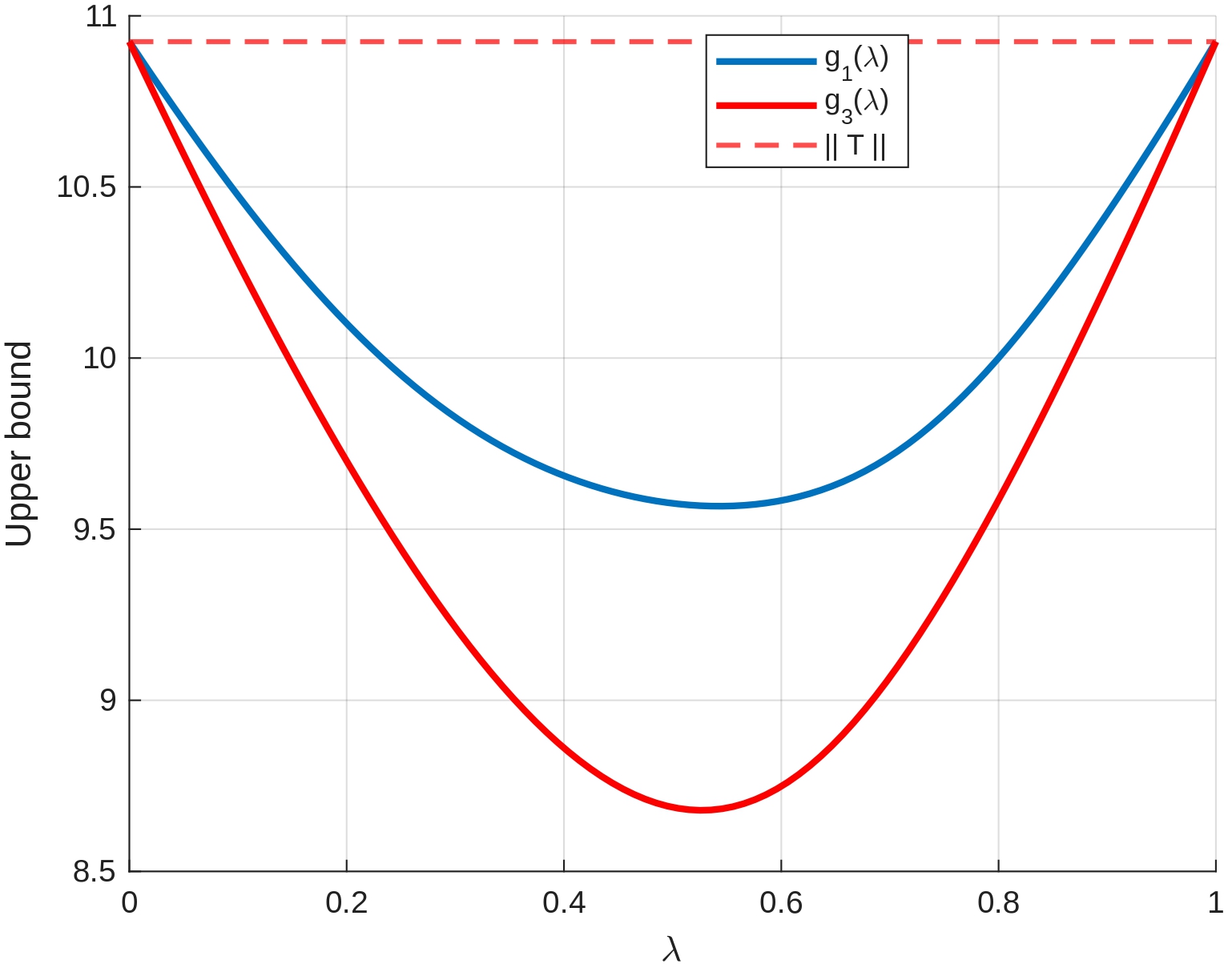}
        \caption{Upper bound for (\ref{26})}
        \label{fig:first1}
    \end{subfigure}
    \hfill
    \begin{subfigure}{0.41\textwidth}
        \centering
        \includegraphics[width=\linewidth]{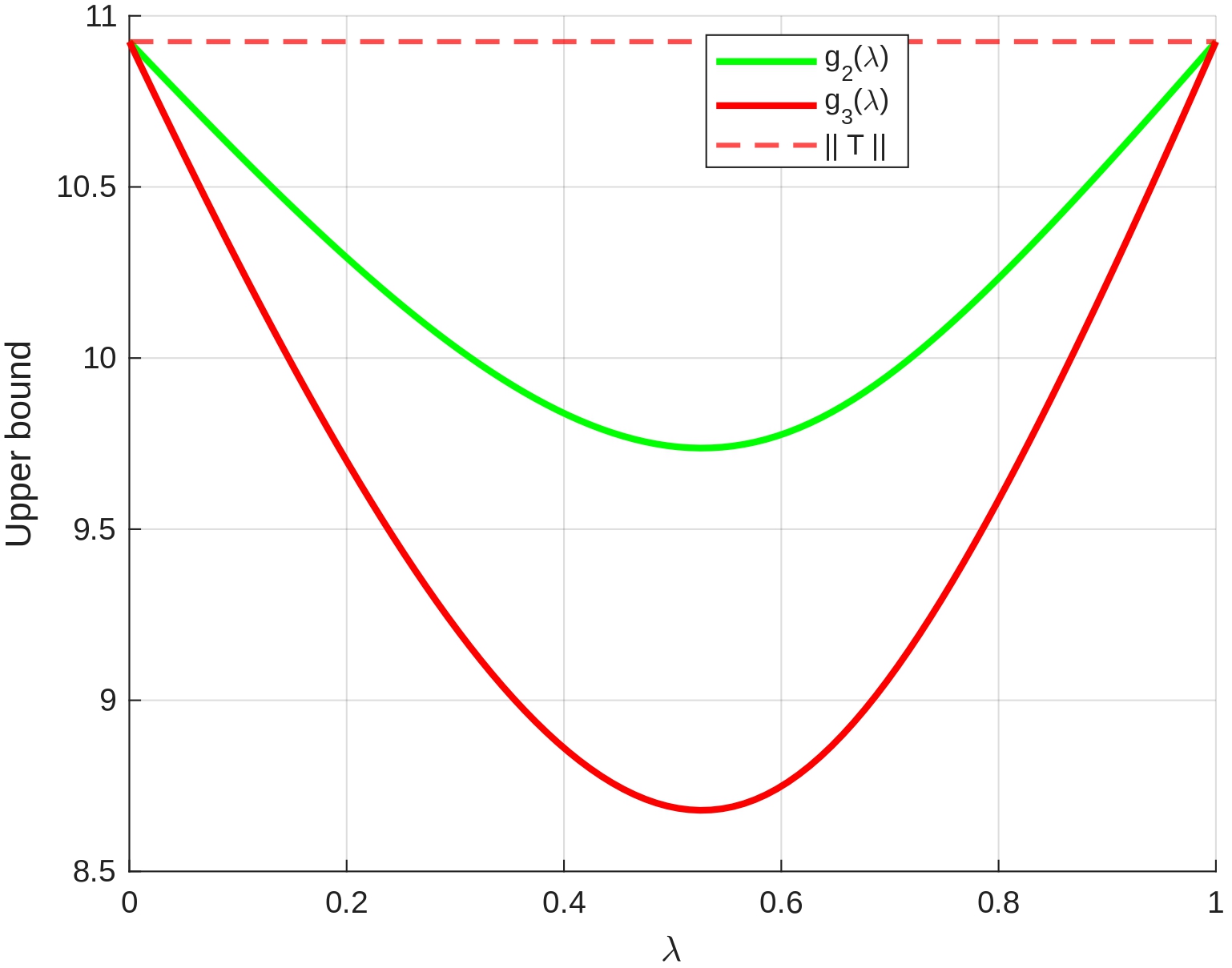}
        \caption{Upper bound for (\ref{27})}
        \label{fig:second1}
    \end{subfigure}
    \caption{Comparison of upper bounds of $\|M_{\lambda}(A)\|$ with $\|A\|.$}
    \label{fig:both1}
\end{figure}

In Fig.\ref{fig:both1},  we plot the upper bounds $g_1(\lambda)$ and $g_2(\lambda)$ obtained from inequalities (\ref{26}) and (\ref{27}), respectively, the exact quantity $g_3(\lambda)$ and the classical bound $\|A\|$. We note that $g_1(\lambda)$ and $g_2(\lambda)$ are strictly less than $\|A\|$ for $0<\lambda<1$, and all three bounds coincide at the endpoints $\lambda=0$ and $\lambda=1$.

Thus the upper bounds for $\|M_{\lambda}(A)\|$ derived from (\ref{26}) and (\ref{27}) is  sharper than the classical bound (\ref{10}).
The numerical example clearly demonstrates the efficacy of the new inequalities and confirms that the inequalities (\ref{26}) and (\ref{27}) are better than the standard estimate (\ref{10}).

\end{remark}

We now present the following result which  generalizes and improves the second inequality of (\ref{12}).

\begin{theorem}\label{Theorem5}
Let $T \in \mathcal{B}(\mathcal{H})$. Then for $\lambda \in [0,1]$, we have
\begin{align}\label{28}
\|M_{\lambda}(T)\|
&\le
\sqrt{
\|
\,\lambda^2 |T| + (1-\lambda)^2 |T^D|
\,\| \;
\|\,
\lambda^2 |T^*| + (1-\lambda)^2 |(T^D)^*|
\,\|
} \notag \\
&\quad
+
\lambda(1-\lambda)
\sqrt{
\,
\|\,|T|+|T^D|\,\|
\;
\|\,|T^*|+|(T^D)^*|\,\|
}\tag{28}
\end{align}

In particular,
\begin{align}\label{29}
\|\widehat{T}\|
&\le\frac12\sqrt{\,\|\,|T|+|T^D|\,\|\;\|\,|T^*|+|(T^D)^*|\,\|}.\tag{29}
\end{align}
\end{theorem}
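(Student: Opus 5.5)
The plan is to bound $|\langle M_\lambda(T)x,y\rangle|$ for arbitrary unit vectors $x,y\in\mathcal{H}$ and then take the supremum, which equals $\|M_\lambda(T)\|$. Put $\mu=1-\lambda$. By the triangle inequality,
$$|\langle M_\lambda(T)x,y\rangle|\le \lambda|\langle Tx,y\rangle|+\mu|\langle T^Dx,y\rangle|.$$
We then apply the mixed Schwarz inequality of Lemma \ref{Lemma9}, with $f(c)=g(c)=c^{1/2}$, once to $P=T$ and once to $P=T^D$. Introduce the nonnegative numbers
$$a_1=\langle |T|x,x\rangle,\quad b_1=\langle |T^*|y,y\rangle,\quad a_2=\langle |T^D|x,x\rangle,\quad b_2=\langle |(T^D)^*|y,y\rangle.$$
This gives
$$|\langle M_\lambda(T)x,y\rangle|\le \lambda\sqrt{a_1b_1}+\mu\sqrt{a_2b_2}.$$

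The key step is to split the weights using $\lambda+\mu=1$: write $\lambda=\lambda^2+\lambda\mu$ and $\mu=\mu^2+\lambda\mu$. The right-hand side then becomes
$$\bigl[\lambda^2\sqrt{a_1b_1}+\mu^2\sqrt{a_2b_2}\bigr]+\lambda\mu\bigl[\sqrt{a_1b_1}+\sqrt{a_2b_2}\bigr].$$
To each bracket we apply the two-term Cauchy--Schwarz inequality $\sqrt{pq}+\sqrt{rs}\le\sqrt{p+r}\,\sqrt{q+s}$, writing $\lambda^2\sqrt{a_1b_1}=\sqrt{(\lambda^2a_1)(\lambda^2b_1)}$ in the first bracket. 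The first bracket is then at most
$$\sqrt{\lambda^2a_1+\mu^2a_2}\,\sqrt{\lambda^2b_1+\mu^2b_2}=\sqrt{\langle(\lambda^2|T|+\mu^2|T^D|)x,x\rangle\,\langle(\lambda^2|T^*|+\mu^2|(T^D)^*|)y,y\rangle}.$$
The second bracket is at most
$$\sqrt{\langle(|T|+|T^D|)x,x\rangle\,\langle(|T^*|+|(T^D)^*|)y,y\rangle}.$$

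Each of these inner products involves a positive operator and a unit vector, so it is bounded by the norm of that operator. Taking the supremum over unit $x,y$ yields \eqref{28}.

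For \eqref{29}, set $\lambda=\frac12$. Then $\lambda^2=\mu^2=\lambda\mu=\frac14$, so the first term of \eqref{28} becomes $\frac14\sqrt{\|\,|T|+|T^D|\,\|\;\|\,|T^*|+|(T^D)^*|\,\|}$, and the second term is the same quantity. Their sum is exactly the right-hand side of \eqref{29}.

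The main obstacle is finding the decomposition $\lambda=\lambda^2+\lambda(1-\lambda)$ that produces the two separate radicals. Once that splitting is made, each step is a standard application of Lemma \ref{Lemma9} and Cauchy--Schwarz.
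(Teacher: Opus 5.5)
Your proof is correct and follows essentially the same route as the paper. Both use the triangle inequality, then the mixed Schwarz inequality (Lemma \ref{Lemma9}) for $T$ and $T^D$, the splitting $\lambda=\lambda^2+\lambda(1-\lambda)$, the two-term Cauchy--Schwarz inequality on each group, and finally the supremum over unit vectors, with (\ref{29}) obtained by setting $\lambda=\tfrac12$.
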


\begin{proof}: We easily get the  inequality (\ref{29}) from (\ref{28}) by taking $\lambda=\frac{1}{2}.$

For any $x,y\in\mathcal{H}$ with $\|x\|=\|y\|=1$ and 
$\lambda\in[0,1]$, we have
\begin{align*}
|\langle M_\lambda(T)x,y\rangle|
&\le
\lambda |\langle Tx,y\rangle|
+(1-\lambda)|\langle T^{D}x,y\rangle|.
\end{align*}

Rewrite the right-hand side as
\begin{align*}
&= \lambda^2 |\langle Tx,y\rangle|
+(1-\lambda)^2 |\langle T^{D}x,y\rangle|  
+\lambda(1-\lambda)
\big(
|\langle Tx,y\rangle|
+|\langle T^{D}x,y\rangle|
\big).
\end{align*}

Using Lemma \ref{Lemma9}, we obtain 
\begin{align}\label{30}
&|\langle M_\lambda(T)x,y\rangle|\le
\lambda^2
\langle |T|x,x\rangle^{1/2}\, \langle |T^*|y,y\rangle^{1/2}
+
(1-\lambda)^2
\langle |T^{D}|x,x\rangle^{1/2}\, \langle |(T^D)^*|y,y\rangle^{1/2}\notag
\\
&\quad
+\lambda(1-\lambda)
\Big(
\langle |T|x,x\rangle^{1/2}\, \langle |T^*|y,y\rangle^{1/2}+\langle |T^{D}|x,x\rangle^{1/2}\, \langle |(T^D)^*|y,y\rangle^{1/2}
\Big).\tag{30}
\end{align}

Applying the fundamental fact   $(ab+cd)^2\le (a^2+c^2)\, (b^2+d^2)$ for any positive real numbers $a,b,c,d$ on the last inequality (\ref{30}),
we obtain
\begin{align*}
&|\langle M_\lambda(T)x,y\rangle|
\le\sqrt{\langle (\lambda^2|T|+(1-\lambda)^2|T^{D}|)x,x\rangle \; \langle (\lambda^2|T^*|+(1-\lambda)^2|(T^{D})^*|)y,y\rangle}
\\&+\lambda(1-\lambda)
\sqrt{
\langle (|T|+|T^{D}|)x,x\rangle\; \langle (|T^*|+|(T^{D})^*|)y,y\rangle
}.
\end{align*}

Taking supremum over all $x,y\in\mathcal{H}$ with 
$\|x\|=\|y\|=1$, we get the required inequality (\ref{28}).

\end{proof}

\begin{remark}

 By applying AM-GM inequality in (\ref{29}), we infer that 
   \begin{align*}
\|\widehat{T}\|
&\le \frac12
\sqrt{
\,
\|\,|T|+|T^D|\,\|
\;
\|\,|T^*|+|(T^D)^*|\,\|
} \\&\le\frac{
\|\,|T|+|T^D|\,\|
+ \|\,|T^*|+|(T^D)^*|\,\|
}{4}\le\|T\|.\end{align*}
\begin{example}
To show  proper improvement, we consider a numerical example.

Let 
$T=\begin{bmatrix}
   0 & 0 & 7\\
   0 & 0 & 5\\
   0 & 0 & 9
\end{bmatrix}.$ By using MATLAB, we get 

\begin{align*}\|\widehat{T}\|\approx 9.974969 
&<\frac12
\sqrt{
\,
\|\,|T|+|T^D|\,\|
\;
\|\,|T^*|+|(T^D)^*|\,\|}\approx 9.978785\\& < \frac{
\|\,|T|+|T^D|\,\|
+ \|\,|T^*|+|(T^D)^*|\,\|
}{4}\approx 10.004741 < \|T\| \approx 12.449900.
\end{align*}
\end{example}
  
\end{remark}

\begin{remark}
From (\ref{28}), we see that
\begin{align*}
\|M_{\lambda}(T)\|
&\le
\sqrt{
\|
\,\lambda^2 |T| + (1-\lambda)^2 |T^D|
\,\|\;
\|\,
\lambda^2 |T^*| + (1-\lambda)^2 |(T^D)^*|
\,\|
}  \notag \\
&\quad
+
\lambda(1-\lambda)
\sqrt{
\,
\|\,|T|+|T^D|\,\|
\;
\|\,|T^*|+|(T^D)^*|\,\|
}.
\end{align*}
By the triangle inequality, we get 
\begin{align*}
 \|M_{\lambda}(T)\|&\le\sqrt{\left(\lambda^2\|\,|T|\,\|+(1-\lambda)^2\|\,|T^D|\,\|\right)\;\left(\lambda^2\|\,|T^*|\,\|+(1-\lambda)^2\|\,|(T^D)^*|\,\|\right)}\\&+\lambda(1-\lambda)
\sqrt{
\,
\left(\|\,|T|\,\|+\|\,|T^D|\,\|\right)
\;
\left(\|\,|T^*|\,\|+\|\,|(T^D)^*|\,\|\right)
}.    
\end{align*}
By using the facts for any operator $S \in \mathcal{B}(\mathcal{H}),$
$\|\,|S|\,\|=\|\,|S^*|\,\|=\|S\|$  and $\|S^D\|\le\|S\|,$ we obtain
\begin{align*}
   \|M_{\lambda}(T)\|\le(2\lambda^2-2\lambda+1)\|T\|+2\lambda\|T\|-2\lambda^2\|T\|=\|T\|.
\end{align*}
For comparison, we consider a $3\times 3$ matrix 
$B=\begin{bmatrix}
 3 - 5i&-4 + 3i&6 + 4i\\
 1 - 2i&-2 + 5i&3 + 5i\\
 5 - 2i&-5 - 11i&-2 + 9i
\end{bmatrix}.$

Let
\begin{align*}h_1(\lambda)&=\sqrt{
\|\lambda^2|B|+(1-\lambda)^2|B^D|\,\|\; \|\lambda^2|B^*|+(1-\lambda)^2|(B^D)^*|\,\|}
\\&+\lambda(1-\lambda)\sqrt{\|\,|B|+|B^D|\,\|\;\|\,|B^*|+|(B^D)^*|\,\|
}\end{align*} and $h_2(\lambda)=\|M_{\lambda}(B)\|.$

\begin{figure}[ht]
    \centering
     
\includegraphics[width=0.45\textwidth]{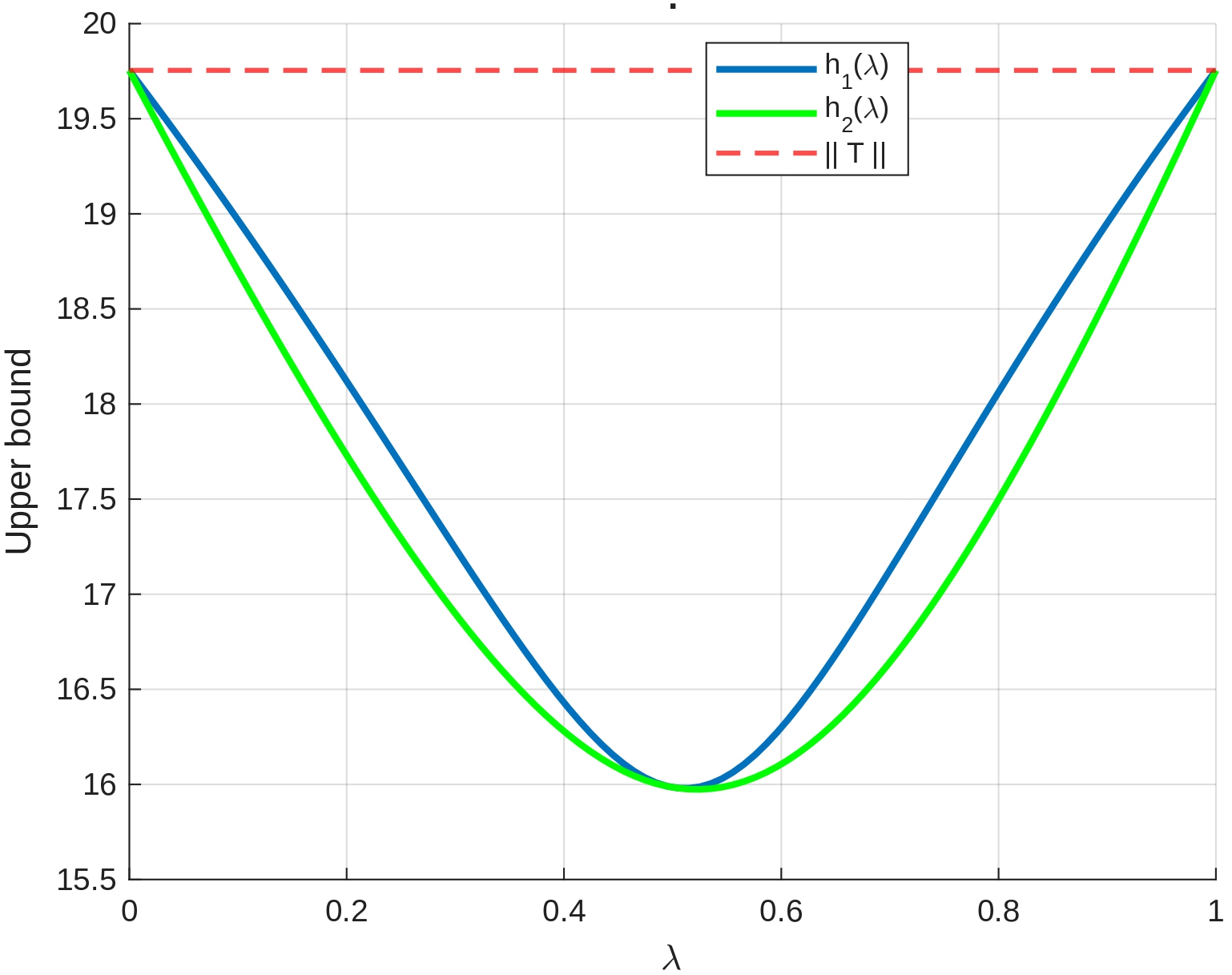}
        \caption{Upper bound for (\ref{28})}
        
    \label{fig:both}
\end{figure}
  
The upper bound $h_{1}(\lambda)$ obtained from inequality (\ref{28}) is compared with the exact quantity $h_{2}(\lambda)$ and the constant bound $\|B\|$ obtained from (\ref{10}) in  Fig.\ref{fig:both}. It is shown that $h_1(\lambda) < \|B\|$ for every $\lambda\in(0,1)$ and the bounds coincide at the endpoints $\lambda=0$ and $\lambda=1$. Moreover, the graph of $h_1(\lambda)$ closely follows the graph of $h_2(\lambda)$ and attains its minimum near $\lambda=\frac12$.

Thus the upper bound for $\|M_{\lambda}(B)\|$ derived from (\ref{28}) is much sharper than the traditional bound (\ref{10}). The numerical example shows clearly the usefulness of the new inequality and verifies that (\ref{28}) is an improvement of (\ref{10}).

\end{remark}

Next, we obtain an upper bound for the norm of the $\lambda$-mean transform $M_{\lambda}(T),$  which provides a quantitative link between $M_{\lambda}(T)$ and the generalized Aluthge transform $\widehat{T}(t)$.

\begin{theorem}\label{Theorem6}
Let $T = U|T|$ be the polar decomposition of $T$. Then for $\lambda,t \in [0,1]$
\begin{equation}\label{31}
\|M_\lambda(T)\|
\le
\frac{\max\{\lambda,1-\lambda\}}{2}
\left(
\||T|^{2t} + |T|^{2(1-t)}\|
\right)
+ \sqrt{\lambda(1-\lambda)}\, \|\widehat{T}(t)\|.
\tag{31}\end{equation}

In particular (for $\lambda=t = \dfrac12$),
\begin{equation}\label{32}
\|\widehat{T}\|
\le
\frac12 \|T\| + \frac12 \|\widetilde{T}\|
\le
\|T\|.
\tag{32}\end{equation}
\end{theorem}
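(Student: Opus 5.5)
The plan is to realize $M_\lambda(T)$ as a product $AB^*$ of a row and a column operator matrix and then apply Lemma \ref{Lemma8}, i.e. $\|AB^*\|\le \frac12\|A^*A+B^*B\|$. Put $a=\sqrt{\lambda}$, $b=\sqrt{1-\lambda}$. On $\mathcal{H}\oplus\mathcal{H}$ take the row operators
\[
A=\begin{bmatrix} aU|T|^{t} & b|T|^{1-t}\\ 0&0\end{bmatrix},\qquad
B=\begin{bmatrix} a|T|^{1-t} & bU^*|T|^{t}\\ 0&0\end{bmatrix}.
\]
Then
\[
AB^*=\begin{bmatrix} \lambda U|T|+(1-\lambda)|T|U & 0\\ 0&0\end{bmatrix},
\]
so $\|AB^*\|=\|M_\lambda(T)\|$.

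Next I compute $A^*A+B^*B$ as a $2\times 2$ block matrix. Using $U^*U|T|^{s}=|T|^{s}$ for $s>0$, the $(1,1)$ entry is $\lambda\big(|T|^{2t}+|T|^{2(1-t)}\big)$. The $(2,2)$ entry is $(1-\lambda)\big(|T|^{2(1-t)}+|T|^{t}UU^*|T|^{t}\big)$. The off-diagonal entries are $ab\big(|T|^{t}U^*|T|^{1-t}+|T|^{1-t}U^*|T|^{t}\big)=2\sqrt{\lambda(1-\lambda)}\,(\widehat{T}(t))^*$ and its adjoint.

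I then split $A^*A+B^*B$ into its diagonal part $D$ and off-diagonal part $E$ and use the triangle inequality. For $E$, the norm of an off-diagonal block matrix with entries $Z$ and $Z^*$ is $\|Z\|$, so $\|E\|=2\sqrt{\lambda(1-\lambda)}\,\|\widehat{T}(t)\|$. For $D$, since $UU^*\le I$ we have $|T|^{t}UU^*|T|^{t}\le |T|^{2t}$. Monotonicity of the norm on positive operators then bounds both diagonal entries by $\max\{\lambda,1-\lambda\}\,\||T|^{2t}+|T|^{2(1-t)}\|$, and $\|D\|$ is the maximum of the two diagonal norms. Halving $\|D\|+\|E\|$ gives exactly (\ref{31}).

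For (\ref{32}) I set $\lambda=t=\frac12$. The first term becomes $\frac14\|2|T|\|=\frac12\|T\|$ and the second becomes $\frac12\|\widetilde{T}\|$, since $\widehat{T}(\frac12)=\widetilde{T}$. The final inequality follows from $\|\widetilde{T}\|\le\|T\|$.

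The main obstacle is choosing the factorization so that the cross terms reproduce $\widehat{T}(t)$ and the diagonal terms collapse to powers of $|T|$. The $(2,2)$ entry is the delicate one: it contains $UU^*$ rather than $U^*U$, and it is handled by the operator inequality $|T|^{t}UU^*|T|^{t}\le|T|^{2t}$, which relies on $U$ being a partial isometry.
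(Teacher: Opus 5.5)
Your proposal is correct and matches the paper's proof essentially step for step: the same factorization $M_\lambda(T)=AB^*$ with the same $A$ and $B^*$, then Lemma~\ref{Lemma8}, then splitting $A^*A+B^*B$ into its diagonal and off-diagonal parts and applying the triangle inequality. Your handling of the $(2,2)$ entry is more careful than the paper's, which writes it as $(1-\lambda)(|T|^{2t}+|T|^{2(1-t)})$ with equality (tacitly treating $U$ as unitary), whereas for a general partial isometry only your inequality $|T|^{t}UU^*|T|^{t}\le|T|^{2t}$ holds; at the endpoint $t=0$, where your identity for $s>0$ does not apply, the $(1,1)$ entry is $\lambda(U^*U+|T|^2)$, which is handled by the same monotonicity argument using $U^*U\le I$.
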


\begin{proof}: We easily get the  inequality (\ref{32}) from (\ref{31}) by taking $\lambda=t=\frac{1}{2}.$
Therefore, it is enough to prove the inequality (\ref{31}).

Suppose that
$
A=
\begin{bmatrix}
\sqrt{\lambda}\,U|T|^{t} & \sqrt{1-\lambda}\,|T|^{1-t} \\
0 & 0
\end{bmatrix}
\quad\text{and}\quad
B^{*}=
\begin{bmatrix}
\sqrt{\lambda}\,|T|^{1-t} & 0 \\
\sqrt{1-\lambda}\,|T|^{t}U & 0
\end{bmatrix},
$
where $\lambda\in[0,1]$ and $t\in[0,1]$.

We have, 
\begin{align*}
 \|M_{\lambda}(T)\|
&=
\|\lambda U|T|+(1-\lambda)|T|U\|
\\&=
\|AB^{*}\|.   
\end{align*}
By applying Lemma \ref{Lemma8}, we get 
\begin{align*}
\|M_{\lambda}(T)\|
&\le
\frac12
\left\|
\begin{bmatrix}
\sqrt{\lambda}\,|T|^{t}U^{*} & 0 \\
\sqrt{1-\lambda}\,|T|^{1-t} & 0
\end{bmatrix}
\begin{bmatrix}
\sqrt{\lambda}\,U|T|^{t} & \sqrt{1-\lambda}\,|T|^{1-t} \\
0 & 0
\end{bmatrix}
\right. \\
&\qquad\left.
+
\begin{bmatrix}
\sqrt{\lambda}\,|T|^{1-t} & 0 \\
\sqrt{1-\lambda}\,|T|^{t}U & 0
\end{bmatrix}
\begin{bmatrix}
\sqrt{\lambda}\,|T|^{1-t} & \sqrt{1-\lambda}\,U^{*} |T|^{t}\\
0 & 0
\end{bmatrix}
\right\|\\&=
\frac12
\left\|
\begin{bmatrix}
\lambda \,\left(|T|^{2t} +|T|^{2(1-t)}\right) & 2\sqrt{\lambda(1-\lambda)}\,\left(\widehat{T}(t)\right)^{*} \\
2\sqrt{\lambda(1-\lambda)}\widehat{T}(t) & (1-\lambda)\,\left(|T|^{2t} + |T|^{2(1-t)}\right)
\end{bmatrix}
\right\| 
\\&\le
\frac12
\left\|
\begin{bmatrix}
\lambda \,\left(|T|^{2t} +|T|^{2(1-t)}\right) & 0 \\
0 & (1-\lambda)\,\left(|T|^{2t} + |T|^{2(1-t)}\right)
\end{bmatrix}
\right\|\\&+
\sqrt{\lambda(1-\lambda)}
\left\|\begin{bmatrix}
0 & (\widehat{T}(t))^{*} \\
\widehat{T}(t) & 0
\end{bmatrix}
\right\|\\&=\frac{\max\{\lambda,1-\lambda\}}{2}
\|\,|T|^{2t}+|T|^{2(1-t)}\| 
+\sqrt{\lambda(1-\lambda)}\,\|\widehat{T}(t)\|.
\end{align*}
The last equality is obtained from the following fact that  $\left\|\begin{bmatrix}
X & 0\\
0 & Y
\end{bmatrix}\right\|=\left\|\begin{bmatrix}
0 & X\\
Y & 0
\end{bmatrix}\right\|=\max\{\|X\|, \|Y\|\}$     for  $X,Y\in \mathcal{B}(\mathcal{H}).$
\end{proof}
\begin{example}
We consider an example to show proper improvement for the inequality (\ref{32}). 

Let $T=\begin{bmatrix}
    2+5i & 3+7i & 1+3i \\
    3+2i & 2+3i & 7+3i \\
    1+5i & 7+5i & 3+11i
\end{bmatrix}.$ By using MATLAB, we obtain  $\|\widehat{T}\|\approx 18.427622,$  $\|\widetilde{T}\|\approx 18.568053$ and  $\|T\|\approx 19.170412.$
 It shows that \begin{align*} \|\widehat{T}\|\approx 18.427622< \frac12 \|T\| + \frac12 \|\widetilde{T}\|\approx 18.568053   < \|T\| \approx 19.170412.\end{align*}
\end{example}
The following characterization has been presented in  [\citenum{altwaijry2024new}, Theorem 4.1]; nonetheless, our derivation is conceptually distinct.  As an application of our investigation that we mentioned earlier, offer a simpler and structurally informed approach to the classical characterization that we intend to discuss.
\begin{theorem}\label{Theorem7}
Let $T \in \mathcal{B}(\mathcal{H})$. Then the following conditions are equivalent:
\begin{enumerate}
    \item[(i)] $\|\widehat{T}\|=\|T\|$;
    \item[(ii)] $\|\widetilde{T}\|=\|T\|$;
    \item[(iii)] $\|T\|^{2}=\|T^{2}\|$.
\end{enumerate}
\end{theorem}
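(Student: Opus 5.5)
The plan is to prove the cycle (ii)$\Rightarrow$(i)$\Rightarrow$(iii)$\Rightarrow$(ii). The first two implications use the inequality (\ref{32}) from Theorem \ref{Theorem6} together with (\ref{13}); the last one is a direct computation with the polar decomposition $T=U|T|$.

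\emph{(ii)$\Rightarrow$(i).} By (\ref{13}) we have $\|\widetilde{T}\|\le\|\widehat{T}\|\le\|T\|$. Hence $\|\widetilde{T}\|=\|T\|$ forces $\|\widehat{T}\|=\|T\|$.

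\emph{(i)$\Rightarrow$(ii).} Inequality (\ref{32}) gives $\|\widehat{T}\|\le\frac12\|T\|+\frac12\|\widetilde{T}\|$. If $\|\widehat{T}\|=\|T\|$, this yields $\|T\|\le\|\widetilde{T}\|$. Since always $\|\widetilde{T}\|\le\|T\|$, we get equality. This step is exactly the structural content the paper advertises: the mean transform sits between $T$ and its Aluthge transform.

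\emph{(ii)$\Rightarrow$(iii).} We use $\widetilde{T}=|T|^{1/2}U|T|^{1/2}$ and $T^2=U|T|U|T|$. First I would establish
\[
\|\widetilde{T}\|^2\le\|\,|T|^{1/2}U|T|U|T|^{1/2}\|\le\|\,|T|U|T|\,\|=\|T^2\|.
\]
The identity $\|\,|T|U|T|\,\|=\|U|T|U|T|\|$ holds because $U^*U$ is the projection onto $\overline{\mathrm{ran}}|T|$, so left multiplication by $U$ is isometric on the range of $|T|$. The middle inequality is the standard Heinz-type estimate $\|A^{1/2}XA^{1/2}\|\le\|AX\|^{1/2}\|XA\|^{1/2}$, or equivalently $\|A^{1/2}XA^{1/2}\|^2\le\|AXA\|$ for positive $A$ and a contraction $X$. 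For the first inequality, one route is to bound $\|\widetilde{T}\|^2$ by $\|\widetilde{T}^*\widetilde{T}\|$. A simpler route is to quote the known inequality $\|\widetilde{T}\|^2\le\|T^2\|$ (Yamazaki), which follows from Heinz's inequality $\|A^{1/2}XB^{1/2}\|\le\|AXB\|^{1/2}\|X\|^{1/2}$ with $A=B=|T|$ and $X=U$: this gives $\|\widetilde{T}\|\le\|\,|T|U|T|\,\|^{1/2}=\|T^2\|^{1/2}$. Thus (ii) gives $\|T\|^2=\|\widetilde{T}\|^2\le\|T^2\|\le\|T\|^2$, which is (iii).

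\emph{(iii)$\Rightarrow$(ii).} I would argue with vectors. Take unit vectors $x_n$ with $\|T^2x_n\|\to\|T\|^2$; then $\|Tx_n\|\to\|T\|$ and $\|T(Tx_n)\|/\|Tx_n\|\to\|T\|$. Set $y_n=|T|^{1/2}x_n/\||T|^{1/2}x_n\|$. Since $\||T|^{1/2}x_n\|^2=\langle|T|x_n,x_n\rangle\le\|T\|$, we have
\[
\|\widetilde{T}y_n\|=\frac{\|\,|T|^{1/2}U|T|x_n\|}{\||T|^{1/2}x_n\|}\ge\frac{\|\,|T|^{1/2}U|T|x_n\|}{\|T\|^{1/2}}.
\]
Next, $\|\,|T|^{1/2}z\|^2=\langle|T|z,z\rangle\ge\|\,|T|z\|^2/\|T\|$, where $z=U|T|x_n=Tx_n$. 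This gives $\|\,|T|^{1/2}Tx_n\|\ge\|T^2x_n\|/\|T\|^{1/2}$. Combining the two estimates, $\|\widetilde{T}y_n\|\ge\|T^2x_n\|/\|T\|\to\|T\|$, so $\|\widetilde{T}\|\ge\|T\|$ and (ii) follows.

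The main obstacle is the inequality $\|\widetilde{T}\|^2\le\|T^2\|$ in (ii)$\Rightarrow$(iii). If quoting Heinz is unwanted, I would derive it from Lemma \ref{Lemma9} applied to $P=U$, weighted by $|T|^{1/2}$, using the mixed Cauchy–Schwarz form.
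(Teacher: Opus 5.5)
Your proof is correct and has the same skeleton as the paper's. Both prove (i)$\Rightarrow$(ii) by (\ref{32}) and (ii)$\Rightarrow$(iii) by Yamazaki's inequality $\|\widetilde{T}\|^2\le\|T^2\|$. Both get back from (iii) using $\|T^2\|\le\|T\|\,\|\widetilde{T}\|$ together with $\|\widetilde{T}\|\le\|\widehat{T}\|$; the paper does this as (iii)$\Rightarrow$(i) in one step, while you route it through (ii) and (\ref{13}).

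Only the justifications of the two key inequalities differ. The paper avoids Heinz and writes $\|\widetilde{T}\|^2=\|\widetilde{T}\widetilde{T}^*\|=\|\,|T|^{1/2}|T^*|\,|T|^{1/2}\|=r_{\sigma}(|T|\,|T^*|)\le\|\,|T|\,|T^*|\,\|=\|T^2\|$, using commutativity of the spectral radius. This is the elementary substitute you were after in your last sentence. The Lemma~\ref{Lemma9} route with $P=U$ would not work: it only gives $\|\widetilde{T}\|^2\le\|T\|\,\|\,|T|^{1/2}UU^*|T|^{1/2}\|$, which can exceed $\|T^2\|$. For the reverse inequality the paper uses the factorization $|T|\,|T^*|=|T|^{1/2}\,\widetilde{T}\,|T|^{1/2}U^*$. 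Your vector argument proves the same bound $\|T^2\|\le\|T\|\,\|\widetilde{T}\|$. It also follows in one line from $T^2=U|T|^{1/2}\,\widetilde{T}\,|T|^{1/2}$ and $\|U|T|^{1/2}\|=\|T\|^{1/2}$.

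Three corrections to the write-up.

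\emph{The first display in (ii)$\Rightarrow$(iii) is false as written.} Its middle operator is $|T|^{1/2}U|T|U|T|^{1/2}=\widetilde{T}^{\,2}$, and $\|\widetilde{T}\|^2\le\|\widetilde{T}^{\,2}\|$ fails in general. For the $3\times3$ nilpotent Jordan block, $\widetilde{T}\neq0$ but $\widetilde{T}^{\,2}=0$. Rewriting $\|\widetilde{T}\|^2$ as $\|\widetilde{T}^*\widetilde{T}\|$ does not rescue it. If you meant $\widetilde{T}\widetilde{T}^*=|T|^{1/2}U|T|U^*|T|^{1/2}$, the first step becomes an equality and you recover the paper's chain. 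Otherwise delete the display and keep the Heinz argument with $A=B=|T|$, $X=U$, which is correct.

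\emph{The announced cycle is not what you prove.} You actually prove (i)$\Leftrightarrow$(ii) and (ii)$\Leftrightarrow$(iii), which suffices.

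\emph{In (iii)$\Rightarrow$(ii), handle the degenerate cases.} Set aside $T=0$, and note that $|T|^{1/2}x_n\neq0$ for large $n$, so that $y_n$ is defined.
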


\begin{proof}: $(i)\Rightarrow(ii)$ Suppose that $\|\widehat{T}\|=\|T\|$. Then  from (\ref{32}), we directly get $\|T\|\le \|\widetilde{T}\|.$ We always have $\|\widetilde{T}\|\le \|T\|.$ Hence, $\|\widetilde{T}\|=\|T\|.$

$(ii)\Rightarrow(iii)$
Suppose that $\|\widetilde{T}\|=\|T\|$. Then
\begin{align*}
\|T\|^{2}
&=\|\widetilde{T}\|^{2}
=\|\widetilde{T}\widetilde{T}^{*}\|  \\
&=\|(|T|^{1/2}U|T|^{1/2})(|T|^{1/2}U|T|^{1/2})^{*}\| \\
&=\||T|^{1/2}U|T|U^{*}|T|^{1/2}\| \\
&=\||T|^{1/2}|T^{*}||T|^{1/2}\|.\end{align*}

Using the facts that $r(.)$ is commutative and  for a self-adjoint operator $Y$, we have $r_{\sigma}(Y)=\|Y\|$, and $r_{\sigma}(Y)\le\|Y\|$ for any bounded linear operator $Y$, we get 

\begin{align*}
 \|T\|^{2}
=r_{\sigma}(|T|^{1/2}|T^{*}||T|^{1/2})
=r_{\sigma}(|T||T^{*}|)
\le \||T||T^{*}|\|
=\|T^2\|\le\|T\|^{2}.   
\end{align*}

So, $\|T\|^{2}=\|T^{2}\|.$

$(iii)\Rightarrow(i)$ Assume that $\|T\|^{2}=\|T^{2}\|$. Now,
\begin{align*}
\|T\|^{2}
&=\|T^{2}\|
=\||T||T^{*}|\|  \\
&=\||T|U|T|U^{*}\|
=\||T|^{1/2}(|T|^{1/2}U|T|^{1/2})|T|^{1/2}U^*\| \\
&\le
\||T|^{1/2}\|
\,\||T|^{1/2}U|T|^{1/2}\|
\,\||T|^{1/2}U^*\| \\
&=\|T\|\,
\|\widehat{T}\|.
\end{align*}
By using the fact $\|A^{\frac{1}{2}}XB^{\frac{1}{2}}\|\le \frac{1}{2}\|AX+XB\|$ for $X\in \mathcal{B}(\mathcal{H})$ and $A,B\in \mathcal{B}(\mathcal{H})^{+},$ we obtain 
\begin{align*}
\|T\|^2\le \|T\|\, \|\frac{1}{2} U|T|+\frac{1}{2} |T|U\|=\|T\|\,
\|\widehat{T}\|.    
\end{align*}
Since always $\|\widehat{T}\|\le \|T\|$, we conclude that
$\|T\|^{2}\le \|T\|\,
\|\widehat{T}\|\le \|T\|^2.$
Hence, $\|\widehat{T}\|= \|T\|.$
\end{proof}

Through the utilization of an integral numerical-radius representation of an associated operator matrix, the subsequent theorem offers a revised norm estimate for the $\lambda$-mean transform. This provides a more distinct relationship between $\|M_{\lambda}(T)\|,$ $r_{\omega}(T),$ $\|T\|$ and $\|T^{D}\|.$ It also refines the inequality (\ref{9}).

\begin{theorem}\label{Theorem8}
 Let $T\in \mathcal{B}(\mathcal{H})$. Then for $\lambda\in [0,1]$, we have
\begin{equation}\begin{aligned}\label{33}
\|M_{\lambda}(T)\|
&\le
2\int_{0}^{1}
r_{\omega}
\left(
\begin{bmatrix}
0 &
\lambda tT +(1-\lambda)(1-t)T^{D}
\\
(1-\lambda)t(T^{D})^{*}
+\lambda(1-t)T^{*}
&
0
\end{bmatrix}
\right)dt
\\&\le
2\,r_{\omega}
\left(
\begin{bmatrix}
0 & \lambda T
\\
(1-\lambda)(T^{D})^{*} & 0
\end{bmatrix}
\right)\\&\le\sqrt{\left\Vert\lambda^2 \vert{}T\vert{}^2 + (1-\lambda)^2 \vert{}T^D\vert{}^2\right\Vert + 2\lambda(1-\lambda) r_{\omega}\left(  T^* T^D \right)}
\\&\le
\lambda \|T\|
+
(1-\lambda)\|T^{D}\|.\end{aligned}\tag{33}
 \end{equation}  
\end{theorem}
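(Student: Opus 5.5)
The plan is to put the whole chain on one off-diagonal operator matrix. Set $X=\lambda T$ and $Y=(1-\lambda)T^{D}$, so that $M_\lambda(T)=X+Y$, and put
\[
K=\begin{bmatrix}0 & X\\ Y^{*} & 0\end{bmatrix},
\qquad\text{so that}\qquad
K^{*}=\begin{bmatrix}0 & Y\\ X^{*} & 0\end{bmatrix}.
\]
The matrix inside the integral in (\ref{33}) is then exactly $K(t)=tK+(1-t)K^{*}$.

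\emph{First inequality.} I will show the pointwise bound $r_\omega(K(t))\ge \frac12\|M_\lambda(T)\|$ for every $t$ and then integrate over $[0,1]$. For a matrix $\begin{bmatrix}0&A\\ B&0\end{bmatrix}$, test against the unit vector $\frac{1}{\sqrt2}(e^{i\varphi}y,x)$ with $\|x\|=\|y\|=1$. The quadratic form becomes $\frac12\big(e^{-i\varphi}\langle Ax,y\rangle+e^{i\varphi}\overline{\langle A^{*}y,x\rangle}\,\big)$ up to conjugation conventions. After fixing conventions, this amounts to $r_\omega\ge \frac12|\langle (A+B^{*})x,y\rangle|$; taking the supremum gives $\frac12\|A+B^{*}\|$. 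Here $A=tX+(1-t)Y$ and $B^{*}=tY+(1-t)X$, so $A+B^{*}=X+Y=M_\lambda(T)$, independently of $t$.

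\emph{Second inequality.} Since $r_\omega$ is a norm and $r_\omega(K^{*})=r_\omega(K)$, we get $r_\omega(tK+(1-t)K^{*})\le t\,r_\omega(K)+(1-t)r_\omega(K)=r_\omega(K)$. Integrating over $t$ gives the second bound.

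\emph{Third inequality.} This is the step I expect to need most care. I will use the known identity
\[
r_\omega\!\left(\begin{bmatrix}0&X\\ Y^{*}&0\end{bmatrix}\right)=\tfrac12\sup_{\theta}\|e^{i\theta}X+e^{-i\theta}Y\|.
\]
If only the inequality ``$\le$'' is available, that direction is all that is needed, and it follows from $|\langle Xx_2,x_1\rangle+\langle Y^{*}x_1,x_2\rangle|$ together with the AM--GM bound $\|x_1\|\|x_2\|\le\frac12$.

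Next, set $Z_\theta=e^{i\theta}X+e^{-i\theta}Y$ and write $\|Z_\theta\|^{2}=\|Z_\theta^{*}Z_\theta\|$. This gives
\[
\|Z_\theta\|^{2}=\|X^{*}X+Y^{*}Y+2\,\mathrm{Re}(e^{-2i\theta}X^{*}Y)\|
\le \|X^{*}X+Y^{*}Y\|+2\,r_\omega(X^{*}Y),
\]
using $\|\mathrm{Re}(e^{i\phi}W)\|\le r_\omega(W)$. Substituting $X^{*}X=\lambda^2|T|^2$, $Y^{*}Y=(1-\lambda)^2|T^{D}|^2$ and $X^{*}Y=\lambda(1-\lambda)T^{*}T^{D}$ yields the square-root expression. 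Taking the supremum over $\theta$ then gives $2r_\omega(K)$ bounded by that expression.

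\emph{Last inequality.} Apply the triangle inequality together with $r_\omega(T^{*}T^{D})\le\|T\|\,\|T^{D}\|$. The radicand is then at most $\lambda^2\|T\|^2+(1-\lambda)^2\|T^{D}\|^2+2\lambda(1-\lambda)\|T\|\|T^{D}\|=(\lambda\|T\|+(1-\lambda)\|T^{D}\|)^2$, which is the final bound.
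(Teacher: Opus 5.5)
Your proof is correct, and from the second inequality onward it is the paper's argument. You use the same matrix $K$ (the paper's $P$), the triangle inequality for $r_\omega$ with $r_\omega(K^*)=r_\omega(K)$, and the identity $r_\omega\bigl(\begin{bmatrix}0&X\\ Y^*&0\end{bmatrix}\bigr)=\frac12\sup_\theta\|e^{i\theta}X+e^{-i\theta}Y\|$. You then expand $Z_\theta^*Z_\theta$, bound the real-part term by $r_\omega(X^*Y)$, and finish with the same triangle-inequality estimate.

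The one genuine difference is the first inequality. The paper writes $\|M_\lambda(T)\|=r_\omega(P+P^*)$ and transfers the scalar Hermite--Hadamard-type bound $|a+b|\le 2\int_0^1|ta+(1-t)b|\,dt$ to obtain $r_\omega(S+S^*)\le 2\int_0^1 r_\omega(tS+(1-t)S^*)\,dt$, so it bounds only the integral as a whole. You prove the stronger pointwise statement $r_\omega(K(t))\ge\frac12\|M_\lambda(T)\|$ for every $t\in[0,1]$, which also shows the integrand is minimized at $t=\frac12$. The quickest justification is that $\operatorname{Re}K(t)=\frac12(K+K^*)$ for all $t$. Hence $r_\omega(K(t))\ge\|\operatorname{Re}K(t)\|=\frac12\|K+K^*\|=\frac12\|M_\lambda(T)\|$.

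Your vector computation also works, but the displayed quadratic form has a slip. For $v=\frac{1}{\sqrt{2}}(e^{i\varphi}y,x)$ one gets $\langle Mv,v\rangle=\frac12\bigl(e^{-i\varphi}\langle Ax,y\rangle+e^{i\varphi}\overline{\langle B^*x,y\rangle}\bigr)$. Your term $\overline{\langle A^*y,x\rangle}$ equals $\langle Ax,y\rangle$ and loses $B$ entirely. You also need to choose $\varphi$ depending on $x,y$ so that $e^{2i\varphi}\overline{\langle B^*x,y\rangle}=\langle B^*x,y\rangle$. The same implicit phase choice is needed in your sketch of the ``$\le$'' direction of the off-diagonal identity, and it is routine there too.
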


\begin{proof}:
For any $\lambda\in [0,1]$, we have
$$ M_{\lambda}(T)
=
\lambda T +(1-\lambda)T^{D},
$$
where $T=U|T|$ is the polar decomposition of $T$ and $T^{D}=|T|U.$

Since the operator norm of a self-adjoint off-diagonal matrix $\begin{bmatrix} 0 & L \\ L^* & 0 \end{bmatrix}$ is simply $\Vert{}L\Vert{}$, taking $L = \lambda T +(1-\lambda)T^{D}$ yields:
$$
\|M_{\lambda}(T)\|
=\|\lambda T +(1-\lambda)T^{D}\|=
\|P+P^{*}\|,
$$
where
\[
P=
\begin{bmatrix}
0 & \lambda T
\\
(1-\lambda)(T^{D})^{*} & 0
\end{bmatrix}.
\]

By using the basic result
$
|a+b|
\le
2\int_{0}^{1}|ta+(1-t)b|\,dt
\le
|a|+|b|,
$
where $a,b\in \mathbb{C}$, one can easily obtain that
$$
r_{\omega}(S+S^{*})
\le
2\int_{0}^{1}
r_{\omega}\bigl(tS+(1-t)S^{*}\bigr)\,dt
\le
2r_{\omega}(S).
$$

So,
\begin{align*}
\|M_{\lambda}(T)\|
&=r_{\omega}(P+P^{*})
\\
&\le
2\int_{0}^{1}
r_{w}
\left(
\begin{bmatrix}
0 &
\lambda tT +(1-\lambda)(1-t)T^{D}
\\[0.2cm]
(1-\lambda)t(T^{D})^{*}
+\lambda(1-t)T^{*}
&
0
\end{bmatrix}
\right)dt
\\
&\le
2\,r_{w}
\left(
\begin{bmatrix}
0 & \lambda T
\\[0.2cm]
(1-\lambda)(T^{D})^{*} & 0
\end{bmatrix}
\right).
\end{align*}

A well-known identity for the numerical radius of off-diagonal block matrices for any $A, B \in \mathcal{B}(\mathcal{H})$ is:  $$r_{\omega} \left( \begin{bmatrix} 0 & X \\ Y^* & 0 \end{bmatrix} \right) = \frac{1}{2} \sup_{\theta \in \mathbb{R}} \left\Vert{} e^{i\theta} X + e^{-i\theta} Y \right\Vert.$$

By setting $X = \lambda T$ and $Y = (1-\lambda) T^D$, we immediately get

\begin{align*}
\left\Vert M_{\lambda} (T)\right\Vert &\le   \sup_{\theta \in \mathbb{R}} \left\Vert{} e^{i\theta} (\lambda T) + e^{-i\theta}(1-\lambda) T^{D} \right\Vert{}\\&=\sup_{\theta \in \mathbb{R}}\sqrt{\left\Vert\lambda^2 \vert{}T\vert{}^2 + (1-\lambda)^2 \vert{}T^D\vert{}^2 + 2\lambda(1-\lambda) \operatorname{Re}\left( e^{-2i\theta} T^* T^D \right)\right\Vert}\\& \le \sup_{\theta \in \mathbb{R}}\sqrt{\left\Vert\lambda^2 \vert{}T\vert{}^2 + (1-\lambda)^2 \vert{}T^D\vert{}^2\right\Vert + 2\lambda(1-\lambda) \left\Vert\operatorname{Re}\left( e^{-2i\theta} T^* T^D \right)\right\Vert}.
\end{align*}

By using the result: $r_{\omega}(X)=\sup_{\theta \in \mathbb{R}}\left\Vert\operatorname{Re}\left( e^{i\theta} T^* T^D \right)\right\Vert$ for any $X \in \mathcal{B}(\mathcal{H}),$ we obtain the third inequality. 

To establish the fourth inequality, we use the result $r_{\omega}(XY)\le \|XY\|\le\|X\| \|Y\|$ for any $X, Y\in \mathcal{B}(\mathcal{H})$.

\end{proof}

For $i=1,2,\ldots,n$, let $\mathcal{H}_i$ stand for a complex Hilbert space with the inner product $\langle .,. \rangle_i$.  Let $\mathbb{B}(\mathcal{H}_j, \mathcal{H}_i)$ ($1\leq i,j \leq n$) denote the  spaces of all bounded linear operators from $\mathcal{H}_j$ to $\mathcal{H}_i$, equipped with the operator norm topology. We define $\mathbb{H}=\bigoplus_{i=1}^n \mathcal{H}_i$. Consequently, $\mathbb{H}$ is a Hilbert space.

For $A, B, C, D \in \mathcal{B}(\mathcal{H})$, the $2 \times 2$ operator matrix
$\begin{bmatrix}
A & B \\
C & D
\end{bmatrix}
\in \mathcal{B}(\mathcal{H} \oplus \mathcal{H}),
$
and is defined by
$\begin{bmatrix}
A & B \\
C & D
\end{bmatrix}
x
=
\begin{pmatrix}
A x_1 + B x_2 \\
C x_1 + D x_2
\end{pmatrix},
\quad \forall x =
\begin{pmatrix}
x_1 \\
x_2
\end{pmatrix}
\in \mathcal{H} \oplus \mathcal{H}.
$

The subsequent theorem provides a computable upper bound for $\|M_{\lambda}(S)\|$ where $S$ is an off-diagonal operator matrix, therefore connecting the norm of the transformation to the norm values of its component operators.
\begin{theorem}\label{Theorem9}
 Let
$S=
\begin{bmatrix}
0 & A \\
B & 0
\end{bmatrix}
\in \mathcal{B}(\mathcal{H}\oplus\mathcal{H}).
$
Then for any $\lambda\in[0,1]$, we have
\begin{align}\label{34}
\|M_{\lambda}(S)\|
\le
\max
\Big\{
\|\lambda |A^{*}|+(1-\lambda)|B|\|,
\,
\|\lambda |B^{*}|+(1-\lambda)|A|\|
\Big\}.
\tag{34}\end{align}  
\end{theorem}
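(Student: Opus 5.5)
The plan is to compute $M_{\lambda}(S)$ explicitly from the polar decompositions of the entries $A$ and $B$. The transform $M_{\lambda}(S)$ will come out as an off-diagonal operator matrix, and each off-diagonal entry factors as a positive combination times a partial isometry.

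First, I determine the polar decomposition of $S$. Let $A=V|A|$ and $B=W|B|$ be the canonical polar decompositions. Since
\[
S^{*}S=\begin{bmatrix} B^{*}B & 0\\ 0 & A^{*}A\end{bmatrix},
\]
uniqueness of the positive square root gives $|S|=\begin{bmatrix} |B| & 0\\ 0 & |A|\end{bmatrix}$. I then set
\[
U_S=\begin{bmatrix} 0 & V\\ W & 0\end{bmatrix}.
\]
A direct multiplication gives $U_S|S|=\begin{bmatrix} 0 & V|A|\\ W|B| & 0\end{bmatrix}=S$. Moreover,
\[
U_S^{*}U_S=\begin{bmatrix} W^{*}W & 0\\ 0 & V^{*}V\end{bmatrix},
\]
which is the projection onto $\overline{\operatorname{ran}}|B|\oplus\overline{\operatorname{ran}}|A|=\overline{\operatorname{ran}}|S|$. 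Hence $U_S$ is a partial isometry with $\ker U_S=\ker|S|$, so $S=U_S|S|$ is the canonical polar decomposition. I expect this verification to be the main point of care, because the Duggal transform depends on using the canonical partial isometry.

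Next, the Duggal transform is
\[
S^{D}=|S|U_S=\begin{bmatrix} 0 & |B|V\\ |A|W & 0\end{bmatrix}.
\]
Therefore
\[
M_{\lambda}(S)=\begin{bmatrix} 0 & \lambda A+(1-\lambda)|B|V\\ \lambda B+(1-\lambda)|A|W & 0\end{bmatrix}.
\]
Using the identity $A=V|A|=|A^{*}|V$ (the fact $U|A|^{p}=|A^{*}|^{p}U$ already used in the paper), and likewise $B=|B^{*}|W$, the two entries factor as
\[
\bigl(\lambda|A^{*}|+(1-\lambda)|B|\bigr)V
\quad\text{and}\quad
\bigl(\lambda|B^{*}|+(1-\lambda)|A|\bigr)W.
\]

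Finally, I apply the fact quoted in the proof of Theorem \ref{Theorem6}: the norm of an off-diagonal block matrix is the maximum of the norms of its entries. Combined with $\|V\|,\|W\|\le 1$, this gives
\[
\|M_{\lambda}(S)\|=\max\bigl\{\|(\lambda|A^{*}|+(1-\lambda)|B|)V\|,\ \|(\lambda|B^{*}|+(1-\lambda)|A|)W\|\bigr\},
\]
and each term is at most the corresponding norm in (\ref{34}). This yields (\ref{34}).
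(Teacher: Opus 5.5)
Your proof is correct and is essentially the paper's argument: both rest on the factorization $\lambda U|T|+(1-\lambda)|T|U=(\lambda|T^*|+(1-\lambda)|T|)U$, on dropping the partial isometry, and on the fact that $|S|$ and $|S^*|$ are block diagonal, so the resulting block-matrix norm is a maximum of entry norms. The only difference is bookkeeping. You write $M_{\lambda}(S)$ out entrywise, factor each off-diagonal entry, and use the off-diagonal norm identity; along the way you also check that $U_S$ is the canonical partial isometry, which the paper takes for granted. The paper instead factors at the level of $S$ and reads off the norm of $\lambda|S^*|+(1-\lambda)|S|$ directly.
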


\begin{proof}:
For any $\lambda\in[0,1]$, we have
\[
M_{\lambda}(T)
=
\lambda T+(1-\lambda)T^{D},
\]
where $T=U|T|$ is the polar decomposition of $T$ and
$T^{D}=|T|U$.

Using the fact that for $t>0$,
\[
U|T|^{t}=|T^*|^{t}U,
\]
we obtain
\begin{align*}
\|M_{\lambda}(T)\|
&=
\|\lambda U|T|+(1-\lambda)|T|U\|  \\
&=
\|(\lambda |T^*|+(1-\lambda)|T|)U\|  \\
&\le
\|\lambda |T^{*}|+(1-\lambda)|T|\|.
\end{align*}

 By simple calculations, we get
\[
|S|=
\begin{bmatrix}
|B| & 0 \\
0 & |A|
\end{bmatrix},
\qquad
|S^{*}|=
\begin{bmatrix}
|A^*| & 0 \\
0 & |B^*|
\end{bmatrix}.
\]

Hence,
\begin{equation*}
    \|\lambda |S^{*}|+(1-\lambda)|S|\|=
\left\|
\begin{bmatrix}
\lambda |A^*|+(1-\lambda)|B| & 0 \\
0 & \lambda |B^*|+(1-\lambda)|A|
\end{bmatrix}
\right\|.
\end{equation*}
Since the operator norm of a block diagonal matrix $\begin{bmatrix} X & 0 \\ 0 & Y \end{bmatrix}$ is $\max\{\|X\|, \|Y\|$, the required inequality (\ref{34}) follows immediately.

\end{proof}
Let $T=U|T|$ be the polar decomposition of $T$ and 
$T^{D}=|T|U$.
Setting $A = B = T$, the operator $S$ reduces to

$ S= \begin{bmatrix} 0 & T \\ T & 0 \end{bmatrix}=  \begin{bmatrix} 0 & U \\ U & 0 \end{bmatrix} \begin{bmatrix} |T| & 0 \\ 0 & |T| \end{bmatrix}$
 \text{and} $S^{D} = \begin{bmatrix} |T| & 0 \\ 0 & |T| \end{bmatrix}\begin{bmatrix} 0 & U \\ U & 0 \end{bmatrix}=\begin{bmatrix} 0 & T^{D} \\ T^{D} & 0 \end{bmatrix}.$

Utilizing the norm identity for off-diagonal block matrices, $\left\| \begin{bmatrix} 0 & X \\ X & 0 \end{bmatrix} \right\| = \|X\|$, we obtain $\|M_{\lambda}(S)\| = \|\lambda S + (1-\lambda) S^{D}\|=\left \|\begin{bmatrix} 0 &  M_{\lambda}(T)\\ M_{\lambda}(T) & 0 \end{bmatrix}\right \|= \| M_{\lambda}(T)\|$. 

Moreover, substituting $A = B = T$ into the right-hand side of inequality (34) yields
\[
\max\Big\{ \|\lambda |T^{*}| + (1-\lambda)|T|\|, \, \|\lambda |T^{*}| + (1-\lambda)|T|\| \Big\} = \|\lambda |T^{*}| + (1-\lambda)|T|\|.
\]

Next, we find a necessary and sufficient criterion for the norm-preserving characteristic of the $\lambda$-mean transform.

\begin{theorem}
Let $T\in \mathcal{B}(\mathcal{H})$. Then
\[
\|M_{\lambda}(T)\|=\|T\|
\quad \text{for } \lambda\in (0,1)
\]
if and only if
\[
\|T^{2}\|=\|T\|^{2}.
\]
\end{theorem}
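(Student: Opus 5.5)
Fix $\lambda\in(0,1)$. The plan is to prove both directions through the factorization already used in the proof of Theorem \ref{Theorem9}, namely $M_\lambda(T)=(\lambda|T^*|+(1-\lambda)|T|)U$, together with Theorem \ref{Theorem7}.

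\emph{($\Leftarrow$)} Assume $\|T^2\|=\|T\|^2$. Write $M_\lambda(T)=\lambda T+(1-\lambda)T^D$ and aim for a lower bound by $\|T\|$. The same trick as in $(iii)\Rightarrow(i)$ of Theorem \ref{Theorem7} seems most direct. Note that
\[
|T|^{1/2}M_\lambda(T)|T|^{1/2}U^*=\lambda|T|^{1/2}U|T|^{3/2}U^*+(1-\lambda)|T|^{3/2}U|T|^{1/2}U^*.
\]
This is awkward, so I would instead try to show $\|\widehat{T}\|\le\|M_\lambda(T)\|$, or more simply compare with the Aluthge transform. Indeed, by Zamani's lower bound (\ref{9}) only the factor $2\sqrt{\lambda-\lambda^2}$ appears, which is too weak. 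Better is the Heinz-type inequality used in Theorem \ref{Theorem7}, in its weighted form
\[
\|A^{\nu}XB^{1-\nu}\|\le\|AX\|^{\nu}\|XB\|^{1-\nu}\quad\text{or}\quad \|A^{1/2}XB^{1/2}\|\le\|\lambda AX+(1-\lambda)XB\|\ ?
\]
The second form is false in general, so the cleanest route is a norming-sequence argument. By Theorem \ref{Theorem7}, $\|\widetilde{T}\|=\|T\|$, and from the computation there, $\||T|\,|T^*|\|=\|T\|^2$. Choose unit vectors $x_n$ with $\||T|\,|T^*|x_n\|\to\|T\|^2$. Since $\||T|\|=\||T^*|\|=\|T\|$, this forces $|T^*|x_n-\|T\|x_n\to0$ and $|T|(|T^*|x_n)/\|T\|-\|T\|x_n\to 0$, hence also $|T|x_n-\|T\|x_n\to0$. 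Indeed, if $P\ge0$ with $\|P\|=c$ and $\|Py\|\to c\|y\|$, then $(c^2-P^2)y\to0$, and therefore $Py-cy\to0$. It then follows that $(\lambda|T^*|+(1-\lambda)|T|)x_n-\|T\|x_n\to0$. Since $U^*$ restricted to the range of $|T^*|$ is isometric and the $x_n$ are asymptotically in $\overline{\mathrm{ran}}|T^*|$, set $y_n=U^*x_n$. Then $M_\lambda(T)^*x_n=U^*(\lambda|T^*|+(1-\lambda)|T|)x_n\approx\|T\|U^*x_n$, which has norm tending to $\|T\|$. Therefore $\|M_\lambda(T)\|\ge\|T\|$, and with (\ref{10}) we get equality.

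\emph{($\Rightarrow$)} Assume $\|M_\lambda(T)\|=\|T\|$. From the factorization, $\|T\|\le\|\lambda|T^*|+(1-\lambda)|T|\|\le\|T\|$, so there exist unit $x_n$ with $\langle(\lambda|T^*|+(1-\lambda)|T|)x_n,x_n\rangle\to\|T\|$. Since each of $\langle|T^*|x_n,x_n\rangle$ and $\langle|T|x_n,x_n\rangle$ is at most $\|T\|$ and $\lambda\in(0,1)$, both tend to $\|T\|$. By the same positive-operator argument, $|T^*|x_n-\|T\|x_n\to0$ and $|T|x_n-\|T\|x_n\to0$. Then $|T|\,|T^*|x_n\approx\|T\|^2x_n$, so $\|T^2\|=\||T|\,|T^*|\|\ge\|T\|^2$, and the reverse inequality is trivial.

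The main obstacle is the step in ($\Leftarrow$) where approximate eigenvectors are transported through $U^*$. One must check that $\|U^*x_n\|\to1$, which holds because $x_n\approx|T^*|x_n/\|T\|\in\mathrm{ran}|T^*|$ and $U^*$ is isometric on $\overline{\mathrm{ran}}|T^*|$. This is also why the proof works with $M_\lambda(T)^*$ rather than $M_\lambda(T)$. Alternatively, one can avoid $U^*$ altogether. The same vectors give $\|\lambda|T^*|+(1-\lambda)|T|\|=\|T\|$, and then $\|M_\lambda(T)\|\ge\|M_\lambda(T)^*x_n\|$ with the estimate above.
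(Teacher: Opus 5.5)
Your proof is correct. It shares the paper's skeleton: both arguments use the factorization $M_\lambda(T)=(\lambda|T^*|+(1-\lambda)|T|)U$ and the identity $\|T^2\|=\|\,|T|\,|T^*|\,\|$. The key step is handled differently.

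The paper treats that step as a black box. It quotes the equivalence $\|A+B\|=\|A\|+\|B\|\iff\|AB\|=\|A\|\,\|B\|$ for positive $A=\lambda|T^*|$, $B=(1-\lambda)|T|$. You instead construct unit vectors $x_n$ with $|T^*|x_n-\|T\|x_n\to0$ and $|T|x_n-\|T\|x_n\to0$, which in effect proves that lemma.

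These approximate eigenvectors carry information the bare norm equality does not, and it is needed in the direction $\|T^2\|=\|T\|^2\Rightarrow\|M_\lambda(T)\|=\|T\|$. The paper passes from $\|\lambda|T^*|+(1-\lambda)|T|\|=\|T\|$ to $\|M_\lambda(T)\|\ge\|T\|$ by inserting $UU^*$, justified by ``$UU^*=I$''. That holds only when $T$ has dense range. In general $UU^*$ is the projection onto $\overline{\mathrm{ran}}\,T$, and although $|T^*|UU^*=|T^*|$, the term $|T|UU^*$ can differ from $|T|$.

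You instead pass to $M_\lambda(T)^*=U^*(\lambda|T^*|+(1-\lambda)|T|)$. You then observe that $x_n\approx|T^*|x_n/\|T\|$ lies asymptotically in $\overline{\mathrm{ran}}\,|T^*|$, the initial space of $U^*$, so $\|U^*x_n\|\to1$. This is exactly what makes the step valid for arbitrary $T$. The paper's route is shorter given the cited lemma; yours is self-contained and closes a gap in the published argument.

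Minor points:
\begin{itemize}
\item The appeal to Theorem \ref{Theorem7} is unnecessary, since $\|T^2\|=\|\,|T|\,|T^*|\,\|$ holds for every $T$.
\item Your closing ``alternative'' does not avoid $U^*$, since $M_\lambda(T)^*x_n$ involves it, so it is the same argument.
\item The exploratory detour through Heinz-type inequalities and the unused $y_n=U^*x_n$ can be dropped.
\item The case $T=0$, where you divide by $\|T\|$, is trivial.
\end{itemize}
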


\begin{proof}:
If we take $A=B=T$ in (\ref{34}), then  it is clear that
\begin{align*}
\|M_{\lambda}(T)\|
\le
\|\lambda |T^{*}|+(1-\lambda)|T|\|
\le
\lambda\|T\|+(1-\lambda)\||T|\|
=
\|T\|.
\end{align*}

Let us assume that
$\|M_{\lambda}(T)\|=\|T\|
\quad \text{for } \lambda\in [0,1].
$
Then
\[\|M_{\lambda}(T)\|=
\|T\|
=
\|\lambda |T^{*}|+(1-\lambda)|T|\|
\le
\|T\|.
\]
So,
\[
\|\lambda |T^{*}|+(1-\lambda)|T|\|
=
\|T\|
=
\|\lambda |T^{*}|\|
+
\|(1-\lambda)|T|\|.
\]

Using the fact that if $A,B$ are two positive semidefinite operators, then
$\|A+B\|=\|A\|+\|B\|$
if and only if
$\|AB\|=\|A\|\,\|B\|,$
we get
\[
\|\lambda(1-\lambda)|T^{*}||T|\|
=
\|\lambda| T^{*}|\|\,\|(1-\lambda)|T|\|.
\]
Since $\|\,|T^{*}||T|\,\|=\|T^{2}\|,$ we obtain $\|T^{2}\|=\|T\|^{2}.$

Conversely, let us assume that $\|T^{2}\|=\|T\|^{2}.$
That is,
\[\|\lambda(1-\lambda)|T^{*}||T|\|
=\|(1-\lambda)|T|\|\,\|\lambda| T^{*}|\|.\]

Since $\lambda |T^{*}|$ and $(1-\lambda)|T|$ are positive operators, we get
\[\|\lambda |T^{*}|\|+\|(1-\lambda)|T|\|=\|\lambda|T^{*}|+(1-\lambda)|T|\|.\]

  Therefore,
\begin{align*}
\|T\|
&=
\|(\lambda|T^{*}|U+(1-\lambda)|T|U)U^*\|\qquad\text{(since, $UU^*=I$)}
\\
&\le
\|\lambda|T^{*}|U+(1-\lambda)|T|U\|
\\
&=\|\lambda U|T|+(1-\lambda)|T|U\|\qquad\text{(since, $U|T|=|T^*|U$)}\\&
=\|M_{\lambda}(T)\|
\le
\|T\|.
\end{align*}
This completes the proof.
  
\end{proof}

\begin{remark}
    For $\lambda = 1$, we have  $ M_{1}(T) = T$,  making this case trivial. When $\lambda = o$,  the above corollary fails to hold in general. To see this, consider the operator $ T = \begin{bmatrix} 0 & 5 \\ 7 & 0 \end{bmatrix}$.
Then $ |T| = \begin{bmatrix} 7 & 0 \\ 0 & 5 \end{bmatrix}$. From the polar decomposition of $T=U|T|,$ we get $U=\begin{bmatrix} 0 & 1 \\ 1 & 0 \end{bmatrix}.$
Then $M_{0}(T) = T^{D} =|T|U= \begin{bmatrix} 0 & 7 \\ 5 & 0 \end{bmatrix}$ and hence $\|T^{D}\| = \|T\| = 7$. A simple calculation gives $\|T^2\|=\left\|\begin{bmatrix}
  35&0\\0&35  \end{bmatrix} \right\|=35 \ne 49= \left\|\begin{bmatrix}
  0&5\\7&0 \end{bmatrix} \right\|^2=\|T\|^2. $

\end{remark}

Although norm inequalities for the $\lambda$-mean transform of general operators are well known, precise bounds fitting the $2 \times 2$ off-diagonal block structure have not appeared before. We close this gap by stating these bounds in the following theorem.

\begin{theorem}\label{Theorem10}
 Let
$T=
\begin{bmatrix}
0 & A \\
B & 0
\end{bmatrix}
\in \mathcal{B}(\mathcal{H}\oplus\mathcal{H}).
$
Then for any $\lambda\in[0,1]$, we have
\begin{align}\label{35}
(a)\,\|M_{\lambda}(T)\|
&\le\max\Big\{ \max(\Vert{}A\Vert{}, \Vert{}B\Vert{})^t, \, \max(\Vert{}A\Vert{}, \Vert{}B\Vert{})^{1-t} \Big\}\notag\\&\times \max\Big\{ \big\Vert{}\lambda \vert{}A^*\vert{}^t + (1-\lambda)\vert{}B\vert{}^{1-t}\big\Vert{}, \, \big\Vert{}\lambda \vert{}B^*\vert{}^t + (1-\lambda)\vert{}A\vert{}^{1-t}\big\Vert{} \Big\}.
\tag{35}\end{align}

\begin{align}\label{36}
(b)\,\|M_{\lambda}(T)\|
&\le\sqrt{\begin{aligned}
     &\max\Big\{ \big\Vert{}\lambda \vert{}A^*\vert{}^{2t} + (1-\lambda)\vert{}B\vert{}^{2(1-t)}\big\Vert{}, \big\Vert{}\lambda \vert{}B^*\vert{}^{2t} + (1-\lambda)\vert{}A\vert{}^{2(1-t)}\big\Vert{} \Big\} \\
     &\times\left(\lambda \max(\Vert{}A\Vert{}, \Vert{}B\Vert{})^{2(1-t)} + (1-\lambda)\max(\Vert{}A\Vert{}, \Vert{}B\Vert{})^{2t}\right).
\end{aligned}}
\tag{36}\end{align}

\begin{align}\label{37}
(c)\,\|M_{\lambda}(T)\|
&\le\Big( \max \Big\{ \big\| \lambda^2 |B| + (1-\lambda)^2 |A^D| \big\|, \, \big\| \lambda^2 |A| + (1-\lambda)^2 |B^D| \big\| \Big\}\notag\\
&\qquad \times \max \Big\{ \big\| \lambda^2 |A^*| + (1-\lambda)^2 |(B^D)^*| \big\|, \, \big\| \lambda^2 |B^*| + (1-\lambda)^2 |(A^D)^*| \big\| \Big\} \Big)^{1/2} \notag\\
&\quad + \lambda(1-\lambda) \Big( \max \Big\{ \big\| |B| + |A^D| \big\|, \, \big\| |A| + |B^D| \big\| \Big\}\notag \\
&\qquad \times \max \Big\{ \big\| |A^*| + |(B^D)^*| \big\|, \, \big\| |B^*| + |(A^D)^*| \big\| \Big\} \Big)^{1/2}
\tag{37}.\end{align}

\begin{align}\label{38}
(d)\,&\|M_{\lambda}(T)\|
\le
\frac{\max\{\lambda, 1-\lambda\}}{2} \left( \max\left\{ \| B \|^{2t}, \| A \|^{2t} \right\} + \max\left\{ \| A \|^{2(1-t)}, \| B\|^{2(1-t)} \right\} \right) \notag\\
& \qquad+\frac{\sqrt{\lambda(1-\lambda)}}{2} \max \left\{ \| |B|^t |A^*|^{1-t} +  |B|^{1-t} |A^*|^t \|, \| |A|^t |B^*|^{1-t}  + |A|^{1-t} |B^*|^t \| \right\}.
\tag{38}\end{align}
\end{theorem}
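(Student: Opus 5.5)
The plan is to obtain all four bounds by specializing the general estimates already proved, namely (\ref{26}), (\ref{27}), (\ref{28}) and (\ref{31}), to the block operator $T=\begin{bmatrix}0&A\\B&0\end{bmatrix}$. Each block quantity that appears will then be reduced to its entries using two identities: the norm of a block-diagonal operator is the maximum of the norms of its diagonal blocks, and the same holds for an off-diagonal one.

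\textbf{Step 1: block formulas.} Let $A=U_A|A|$ and $B=U_B|B|$ be polar decompositions. I will first record:
\begin{itemize}
\item[(i)] $T^*T=\operatorname{diag}(|B|^2,|A|^2)$, so $|T|^s=\operatorname{diag}(|B|^s,|A|^s)$ for every $s>0$, and $|T^*|^s=\operatorname{diag}(|A^*|^s,|B^*|^s)$;
\item[(ii)] $U=\begin{bmatrix}0&U_A\\U_B&0\end{bmatrix}$ is the partial isometry in the polar decomposition of $T$; I will check $U|T|=T$ and that the kernels match;
\item[(iii)] $\|T\|=\max(\|A\|,\|B\|)$, and $\|\,|T|^s\|=\max(\|A\|,\|B\|)^s$;
\item[(iv)] $T^D=|T|U=\begin{bmatrix}0&|B|U_A\\|A|U_B&0\end{bmatrix}$, whence $|T^D|$ and $|(T^D)^*|$ are block diagonal, with blocks computed from these entries.
\end{itemize}

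\textbf{Steps 2 and 3: parts (a) and (b).} With Step 1 in hand, (a) and (b) are immediate. In (\ref{26}) and (\ref{27}) the operators $\lambda|T^*|^t+(1-\lambda)|T|^{1-t}$ and $\lambda|T^*|^{2t}+(1-\lambda)|T|^{2(1-t)}$ are block diagonal. Their blocks are
\[
\lambda|A^*|^t+(1-\lambda)|B|^{1-t},\qquad \lambda|B^*|^t+(1-\lambda)|A|^{1-t},
\]
and the analogous ones with doubled exponents. Taking the maximum of the block norms, and replacing $\|T\|$ by $\max(\|A\|,\|B\|)$, gives (\ref{35}) and (\ref{36}).

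For (d), I start from (\ref{31}). The first term involves $\operatorname{diag}\big(|B|^{2t}+|B|^{2(1-t)},\,|A|^{2t}+|A|^{2(1-t)}\big)$. I bound its norm by the triangle inequality, giving $\max\{\|B\|^{2t},\|A\|^{2t}\}+\max\{\|A\|^{2(1-t)},\|B\|^{2(1-t)}\}$.

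For the second term I compute $\widehat{T}(t)=\tfrac12\big(|T|^tU|T|^{1-t}+|T|^{1-t}U|T|^t\big)$. It is off-diagonal, with $(1,2)$ entry
\[
\tfrac12\big(|B|^tU_A|A|^{1-t}+|B|^{1-t}U_A|A|^t\big).
\]
Using $U_A|A|^p=|A^*|^pU_A$, this entry equals
\[
\tfrac12\big(|B|^t|A^*|^{1-t}+|B|^{1-t}|A^*|^t\big)U_A,
\]
whose norm is at most that of the bracket, since $\|U_A\|\le1$. The $(2,1)$ entry is handled symmetrically with $A$ and $B$ interchanged. The off-diagonal norm identity then yields (\ref{38}).

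\textbf{Step 4 and main obstacle: part (c).} Part (c) should come from (\ref{28}), applied blockwise to $|T|$, $|T^*|$, $|T^D|$ and $|(T^D)^*|$, followed by taking maxima over the diagonal blocks. The difficulty is in the Duggal terms. From Step 1, $|T^D|^2=\operatorname{diag}\big(U_B^*|A|^2U_B,\;U_A^*|B|^2U_A\big)$, which involves the mixed products $|B|U_A$ and $|A|U_B$ rather than $A^D$ and $B^D$ themselves. So the delicate point is justifying that the diagonal entries of $|T^D|$ and $|(T^D)^*|$ are the ones written in (\ref{37}): $|A^D|$ paired with $|B|$, and $|B^D|$ paired with $|A|$.

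I would first try to identify these entries directly through the intertwining relations $U|T|^p=|T^*|^pU$, applied now to $A$ and $B$. If the literal identification fails, I would fall back on the observation that only the norms of sums of the form $\lambda^2|X|+(1-\lambda)^2|Y|$ enter (\ref{28}), and try to show that the norms of the actual blocks agree with those of the stated ones. If even that fails, I would record the correct block expressions in their place. Once these entries are settled, (\ref{37}) follows from (\ref{28}) exactly as in Steps 2 and 3.
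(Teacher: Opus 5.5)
\textbf{Parts (a), (b), (d).} This is the paper's own argument. You specialize (26), (27) and (31) to the block operator and use that a block-diagonal or off-diagonal operator has norm equal to the larger block norm, together with $\|T\|=\max(\|A\|,\|B\|)$. For (d) you add $U_A|A|^p=|A^*|^pU_A$ and $\|U_A\|\le 1$. Those parts are correct.

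\textbf{Part (c).} Your proposal leaves a gap here, and it cannot be filled: your suspicion is right, and (37) is false as stated.

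You computed correctly that $T^D=\begin{bmatrix}0&|B|U_A\\ |A|U_B&0\end{bmatrix}$. Hence $|T^D|=\operatorname{diag}\big(|\,|A|U_B\,|,\ |\,|B|U_A\,|\big)$ and $|(T^D)^*|=\operatorname{diag}\big(|(|B|U_A)^*|,\ |(|A|U_B)^*|\big)$. The paper's proof instead simply asserts $|T^D|=\operatorname{diag}(|A^D|,|B^D|)$ and $|(T^D)^*|=\operatorname{diag}(|(B^D)^*|,|(A^D)^*|)$, with $A^D=|A|U_A$ and $B^D=|B|U_B$. That is exactly the identification you doubted. Your first two fallbacks cannot work, since either would make (37) a consequence of the valid inequality (28).

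Here is a concrete counterexample. Take $\mathcal H=\mathbb C^2$, $A=\begin{bmatrix}0&1\\ 0&0\end{bmatrix}$ and $B=A^*$.
\begin{itemize}
\item $T$ is self-adjoint, so $M_\lambda(T)=T$ and $\|M_\lambda(T)\|=1$.
\item $|A|=|B^*|=\operatorname{diag}(0,1)$, $|B|=|A^*|=\operatorname{diag}(1,0)$, $U_A=A$ and $U_B=B$. Hence $A^D=B^D=0$.
\item The right-hand side of (37) is therefore $\lambda^2+\lambda(1-\lambda)=\lambda$.
\end{itemize}
So (37) would give $1\le\lambda$, which fails for every $\lambda\in[0,1)$. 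Already at $\lambda=0$, (37) reduces to $\|T^D\|\le\max\{\|A^D\|,\|B^D\|\}$, which uses the wrong Duggal transforms. In this example the true entries $|B|U_A=A$ and $|A|U_B=B$ are nonzero.

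What is provable is your third option. Apply (28) with the correct blocks, i.e.\ replace $|A^D|$, $|B^D|$, $|(B^D)^*|$, $|(A^D)^*|$ by $|\,|A|U_B\,|$, $|\,|B|U_A\,|$, $|(|B|U_A)^*|$, $|(|A|U_B)^*|$ respectively, in both the quadratic and the cross term. That yields a corrected inequality, not a proof of (c) as written.
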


\begin{proof}
Let  $A = U|A|$ and  $B = V|B|$ be the polar decompositions of  $A$ and  $B$, respectively.

For $T = \begin{bmatrix} 0 & A \\ B & 0 \end{bmatrix}=\begin{bmatrix} 0 & U \\ V & 0 \end{bmatrix} \begin{bmatrix} |B| & 0 \\ 0 & |A| \end{bmatrix},$ we have

\[|T| = \begin{bmatrix} |B| & 0 \\ 0 & |A| \end{bmatrix},\,  |T^*| = \begin{bmatrix} |A^*| & 0 \\ 0 & |B^*| \end{bmatrix},\, \vert{}T^D\vert{} = \begin{bmatrix} \vert{}A^D\vert{} & 0 \\ 0 & \vert{}B^D\vert{} \end{bmatrix}\] and \[\vert{}(T^D)^*\vert{} = \begin{bmatrix} \vert{}(B^D)^*\vert{} & 0 \\ 0 & \vert{}(A^D)^*\vert{} \end{bmatrix}, \]where $A^D = \vert{}A\vert{}U, \, B^D = \vert{}B\vert{}V.$
Now, we obtain 
\begin{align*}
 \hat{T}(t) &= \frac{|T|^t U |T|^{1-t} + |T|^{1-t} U |T|^t}{2}\\&= \frac{1}{2}\begin{bmatrix} 0 & |B|^t U |A|^{1-t} + |B|^{1-t} U |A|^t \\ |A|^t V |B|^{1-t} + |A|^{1-t} V |B|^t & 0 \end{bmatrix}.
\end{align*}

From the inequality (\ref{26}), we get that

$$\begin{aligned}
    \left\| M_\lambda \left( \begin{bmatrix} 0 & A \\ B & 0 \end{bmatrix} \right) \right\|& \le \max\left\{\left\| \begin{bmatrix} 0 & A \\ B & 0 \end{bmatrix}  \right\|^{t}, \left\|  \begin{bmatrix} 0 & A \\ B & 0 \end{bmatrix}  \right\|^{1-t} \right\}\\&\times \left\|\begin{bmatrix} \lambda \vert{}A^*\vert{}^t + (1-\lambda)\vert{}B\vert{}^{1-t} & 0 \\ 0 & \lambda \vert{}B^*\vert{}^t + (1-\lambda)\vert{}A\vert{}^{1-t} \end{bmatrix}\right\|\\
    &= \max\Big\{ \max(\Vert{}A\Vert{}, \Vert{}B\Vert{})^t, \, \max(\Vert{}A\Vert{}, \Vert{}B\Vert{})^{1-t} \Big\}\\&\times \max\Big\{ \big\Vert{}\lambda \vert{}A^*\vert{}^t + (1-\lambda)\vert{}B\vert{}^{1-t}\big\Vert{}, \, \big\Vert{}\lambda \vert{}B^*\vert{}^t + (1-\lambda)\vert{}A\vert{}^{1-t}\big\Vert{} \Big\}.
\end{aligned}$$

From the inequality (\ref{27}), we get that

$$\begin{aligned}
    &\left\| M_\lambda \left( \begin{bmatrix} 0 & A \\ B & 0 \end{bmatrix} \right) \right\| \le\sqrt{
  \begin{aligned} &\left\|
\begin{bmatrix} \lambda \vert{}A^*\vert{}^{2t} + (1-\lambda)\vert{}B\vert{}^{2(1-t)} & 0 \\ 0 & \lambda \vert{}B^*\vert{}^{2t} + (1-\lambda)\vert{}A\vert{}^{2(1-t)} \end{bmatrix}\right\|\\&\times\left(\lambda\left\| \begin{bmatrix} 0 & A \\ B & 0 \end{bmatrix}  \right\|^{2(1-t)}+(1-\lambda)\left\| \begin{bmatrix} 0 & A \\ B & 0 \end{bmatrix}  \right\|^{2t}\right)\end{aligned}}\\
& \qquad\qquad\qquad= \sqrt{\begin{aligned}
     &\max\Big\{ \big\Vert{}\lambda \vert{}A^*\vert{}^{2t} + (1-\lambda)\vert{}B\vert{}^{2(1-t)}\big\Vert{}, \big\Vert{}\lambda \vert{}B^*\vert{}^{2t} + (1-\lambda)\vert{}A\vert{}^{2(1-t)}\big\Vert{} \Big\} \\
     &\times\left(\lambda \max(\Vert{}A\Vert{}, \Vert{}B\Vert{})^{2(1-t)} + (1-\lambda)\max(\Vert{}A\Vert{}, \Vert{}B\Vert{})^{2t}\right). 
\end{aligned}}
\end{aligned}$$.

From the inequality (\ref{28}), we infer that 

$$\begin{aligned}
&\left\| M_\lambda \left( \begin{bmatrix} 0 & A \\ B & 0 \end{bmatrix} \right) \right\|
\le \sqrt{
  \begin{aligned}
    &\left\| \begin{bmatrix} \lambda^2 |B| + (1-\lambda)^2 |A^D| & 0 \\ 0 & \lambda^2 |A| + (1-\lambda)^2 |B^D| \end{bmatrix} \right\| \\
    &\quad \times \left\| \begin{bmatrix} \lambda^2 |A^*| + (1-\lambda)^2 |(B^D)^*| & 0 \\ 0 & \lambda^2 |B^*| + (1-\lambda)^2 |(A^D)^*| \end{bmatrix} \right\|
  \end{aligned}
} \\
&\qquad\qquad +\lambda(1-\lambda)\sqrt{\left\| \begin{bmatrix} |B| + |A^D| & 0 \\ 0 & |A| + |B^D| \end{bmatrix} \right\| \left\| \begin{bmatrix} |A^*| + |(B^D)^*| & 0 \\ 0 & |B^*| + |(A^D)^*| \end{bmatrix} \right\|}\\
&= \Big( \max \Big\{ \big\| \lambda^2 |B| + (1-\lambda)^2 |A^D| \big\|, \, \big\| \lambda^2 |A| + (1-\lambda)^2 |B^D| \big\| \Big\} \\
&\qquad \times \max \Big\{ \big\| \lambda^2 |A^*| + (1-\lambda)^2 |(B^D)^*| \big\|, \, \big\| \lambda^2 |B^*| + (1-\lambda)^2 |(A^D)^*| \big\| \Big\} \Big)^{1/2} \\
&\quad + \lambda(1-\lambda) \Big( \max \Big\{ \big\| |B| + |A^D| \big\|, \, \big\| |A| + |B^D| \big\| \Big\} \\
&\qquad \times \max \Big\{ \big\| |A^*| + |(B^D)^*| \big\|, \, \big\| |B^*| + |(A^D)^*| \big\| \Big\} \Big)^{\frac{1}{2}}.
\end{aligned}$$

From the inequality (31), we obtain

$$\begin{aligned}
&\left\| M_\lambda \left( \begin{bmatrix} 0 & A \\ B & 0 \end{bmatrix} \right) \right\| \\
&\le \frac{\max\{\lambda, 1 - \lambda\}}{2} \left( \max \left\{ \left\| |B|^{2t} \right\|, \left\| |A|^{2t} \right\| \right\} + \max \left\{ \left\| |A|^{2(1 - t)} \right\|, \left\| |B|^{2(1 - t)} \right\| \right\} \right) \\
& + \frac{\sqrt{\lambda(1 - \lambda)}}{2} \max \left\{ \left\| |B|^t U |A|^{1 - t} + |B|^{1 - t} U |A|^t \right\|, \left\| |A|^t V |B|^{1 - t} + |A|^{1 - t} V |B|^t \right\| \right\} \\
&\le \frac{\max\{\lambda, 1 - \lambda\}}{2} \left( \max \left\{ \|B\|^{2t}, \|A\|^{2t} \right\} + \max \left\{ \|A\|^{2(1 - t)}, \|B\|^{2(1 - t)} \right\} \right) \\
& + \frac{\sqrt{\lambda(1 - \lambda)}}{2} \max \left\{ \left\| |B|^t |A^*|^{1 - t} + |B|^{1 - t} |A^*|^t \right\|, \left\| |A|^t |B^*|^{1 - t} + |A|^{1 - t} |B^*|^t \right\| \right\},
\end{aligned}$$
where $U|A|^q U^* = |A^*|^q$ and $U^* |A^*|^q U = |A|^q$ and $U|A|^q = |A^*|^q U$ for $q > 0$.

\end{proof}

\begin{remark}
 Combining Theorem \ref{Theorem10} with the simple inequality $r_{\omega}(X) \le \Vert{}X\Vert{}$ we see that all the bounds in (\ref{35}–\ref{38}) are also bounds for $r_{\omega}(M_{\lambda}(T))$.   
\end{remark}

The following result, as an alternative version of Gelfand's spectral radius formula, indicates that $r_\sigma(T)$ can be obtained asymptotically by the transformation $M_\lambda$.

\begin{proposition} Let $\lambda \in (0,1)$ and $T \in B(H)$. Then $r_{\sigma}(T) = \lim\limits_{n \to \infty} \|M_\lambda(T^n)\|^{\frac{1}{n}}$.
\end{proposition}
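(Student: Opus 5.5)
The plan is to squeeze $\|M_\lambda(T^n)\|^{1/n}$ between two sequences that both converge to $r_\sigma(T)$. Both bounds come from Zamani's two-sided estimate (\ref{10}), applied to the operator $T^n$ instead of $T$. Gelfand's spectral radius formula then finishes the argument.

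\emph{Upper bound.} Fix $n\in\mathbb{N}$ and apply the right-hand inequality of (\ref{10}) to $T^n$. This gives $\|M_\lambda(T^n)\|\le \|T^n\|$. Taking $n$-th roots and using Gelfand's formula $\lim_{n}\|T^n\|^{1/n}=r_\sigma(T)$, we get
\[
\limsup_{n\to\infty}\|M_\lambda(T^n)\|^{1/n}\le r_\sigma(T).
\]

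\emph{Lower bound.} Apply the left-hand inequality of (\ref{10}) to $T^n$, with $r$ read as the spectral radius. This gives
\[
\|M_\lambda(T^n)\|\ge 2\sqrt{\lambda-\lambda^2}\; r_\sigma(T^n)=2\sqrt{\lambda-\lambda^2}\; r_\sigma(T)^n,
\]
where the last equality is the spectral mapping theorem. Because $\lambda\in(0,1)$, the constant $c=2\sqrt{\lambda-\lambda^2}$ is strictly positive, so $c^{1/n}\to1$. Hence
\[
\liminf_{n\to\infty}\|M_\lambda(T^n)\|^{1/n}\ge r_\sigma(T).
\]
If $r_\sigma(T)=0$, this bound is trivial and the upper bound alone already gives the limit $0$. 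Combining the two bounds shows that the limit exists and equals $r_\sigma(T)$.

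\emph{Main obstacle.} The only delicate point is justifying the left inequality of (\ref{10}) with the \emph{spectral} radius, since the argument depends on it. If that needs to be checked from scratch, I would derive it from the first inequality of (\ref{9}) applied to $S=T^n$, namely $\|M_\lambda(S)\|\ge 2\sqrt{\lambda-\lambda^2}\,\|\widetilde{S}\|$. Then I would use $\|\widetilde{S}\|\ge r_\sigma(\widetilde{S})$ together with the standard fact $r_\sigma(\widetilde{S})=r_\sigma(S)$, which holds because the Aluthge transform preserves the spectrum. Note also that the restriction $\lambda\in(0,1)$ is exactly what keeps $c>0$. At $\lambda=0$ the statement is still true, but by a separate reason, $\|(T^n)^D\|\ge r_\sigma(T^n)$, and the present argument does not cover it.
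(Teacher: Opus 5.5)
Your proposal is correct and follows the paper's proof: apply (\ref{10}) to $T^n$, use $r_\sigma(T^n)=r_\sigma(T)^n$, take $n$-th roots, and squeeze with Gelfand's formula. Two points make your write-up more careful than the paper's: the $\liminf$/$\limsup$ phrasing proves that the limit exists rather than assuming it, and you justify that the $r$ in (\ref{10}) is the spectral radius via $\|\widetilde{S}\|\ge r_\sigma(\widetilde{S})=r_\sigma(S)$.
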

\begin{proof}
From the inequality (\ref{10}), and by using the basic result for $n \in \mathbb{N},$ $r(T^n) = (r(T))^n$, we obtain
\[
2\sqrt{\lambda - \lambda^2}\,r_{\sigma}(T^n)\le \|M_\lambda(T^n)\| \le \|T^n\|
\]
\[
\Rightarrow 2^{\frac{1}{n}}(\lambda - \lambda^2)^{\frac{1}{2n}}r_{\sigma}(T) \le \|M_\lambda(T^n)\|^{\frac{1}{n}} \le \|T^n\|^{\frac{1}{n}}
\]

Taking $n \to \infty$, and by applying Gelfand's Spectral Radius Formula, i.e.,
$r_{\sigma}(T) = \lim\limits_{k \to \infty} \|T^k\|^{\frac{1}{k}}.$ we get
\[
r_{\sigma}(T) \le \lim_{n \to \infty} \|M_\lambda(T^n)\|^{\frac{1}{n}} \le r_{\sigma}(T)
\Rightarrow \lim_{n \to \infty} \|M_\lambda(T^n)\|^{\frac{1}{n}} = r_{\sigma}(T).
\]

\end{proof}

We now state some basic features of $r_{\omega}$ applied to off-diagonal operator matrices under the transformation $M_\lambda$.

\begin{theorem}
Let $X,Y \in \mathcal{B}(\mathcal{H})$. Then
\begin{enumerate}
    \item[(a)] $r_{\omega} \left( M_{\lambda}\left(\begin{bmatrix} O & X \\ Y & O \end{bmatrix} \right)\right) = r_{\omega} \left( M_{\lambda}\left( \begin{bmatrix} O & e^{-i\gamma} X \\ e^{i\gamma} Y & O \end{bmatrix}\right) \right)= r_{\omega} \left( M_{\lambda}\left( \begin{bmatrix} O &  X \\ e^{i\gamma} Y & O \end{bmatrix}\right) \right)$ for all $\gamma \in \mathbb{R}$.
    
    \item[(b)] $r_{\omega} \left(  M_{\lambda}\left(\begin{bmatrix} O & X \\ Y & O \end{bmatrix} \right)\right) = r_{\omega}\left(  M_{\lambda}\left(\begin{bmatrix} O & Y \\ X & O \end{bmatrix}\right) \right)$.
\end{enumerate}
\end{theorem}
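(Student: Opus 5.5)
We will reduce every equality to unitary invariance of $M_\lambda$ and of $r_{\omega}$. Part (2) of Lemma \ref{Lemma11} is stated as $M_\lambda(USU^*)=M_\lambda(S)$, but its proof actually gives $M_\lambda(WSW^*)=WM_\lambda(S)W^*$ for every unitary $W$, and this covariance is what we use. Combined with $r_{\omega}(WZW^*)=r_{\omega}(Z)$, as in part (4) of that lemma, it follows that $r_{\omega}(M_\lambda(\cdot))$ takes the same value at unitarily equivalent operators. So in each case it suffices to show that the two block matrices are unitarily equivalent, or to handle directly the cases where they are not.

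For (a), first equality, take $W=\begin{bmatrix} I & 0\\ 0 & e^{i\gamma}I\end{bmatrix}$. A direct multiplication gives
$$W\begin{bmatrix} O & X\\ Y & O\end{bmatrix}W^*=\begin{bmatrix} O & e^{-i\gamma}X\\ e^{i\gamma}Y & O\end{bmatrix},$$
so the first equality follows.

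For the second equality in (a), note that $\begin{bmatrix} O & X\\ e^{i\gamma}Y & O\end{bmatrix}$ is obtained from $\begin{bmatrix} O & e^{-i\gamma/2}X\\ e^{i\gamma/2}Y & O\end{bmatrix}$ by multiplying by the scalar $e^{i\gamma/2}$. That middle matrix is unitarily equivalent to the original one by the first step with $\gamma/2$. We then use part (1) of Lemma \ref{Lemma11}, $M_\lambda(cS)=cM_\lambda(S)$, together with $r_{\omega}(cZ)=|c|\,r_{\omega}(Z)$. Since $|e^{i\gamma/2}|=1$, the value is unchanged.

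For (b), use the flip unitary $J=\begin{bmatrix} O & I\\ I & O\end{bmatrix}$, which satisfies $J=J^*=J^{-1}$. We have
$$J\begin{bmatrix} O & X\\ Y & O\end{bmatrix}J^*=\begin{bmatrix} O & Y\\ X & O\end{bmatrix},$$
and covariance finishes the argument.

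The main obstacle is to justify the covariance $M_\lambda(WSW^*)=WM_\lambda(S)W^*$ rigorously for general, non-invertible $S$. This requires that $WVW^*$ is the canonical partial isometry of $WSW^*$, i.e. that the kernel conditions are preserved. That holds because $\ker(WVW^*)=W\ker V=W\ker|S|=\ker|WSW^*|$. The scalar case requires the analogous check that $e^{i\theta}U$ is the canonical partial isometry of $cS$, and that the case $c=0$ is trivial.
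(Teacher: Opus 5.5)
Your proposal is correct and follows essentially the same route as the paper: the diagonal unitary $\mathrm{diag}(I,e^{\pm i\gamma}I)$ for the first equality of (a), the half-angle rescaling combined with $M_\lambda(cS)=cM_\lambda(S)$ and $r_{\omega}(cZ)=|c|\,r_{\omega}(Z)$ for the second, and the flip unitary for (b), each passed through the covariance $M_\lambda(WSW^*)=WM_\lambda(S)W^*$. Your remark that Lemma \ref{Lemma11}(2) is misstated is accurate, since both its proof and the paper's use of it here give covariance rather than invariance, and your kernel check for the canonical partial isometry supplies a detail the paper leaves implicit.
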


\begin{proof} 

(a) Let $U = \begin{bmatrix} I & O \\ O & e^{-i\gamma} I \end{bmatrix}, \gamma\in \mathbb{R}$, where $I$ is the identity operator in $\mathcal{B}(\mathcal{H})$. Then $U$ is a unitary operator on $\mathcal{H} \oplus \mathcal{H}$ and
\[
U^* \begin{bmatrix} O & X \\ Y & O \end{bmatrix} U = \begin{bmatrix} I & O \\ O & e^{-i\gamma} I \end{bmatrix}^* \begin{bmatrix} O & X \\ Y & O \end{bmatrix} \begin{bmatrix} I & O \\ O & e^{-i\gamma} I \end{bmatrix} = \begin{bmatrix} O & e^{-i\gamma} X \\ e^{i\gamma} Y & O \end{bmatrix}.
\]

By Lemma \ref{Lemma11} (b), we have 
\[
M_{\lambda}\left(\begin{bmatrix} O & e^{-i\gamma} X \\ e^{i\gamma} Y & O \end{bmatrix}\right)  =M_{\lambda}\left(U^* \begin{bmatrix} O & X \\ Y & O \end{bmatrix} U\right)=U^*M_{\lambda}\left(\begin{bmatrix} O &  X \\ Y & O \end{bmatrix}\right)U.
\]
Since $r_{\omega} (\cdot)$ is a weakly unitarily invariant norm, the proof of the first equality of  (a) is complete. 

If we consider $U = \begin{bmatrix} I & O \\ O & e^\frac{-i\gamma}{2} I \end{bmatrix}, \gamma\in \mathbb{R}$, then
\[r_{\omega} \left( M_{\lambda}\left(\begin{bmatrix} O & X \\ Y & O \end{bmatrix} \right)\right) = r_{\omega} \left( M_{\lambda}\left( \begin{bmatrix} O & e^\frac{-i\gamma}{2} X \\ e^\frac{i\gamma}{2} Y & O \end{bmatrix}\right) \right).\]

By Lemma \ref{Lemma11} (a), we have $M_{\lambda}\left( \begin{bmatrix} O &  X \\ e^{i\gamma} Y & O \end{bmatrix}\right)=e^\frac{i\gamma}{2}\left( M_{\lambda}\left( \begin{bmatrix} O & e^\frac{-i\gamma}{2} X \\ e^\frac{i\gamma}{2} Y & O \end{bmatrix}\right) \right).$

Since $r_{\omega}(cT)= |c|r_{\omega}(T)$ for anuy $c\in\mathbb{R},$ the proof of the second equality of  (a) is complete. 

The proof of (b) is similar to (a)  with $U = \begin{bmatrix} O & I \\ I & O \end{bmatrix}$. This completes the proof.

\end{proof}

\textbf{Funding} The first author and second author express gratitude to the University Grants Commission (UGC), Government of India, for providing financial assistance in the form of senior research fellowship.

\textbf{Data availability} Not applicable.

\textbf{Declarations}

\textbf{Conflict of interest } There are no conflicting interests, according to the authors.

\bibliography{sn-bibliography}

@article{bhatia1993more,
  title={More matrix forms of the arithmetic-geometric mean inequality},
  author={Bhatia, Rajendra and Davis, Chandler},
  journal={SIAM Journal on Matrix Analysis and Applications},
  volume={14},
  number={1},
  pages={132--136},
  year={1993},
  publisher={SIAM}
}

@article{kittaneh1988notes,
  title={Notes on some inequalities for {H}ilbert space operators},
  author={Kittaneh, Fuad},
  journal={Publications of the Research Institute for Mathematical Sciences},
  volume={24},
  number={2},
  pages={283--293},
  year={1988},
  publisher={Research Institute forMathematical Sciences}
}

@article{bhunia2021furtherance,
  title={Furtherance of numerical radius inequalities of {H}ilbert space operators},
  author={Bhunia, Pintu and Paul, Kallol},
  journal={Archiv der Mathematik},
  volume={117},
  pages={537--546},
  year={2021},
  publisher={Springer}
}

@article{dragomir2008power,
  title={Power inequalities for the numerical radius of a product of two operators in {H}ilbert spaces},
  author={Dragomir, Sever S},
  journal={Research report collection},
  volume={11},
  number={4},
  year={2008},
  publisher={School of Communications and Informatics, Faculty of Engineering and Science~…}
}

@article{kittaneh2003numerical,
  title={A numerical radius inequality and an estimate for the numerical radius of the {F}robenius companion matrix},
  author={Kittaneh, Fuad},
  journal={Studia Mathematica},
  volume={158},
  number={1},
  pages={11--17},
  year={2003}
}

@article{kittaneh2005numerical,
  title={Numerical radius inequalities for {H}ilbert space operators},
  author={Kittaneh, Fuad},
  journal={Studia Mathematica},
  volume={168},
  number={1},
  pages={73--80},
  year={2005}
}

@article{el2007numerical,
  title={Numerical radius inequalities for {H}ilbert space operators. II},
  author={El-Haddad, Mohammad and Kittaneh, Fuad},
  journal={Studia Mathematica},
  volume={2},
  number={182},
  pages={133--140},
  year={2007}
}

@article{[abu2015upper],
  title={Upper and lower bounds for the numerical radius with an application to involution operators},
  author={Abu-Omar, Amer and Kittaneh, Fuad},
  journal={The Rocky Mountain Journal of Mathematics},
  volume={45},
  number={4},
  pages={1055--1065},
  year={2015},
  publisher={JSTOR}
}

@article{sababheh2025inner,
  title={Inner product inequalities with applications},
  author={Sababheh, Mohammad and Moradi, Hamid Reza and Sahoo, Satyajit},
  journal={Linear and Multilinear Algebra},
  volume={73},
  number={9},
  pages={2089--2102},
  year={2025},
  publisher={Taylor \& Francis}
}

@article{zamani2021extension,
  title={On an extension of operator transforms},
  author={Zamani, Ali},
  journal={Journal of Mathematical Analysis and Applications},
  volume={493},
  number={2},
  pages={124546},
  year={2021},
  publisher={Elsevier}
}

@article{dragomir2017buzano,
  title={A Buzano type inequality for two Hermitian forms and applications},
  author={Dragomir, Sever S},
  journal={Linear and Multilinear Algebra},
  volume={65},
  number={3},
  pages={514--525},
  year={2017},
  publisher={Taylor \& Francis}
}

@article{buzano1974generalizzazione,
  title={Generalizzazione della diseguaglianza di Cauchy-Schwarz},
  author={Buzano, Maria Luisa},
  journal={Rend. Sem. Mat. Univ. e Politech. Torino},
  volume={31},
  number={1971},
  pages={73},
  year={1974}
}

@article{altwaijry2024new,
  title={New results on some transforms of operators in hilbert spaces},
  author={Altwaijry, Najla and Conde, Cristian and Feki, Kais and Stankovi{\'c}, Hranislav},
  journal={Bulletin of the Brazilian Mathematical Society, New Series},
  volume={55},
  number={3},
  pages={42},
  year={2024},
  publisher={Springer Berlin Heidelberg Berlin/Heidelberg}
}

@article{chabbabi2019mean,
  title={The mean transform and the mean limit of an operator},
  author={Chabbabi, Fadil and Curto, Ra{\'u}l and Mbekhta, Mostafa},
  journal={Proceedings of the American Mathematical Society},
  volume={147},
  number={3},
  pages={1119--1133},
  year={2019}
}

\end{document}